\newtheorem{theorem}{Theorem}[section]
\newtheorem{proposition}[theorem]{Proposition}
\newtheorem{lemma}[theorem]{Lemma}
\newtheorem{corollary}[theorem]{Corollary}
\theoremstyle{definition}
\newtheorem{definition}[theorem]{Definition}
\newtheorem{example}[theorem]{Example}
\newtheorem{remark}[theorem]{Remark}
\numberwithin{equation}{section}
\newtheorem{alphatheorem}{Theorem}
\newcommand{\g}{\mathfrak{g}}
\newcommand{\C}{\mathbb{C}}
\newcommand{\Z}{\mathbb{Z}}
\newcommand{\X}{\mathfrak{X}}
\newcommand{\hm}{\mathrm{Hom}}
\newcommand{\Ad}{{\mathrm{Ad}_\rho}}
\newcommand{\git}{/\!\!/}
\newcommand{\SL}{\mathrm{SL}}
\newcommand{\GL}{\mathrm{GL}}
\newcommand{\tr}{\mathrm{tr}}
\newcommand{\ti}[1]{t_{(#1)}}
\newcommand{\q}{\mathfrak{q}}
\newcommand{\lieg}{\mathfrak{g}}
\newcommand{\SU}{\mathrm{SU}}
\begin{document}
\baselineskip=15.5pt

\title[Poisson maps between character varieties]{Poisson maps between character varieties: gluing and capping}

\author[I. Biswas]{Indranil Biswas}

\address{School of Mathematics, Tata Institute of Fundamental
Research, Homi Bhabha Road, Mumbai 400005, India}

\email{indranil@math.tifr.res.in}

\author[J. Hurtubise]{Jacques Hurtubise}

\address{Department of Mathematics, McGill University, Burnside
Hall, 805 Sherbrooke St. W., Montreal, Que. H3A 2K6, Canada}

\email{jacques.hurtubise@mcgill.ca}

\author[L. C. Jeffrey]{Lisa C. Jeffrey}

\address{Department of Mathematics,
University of Toronto, Toronto, Ontario, Canada}

\email{jeffrey@math.toronto.edu}

\author[S. Lawton]{Sean Lawton}

\address{Department of Mathematical Sciences, George Mason University, 4400
University Drive, Fairfax, Virginia 22030, USA}

\email{slawton3@gmu.edu}

\subjclass[2020]{Primary 14M35, 53D30; Secondary 14L24}


\keywords{character variety, Poisson structure, gluing, capping}

\date{\today}

\begin{abstract}
Let $G$ be a compact Lie group or a complex reductive affine algebraic group.  We explore induced mappings between $G$-character varieties of surface groups by mappings between corresponding surfaces. It is shown that these mappings are generally Poisson. We also given an effective algorithm to compute the Poisson bi-vectors when $G=\SL(2,\C)$.  We demonstrate this algorithm by explicitly calculating the Poisson bi-vector for the 5-holed sphere, the first example for an Euler characteristic $-3$ surface.
\end{abstract}

\maketitle

\section{Introduction}\label{se1}
Suppose that $\Sigma_{n,g}$ is a surface of genus $g$ with $n$ boundary circles (or $n$ punctures), and $G$ is either a complex reductive affine algebraic group or compact Lie group.  The moduli space of representations of the fundamental group $\pi_1(\Sigma_{n,g})$ into $G$, the {\it $G$-character variety of $\pi_1(\Sigma_{n,g})$}, has a natural Poisson structure. This structure was given by Goldman \cite{Go} in the closed case, and extended in \cite{L3} to the case of surfaces with boundary. The representation space also has an equivalent interpretation as a space of flat connections, and from this point of view one can define the Poisson structure with an approach pioneered by Atiyah 
and Bott \cite{AB} when the surface is compact, and extended by Jeffrey in \cite{J} to punctured surfaces (see also \cite{BJ}). In this paper we consider the effect of mappings between surfaces. Given an appropriate mapping between two surfaces $f\,:\,\Sigma_1\,\longrightarrow \,\Sigma_2$, there is a natural morphism between their character varieties:
\[
\Phi\, :\, \X_{\Sigma_2}(G) \,\longrightarrow\, \X_{\Sigma_1}(G) \, .
\]
We will see that this is a Poisson map, both from the point of view of representations (Theorem \ref{thm:poisson}) and from the point of view of flat connections (Theorem \ref{thm1}).  Precisely, we prove:

\begin{alphatheorem}\label{thmA}Let $G$ be either a compact Lie group, or a complex reductive affine algebraic group. Let $q\,:\,\Sigma_1\,\longrightarrow\, \Sigma_2$ be a continuous map between compact orientable surfaces that preserves transversality of based loops, and double points. Then the induced map $$\q^*\, :\X_{\Sigma_2}(G)\, \longrightarrow\, \X_{\Sigma_1}(G)$$ is Poisson whenever $q$ preserves orientation, and is anti-Poisson if $q$ reverses orientation.
\end{alphatheorem}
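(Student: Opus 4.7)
The plan is to reduce the statement to a local calculation on trace-function generators of the coordinate ring, using Goldman's intersection-theoretic description of the bracket.

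First, I would reduce the Poisson identity to pairs of generators. Trace functions $t_\alpha$ associated to free homotopy classes of loops $\alpha$ (indexed over finite-dimensional representations of $G$) generate a dense subalgebra of functions on $\X_\Sigma(G)$. Under the pullback, one has $\q^* t_\alpha = t_{q_*\alpha}$, where $q_*:\pi_1(\Sigma_1)\to\pi_1(\Sigma_2)$ is the induced map on fundamental groups. Since the Poisson bracket is a continuous biderivation, it then suffices to verify
$$\{t_{q_*\alpha},\,t_{q_*\beta}\}_{\Sigma_2}(\rho) \;=\; \pm\,\{t_\alpha,\,t_\beta\}_{\Sigma_1}(\rho\circ q_*)$$
for all loops $\alpha,\beta$ on $\Sigma_1$ in transverse position and all $\rho\in \X_{\Sigma_2}(G)$, with sign according to whether $q$ preserves or reverses orientation.

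Next, I would expand both sides using Goldman's formula: for transverse loops $\alpha,\beta$ on a surface $\Sigma$,
$$\{t_\alpha,\,t_\beta\}(\rho)\;=\; \sum_{p\in\alpha\cap\beta}\epsilon(p)\,L_p(\rho),$$
where $\epsilon(p)\in\{\pm 1\}$ is the local oriented intersection sign, and $L_p(\rho)$ is a local term built from the $\mathrm{Ad}$-invariant pairing on $\g$ applied to $\rho$ restricted to the based loops $\alpha_p,\beta_p$ obtained by splitting $\alpha,\beta$ at $p$. The key observation is that $L$ is unchanged under push-forward along $q$: because $q_*\alpha_p$ and $q_*\beta_p$ are exactly the based loops at $q(p)$ obtained by splitting $q_*\alpha$ and $q_*\beta$ at $q(p)$, one has $L_{q(p)}(\rho)=L_p(\rho\circ q_*)$, so the representation-theoretic content of the two summands matches term by term.

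The core geometric input is the hypothesis that $q$ preserves transversality of based loops and preserves double points: this produces a bijection $\alpha\cap\beta \longleftrightarrow q\alpha\cap q\beta$ induced by $q$, and matches signs $\epsilon(q(p))=\pm\epsilon(p)$ according to whether $q$ preserves or reverses orientation. Summing over intersection points yields the desired identity, proving Theorem A. The main obstacle I anticipate is precisely this geometric step of establishing the intersection bijection when $q$ fails to be a local diffeomorphism (for instance, a covering, a branched cover, or a pinching map); the double-point hypothesis is exactly what prevents intersections from being collapsed or spontaneously created. One can then carry out the argument either in the representation-theoretic model (Theorem \ref{thm:poisson}) or in the flat-connection model (Theorem \ref{thm1}) referenced above, and the two approaches give equivalent proofs.
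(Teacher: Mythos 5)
Your proposal is correct and follows essentially the same route as the paper: reduce to the generators $f_\alpha$ of the coordinate ring, expand both brackets via Goldman's intersection formula \eqref{eq:poisson}, and use the hypotheses on transversality and double points to match intersection points and local variation terms one-to-one, with the sign governed by whether $q$ preserves or reverses orientation (this is the content of Theorem \ref{thm:poisson} and Corollary \ref{cor:poisson}; the flat-connection alternative you mention is the paper's Theorem \ref{thm1}).
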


Next, we review past computations of Poisson bi-vectors on character varieties and show that the bi-vector determines the underlying surface (Theorem \ref{thm:bivec}). In particular, we prove:

\begin{alphatheorem}
There is a homeomorphism between compact, connected orientable surfaces $\Sigma_{n_1,g_1}\,\cong\, \Sigma_{n_2,g_2}$ if and only if there is an equivalence of Poisson varieties $\X_{n_1,g_1}(G)\,\cong\, \X_{n_2,g_2}(G)$.
\end{alphatheorem}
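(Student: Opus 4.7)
The theorem reduces to combining Theorem \ref{thmA} with the forthcoming Theorem \ref{thm:bivec} and the topological classification of compact orientable surfaces, so the plan is to treat the two directions separately.

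For the ``only if'' direction, let $h\colon \Sigma_{n_1,g_1}\to \Sigma_{n_2,g_2}$ be a homeomorphism. After possibly pre-composing with an orientation-reversing self-homeomorphism of one side (available for any compact orientable surface), we may assume $h$ is orientation-preserving. Both $h$ and $h^{-1}$ are homeomorphisms, so they trivially preserve transversality and double points of based loops. By Theorem \ref{thmA} their pullbacks are Poisson morphisms on character varieties; by functoriality they are mutually inverse, assembling into a Poisson equivalence $\X_{n_1,g_1}(G)\cong \X_{n_2,g_2}(G)$.

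For the ``if'' direction, compact connected orientable surfaces are classified up to homeomorphism by the pair $(n,g)$ of boundary components and genus, so it suffices to recover $(n,g)$ from the Poisson variety $\X_{n,g}(G)$. This is precisely the content of Theorem \ref{thm:bivec}. The extraction is effected by two Poisson-invariant numerical data of $\X_{n,g}(G)$: the total dimension, which in the range $2g-2+n>0$ equals $(2g-2+n)\dim G$, and the codimension of a generic symplectic leaf (equivalently, the dimension of the Casimir subalgebra), equal to $n\cdot \mathrm{rank}(G)$. Any Poisson equivalence preserves both quantities (and anti-Poisson equivalences do too, since negating the bi-vector leaves the leaf structure and rank untouched), and the resulting linear system determines $(n,g)$ uniquely.

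The main obstacle is hidden in Theorem \ref{thm:bivec}: verifying the two dimension formulas, chiefly by describing the Casimirs as pullbacks of conjugation-invariant functions along the boundary-monodromy map, so that their transcendence degree is precisely $n\cdot\mathrm{rank}(G)$. A secondary point is handling the few exceptional surfaces with $2g-2+n\leq 0$ (sphere, disk, annulus, torus), whose character varieties may degenerate; these finitely many cases can be distinguished individually from the structure of $\X_{n,g}(G)$, so they pose no essential obstruction to the recovery argument.
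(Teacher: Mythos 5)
Your proposal is correct and follows essentially the same route as the paper: the forward direction is the functoriality of the Goldman bracket under (orientation-preserving) homeomorphisms, and the converse recovers $n$ from the Casimir subalgebra (equivalently the codimension $n(r+s)$ of a generic symplectic leaf) and then $g$ from the dimension of $\X_{n,g}(G)$ via Lemma \ref{lem:dim}, exactly as in the paper's three-case analysis of Theorem \ref{thm:bivec}. The only quibbles are cosmetic: your dimension formula omits the correction term $\zeta_{n,g}$ from Lemma \ref{lem:dim} (relevant when $G$ has positive-dimensional center, though harmless since the open/closed dichotomy is detected first by the Casimirs), and the low-complexity exceptional cases you defer are in the paper already excluded or absorbed by Proposition \ref{prop:euler}.
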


We then extend (Theorem \ref{thm:5holed}) the known examples of explicitly computed bi-vectors on character varieties by determining the case of the 5-holed sphere $\Sigma_{5,0}$ and $G=\SL(2,\C)$.

\begin{alphatheorem}\label{thmC}
For any $n,g\geq 0$, there is an effective algorithm to compute the Poisson bi-vector of $\X_{n,g}(\SL(2,\C))$.  The bi-vector of $\X_{5,0}(\SL(2,\C))$ is:
 \begin{eqnarray*}
&&\mathfrak{a}_{5,0}(\SL(2,\C))\,=\,\mathfrak{a}_{1324}\frac{\partial}{\partial t_{\{1,3\}}}
\wedge \frac{\partial}{\partial t_{\{2,4\}}}+\Sigma_1\left(\mathfrak{a}_{1214}
\frac{\partial}{\partial t_{\{1,2\}}}\wedge\frac{\partial}{\partial t_{\{1,4\}}}\right)\\
&+&\Sigma_2\left(\mathfrak{a}_{12314}
\frac{\partial}{\partial t_{\{1,2,3\}}}\wedge \frac{\partial}{\partial t_{\{1,4\}}}+
\mathfrak{a}_{12324}\frac{\partial}{\partial t_{\{1,2,3\}}}\wedge
\frac{\partial}{\partial t_{\{2,4\}}}+\mathfrak{a}_{12334}
\frac{\partial}{\partial t_{\{1,2,3\}}}\wedge \frac{\partial}{\partial t_{\{3,4\}}}\right)\\
&+&\Sigma_2\left(\mathfrak{a}_{123124}\frac{\partial}{\partial t_{\{1,2,3\}}}\wedge
\frac{\partial}{\partial t_{\{1,2,4\}}}\right)+\Sigma_3\left(\mathfrak{a}_{123134}
\frac{\partial}{\partial t_{\{1,2,3\}}}\wedge \frac{\partial}{\partial t_{\{1,3,4\}}}\right),
\end{eqnarray*}
where $\Sigma_i$ are symmetry operators defined by the mapping class group of the surface and $\mathfrak{a}_x$ are explicit polynomials $($see Section \ref{5holed}$)$. 
\end{alphatheorem}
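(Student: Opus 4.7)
The plan is to prove both parts of the theorem together: first construct the general algorithm, then apply it to $\Sigma_{5,0}$. When $n \geq 1$ the group $\pi_1(\Sigma_{n,g})$ is free of rank $r = 2g+n-1$, and the closed case $n=0$ reduces to the bounded case by capping off disks and invoking Theorem~A. By the classical Fricke--Klein--Procesi--Magnus theorem, $\C[\X_{n,g}(\SL(2,\C))]$ is generated by the trace functions $t_I := \tr\rho(x_{i_1}\cdots x_{i_k})$ for ordered subsets $I = \{i_1<\cdots<i_k\}\subseteq\{1,\ldots,r\}$ with $k\leq 3$; moreover, every trace of a word in the $x_i$ reduces to a polynomial in the $t_I$ via the identities $\tr(AB)+\tr(AB^{-1})=\tr(A)\tr(B)$ and the Cayley--Hamilton identity in $\SL(2,\C)$.

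The algorithm combines this reduction with Goldman's bracket formula \cite{Go}: for loops $\alpha,\beta$ in general position on $\Sigma_{n,g}$,
\[
\{t_\alpha, t_\beta\} \;=\; \frac{1}{2}\sum_{p\in\alpha\cap\beta}\varepsilon(p)\bigl(t_{\alpha_p\beta_p}-t_{\alpha_p\beta_p^{-1}}\bigr),
\]
where $\varepsilon(p)\in\{\pm 1\}$ is the intersection sign and $\alpha_p\beta_p$ is the concatenation at $p$. The three steps are: (i) realise each $t_I$ as the trace of an immersed loop on $\Sigma_{n,g}$; (ii) for each pair $(I,J)$, enumerate intersections and apply Goldman's formula to obtain a finite sum of traces of concatenated words; (iii) reduce each such trace to a polynomial in the generators via the Fricke--Klein identities. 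The bi-vector is then assembled as
\[
\mathfrak{a}_{n,g}(\SL(2,\C)) \;=\; \sum_{I<J}\{t_I,t_J\}\,\frac{\partial}{\partial t_I}\wedge\frac{\partial}{\partial t_J},
\]
viewed as a Poisson tensor on the affine embedding of $\X_{n,g}(\SL(2,\C))$ determined by the chosen generators. Termination is clear, and correctness follows from Goldman's theorem.

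For $\Sigma_{5,0}$, take $\pi_1=F_4$ with free generators $x_1,\ldots,x_4$ realised as loops around four of the five boundary circles, the fifth being $(x_1x_2x_3x_4)^{-1}$. There are $\binom{4}{1}+\binom{4}{2}+\binom{4}{3}=14$ generators $t_I$, giving a priori $\binom{14}{2}=91$ brackets. The crucial reduction is symmetry: permutations of the boundary circles lie in the pure mapping class group and, by Theorem~A, induce Poisson automorphisms of $\X_{5,0}(\SL(2,\C))$. The operators $\Sigma_1,\Sigma_2,\Sigma_3$ in the stated formula denote sums over these symmetry orbits of the listed monomials. The pair-types with representatives $\{1,3\}\wedge\{2,4\}$, $\{1,2\}\wedge\{1,4\}$, $\{1,2,3\}\wedge\{1,4\}$ (and its variants with $\{2,4\}$ and $\{3,4\}$), $\{1,2,3\}\wedge\{1,2,4\}$, and $\{1,2,3\}\wedge\{1,3,4\}$ exhaust all inequivalent pair-types; running (i)--(iii) on one representative of each yields the polynomials $\mathfrak{a}_\bullet$ tabulated in the theorem.

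The main obstacle is step (iii): Goldman's formula applied to words in $F_4$ of length at most $3$ produces traces of concatenated words of length up to $6$, whose reduction to the $14$ generators via iterated Fricke--Klein identities produces long polynomial expressions best handled by computer algebra. A further subtlety is that the $14$ trace generators are not algebraically independent on the $9$-dimensional $\X_{5,0}(\SL(2,\C))$, so the coefficients $\mathfrak{a}_\bullet$ are determined only modulo the ideal of syzygies; fixing a canonical reduction order in (iii) removes the ambiguity. Once the algorithm is executed on the five orbit representatives above and the results are summed via the $\Sigma_i$, the explicit formula in the theorem emerges.
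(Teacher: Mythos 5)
Your outline coincides with the paper's: the same $\SL(2,\C)$ specialization of Goldman's bracket (your $\tfrac12\sum\varepsilon(p)(t_{\alpha_p\beta_p}-t_{\alpha_p\beta_p^{-1}})$ is identical to the paper's Equation (2.3) via $\tr(AB)+\tr(AB^{-1})=\tr(A)\tr(B)$), the same $14$ trace generators, the same reduction to $45$ nontrivial pairings, and the same reliance on an effective trace--reduction algorithm for correctness and termination of the general algorithm. However, there are two genuine gaps. First, your central labor-saving step --- computing only one representative per ``symmetry orbit'' and transporting the answer by Theorem A --- is not justified as stated. A mapping class permuting the boundary circles acts on $\pi_1(\Sigma_{5,0})\cong F_4$ by a braid-type automorphism (e.g.\ $c_3\mapsto c_3c_4c_3^{-1},\ c_4\mapsto c_3$), \emph{not} by the naive relabeling $c_i\mapsto c_{\sigma(i)}$; so Theorem A gives a Poisson automorphism of $\X_{5,0}(\SL(2,\C))$ that does not simply permute the indices of the $t_I$, and equivariance of the bracket table under index permutation is not a formal consequence. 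Indeed the paper proceeds in the opposite order: it computes all $45$ brackets (drawing a fresh diagram whenever the topological configuration of the two curves genuinely changes, and relabeling indices only when the two configurations are carried to one another by an explicit homeomorphism of the model), and only \emph{a posteriori} observes the symmetry encoded by $\Sigma_1,\Sigma_2,\Sigma_3$, verifying it with \emph{Mathematica} and even showing it is sharp. Your claimed orbit list is also not something you can write down in advance: for instance the six triple--triple pairs split into a $\Sigma_2$-orbit of size four and a $\Sigma_3$-orbit of size two whose polynomials are genuinely different, and the claimed equivariance for the $\Sigma_3$-orbit holds only modulo the ideal of relations among the $t_I$ --- precisely the syzygy ambiguity you flag yourself, which undercuts the shortcut.

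Second, the explicit polynomials $\mathfrak{a}_{1324},\mathfrak{a}_{1214},\dots,\mathfrak{a}_{123134}$ \emph{are} the content of the second assertion of the theorem, and your proposal never produces them: ``the explicit formula in the theorem emerges'' after running the algorithm is a promise, not a proof. To close the argument you would need to either carry out the diagrammatic intersection analysis and trace reduction for all $45$ pairs (as the paper does, with $10$ displayed figures plus index-swap arguments for configurations related by an actual surface homeomorphism), or at minimum do it for a set of representatives together with a rigorous identification of the mapping classes relating the remaining pairs to those representatives and of their induced action on the trace coordinates. The first half of your write-up (effectiveness of the general algorithm for arbitrary $n,g$) is fine and matches the paper, modulo replacing your capping detour for $n=0$ by the simpler observation that the trace-reduction algorithm applies to any finitely presented group.
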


\begin{remark}
We give only one new example in Theorem \ref{thmC} since all cases of less complexity exist in the literature (references in Subsection \ref{subsec:examples}) and this is the first (and most tractable) example of a bi-vector for an Euler characteristic -3 surface.  Other examples are possible to compute by following the general algorithm we describe, however such examples are significantly more onerous to determine in detail.
\end{remark}

In Sections \ref{se2} and \ref{se3}, we provide an analytic point of view on Theorem \ref{thmA}.  In Section \ref{capping} we give simple examples, in which we ``cap'' some of the boundary circles of a surface $\Sigma_1$ (with a disk, a cylinder, a genus one surface, etc.) to obtain a surface $\Sigma_2$, and see what we obtain with $\Phi$. By capping all the boundaries, we obtain symplectic character varieties mapping to Poisson character varieties. In Section \ref{symquot}, we discuss gluing maps via symplectic quotients. The final section (Section \ref{5holed}) is devoted to the computer aided proof of Theorem \ref{thm:5holed}.

\subsection*{Acknowledgments}
Biswas is partially supported by  a J. C. Bose Fellowship.  Hurtubise and Jeffrey are each partially supported by an NSERC Discovery grant.  Lawton is partially supported by a Collaboration grant from the Simons Foundation, and thanks IHES for hosting him in 2021 when this work was completed.  We thank the referees for helpful suggestions.

\section{Poisson Structure on Character Varieties: Betti Point-of-View}

\subsection{Reductive Groups}

Let $G$ be a connected reductive affine algebraic group over $\C$. By the central isogeny theorem, $G\,\cong\, 
DG\times_F T$, where $DG\,=\,[G,\,G]$ is the derived subgroup, $T\,\cong\, (\C^*)^s$ is the maximal central torus, 
and $F\,=\,T\cap DG$. The group $G$ acts on itself by conjugation and the
geometric invariant theoretic quotient $G\git G$ is isomorphic to 
$\mathbf{T}/ W$, where $\mathbf{T}\, \subset\, G$ is a maximal torus and $W$ is the Weyl group 
$N_G(\mathbf{T})/\mathbf{T}$. By potentially enlarging $F$, we can assume $DG$ is simply connected. With that 
assumption made, by results of Steinberg \cite{St65}, we can say more: $$G\git G\,\cong\, \mathbf{T}\git W\,\cong\,\C^r\times_F 
(\C^*)^s,$$ where $r$ is the rank of $DG$.
Therefore, the coordinate ring $\C[G\git G]$ is isomorphic to $$\C[t_1,\,\dots ,\,t_r,\,d_1,\,\dots,
\,d_{s+1}]^F/(d_1\cdots d_{s+1}-1).$$ 
We denote points in $G\git G$ by $(\tau_1,\,\dots ,\,\tau_r,\,\delta_1,\,\dots ,\,\delta_s)$.
 
 \begin{example}
If $G$ is $\SL(n,\C)$, then $G\git G\,\cong\, \C^{n-1}$. Therefore, if $G\,=\,\GL(n,\C)$, then $G\git G\,\cong\,
\C^{n-1}\times_{\mathbb{Z}/n\mathbb{Z}}\C^*$, where $\mathbb{Z}/n\mathbb{Z}$ arises as the group of scalar matrices
of determinant 1. The coordinates describing points are the coefficients of the characteristic polynomial
which can take any value freely, except the determinant which can take any value except $0$.
\end{example}

\subsection{Character Varieties of Surfaces}

Let $\Sigma_{n,g}$ be a compact connected orientable surface of genus $g\,\geq\, 0$ with $n\,\geq\,0$
boundary components; we assume that $n\,\geq\, 2$ if $g\,=\,0$ since otherwise the surface is simply-connected
and the moduli spaces we will consider are trivial. Pick a base point $*$ in the interior of $\Sigma_{n,g}$.
The fundamental group of $\Sigma_{n,g}$ admits the presentation:
$$\pi_1(\Sigma_{n,g},\,*)\,\cong\, \langle a_1,\,b_1,\,\dots,\,a_g,\,b_g,\,c_1,\,\dots,\,c_n\ \Big\vert\ 
\prod_{i=1}^g[a_i,b_i]\prod_{j=1}^nc_j\,=\,1\rangle,$$ where $[x,\,y]\,=\,xyx^{-1}y^{-1}$ is the commutator.

The set of homomorphisms $\hm(\pi_1(\Sigma_{n,g},\,*),\,G)$ is naturally an affine algebraic subvariety of 
$G^{2g+n}$ by evaluating a homomorphisms at generators. The group $G$ acts rationally on 
$\hm(\pi_1(\Sigma_{n,g},\,*),\,G)$ by conjugation, that is, $g\cdot \rho\,=\,g\rho g^{-1}$. The geometric invariant 
theoretic quotient of this action is denoted
$$\X_{n,g}(G)\,:=\,\hm(\pi_1(\Sigma_{n,g},\,*),\,G)\git G,$$
and is called the {\it $G$-character variety of $\Sigma_{n,g}$}. More generally, if $(\Sigma,\,x_0)$ is any 
(pointed) compact orientable surface we will denote the $G$-character variety of $\pi_1(\Sigma,\,x_0)$ by 
$\X_{\Sigma,x_0}(G)$.

Note that the conjugation action of the center $Z(G)$ of $G$ is trivial and thus it suffices to consider the conjugation 
action of $PG\,:=\,G/Z(G)$; making it an effective action. The following lemma overlaps with \cite[Proposition 
49]{SikCharVar}.

\begin{lemma}\label{lem:dim}Assume $G$ is non-abelian.
The $($complex$)$ dimension of $\X_{n,g}(G)$ is $$-\chi(\Sigma_{n,g})\dim G +\zeta_{n,g},$$ where
\begin{itemize}
\item $\zeta_{n,g}\, =\,s$ if $n\,>\,0$ and $2g+n\,\geq \,3$,

\item $\zeta_{n,g}\,=\,r+s$ if $n\,>\,0$ and $2g+n\,=\,2$,

\item $\zeta_{n,g}\,=\,2s$ if $n\,=\,0$ and $g\,\geq\, 2$, and

\item $\zeta_{n,g}\,=\,2(r+s)$ if $n\,=\,0$ and $g\,=\,1$.
\end{itemize}
\end{lemma}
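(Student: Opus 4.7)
The plan is to separate into four cases by $(n,g)$, in each case computing $\dim\hm(\pi_1(\Sigma_{n,g},*),G)$ directly from the given presentation and then subtracting the dimension of a generic orbit of the effective conjugation action of $PG:=G/Z(G)$.  Throughout I use that $\dim Z(G)=s$ and that an \emph{irreducible} representation $\rho$ (one whose image lies in no proper parabolic subgroup of $G$) has $G$-stabilizer exactly $Z(G)$, hence trivial $PG$-stabilizer; wherever irreducible $\rho$ form a dense open subset, the GIT quotient has dimension $\dim\hm-\dim PG=\dim\hm-\dim G+s$.

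The case $n>0$ is the more elementary one.  The surface relation solves for $c_n$ in terms of the remaining generators, so $\hm(\pi_1(\Sigma_{n,g},*),G)\cong G^{2g+n-1}$ as a variety, of dimension $(2g+n-1)\dim G$.  When $2g+n\ge 3$ the rank $2g+n-1\ge 2$ of the underlying free group is enough to guarantee a dense open subvariety of irreducible representations (exhibit one such tuple explicitly and then invoke openness of irreducibility), whence
\[
\dim\X_{n,g}(G)\,=\,(2g+n-1)\dim G-(\dim G-s)\,=\,(2g+n-2)\dim G+s\,=\,-\chi(\Sigma_{n,g})\dim G+s.
\]
The remaining subcase $n>0$, $2g+n=2$ forces $(g,n)=(0,2)$ (a cylinder), and then $\pi_1\cong\Z$, so $\X_{0,2}(G)=G\git G\cong\mathbf{T}/W$ has dimension $r+s$, matching the claim since $-\chi(\Sigma_{0,2})=0$.

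For closed surfaces ($n=0$) I would use the standard cohomological approach.  At a smooth point $[\rho]$ corresponding to an irreducible representation, deformation theory identifies $T_{[\rho]}\X_{0,g}(G)\cong H^1(\pi_1(\Sigma_{0,g}),\g_\rho)\cong H^1(\Sigma_{0,g},\mathrm{ad}\,\rho)$.  Using
\[
\dim H^0-\dim H^1+\dim H^2\,=\,\chi(\Sigma_{0,g})\dim G,
\]
together with $H^0=\g^\rho=\mathrm{Lie}(Z(G))$ of dimension $s$ for $\rho$ irreducible, and Poincar\'e duality $\dim H^2=\dim H^0=s$ (valid because $\g$ carries a non-degenerate invariant bilinear form when $G$ is reductive), one concludes
\[
\dim\X_{0,g}(G)\,=\,\dim H^1\,=\,2s-\chi(\Sigma_{0,g})\dim G\,=\,(2g-2)\dim G+2s\qquad(g\ge 2).
\]
For $g=1$, $\pi_1\cong\Z^2$ and the top-dimensional component of the variety of commuting pairs in $G$ consists of pairs simultaneously conjugate into a maximal torus $\mathbf{T}$, whose Weyl quotient is $(\mathbf{T}\times\mathbf{T})/W$ of dimension $2(r+s)$.

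The main obstacle is the closed-surface case $g\ge 2$: it requires checking that the locus of irreducible representations is non-empty and dense in the representation variety, and that at such points $\X_{0,g}(G)$ is smooth of the expected cohomological dimension, so that the pointwise tangent-space count determines the global dimension.  Existence of irreducibles for $g\ge 2$ can be handled by constructing explicit examples (for instance by lifting a Fuchsian representation into a principal $\SL_2\subset G$), and density plus smoothness at irreducibles is a classical result going back to Goldman; once these are invoked, the Euler-characteristic and Poincar\'e-duality bookkeeping is routine.
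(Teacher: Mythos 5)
Your proposal is correct, and in three of the four cases it coincides with the paper's argument: for $n>0$ with $2g+n\ge 3$ both you and the authors use that $\pi_1$ is free of rank $2g+n-1\ge 2$, that irreducible (equivalently, Zariski-dense-image) tuples form a dense open set with stabilizer of dimension $s$, and subtract $\dim PG$; for $(n,g)=(2,0)$ both reduce to $G\git G\cong \mathbf{T}/W$; for $(n,g)=(0,1)$ both quote $\X^0_{0,1}(G)\cong\mathbf{T}^2/W$. The genuine divergence is the closed case $g\ge 2$. The paper argues globally on the representation variety: the commutator-product map $G^{2g}\to DG$ cutting out $\hm(\pi_1(\Sigma_{0,g}),G)$ is dominant, hence a generic submersion, so $\dim\hm=2g\dim G-\dim DG$, and then one subtracts the generic orbit dimension $\dim PG$. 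You instead work infinitesimally at a smooth irreducible point, identifying the tangent space with $H^1(\pi_1,\g_\rho)$ and combining the Euler-characteristic identity $\dim H^0-\dim H^1+\dim H^2=\chi(\Sigma_{0,g})\dim G$ with $\dim H^0=\dim H^2=s$ via Poincar\'e duality. Both yield $(2g-2)\dim G+2s$. Your route buys a uniform, presentation-independent computation that extends verbatim to relative and parabolic settings, at the cost of invoking more machinery (Goldman's smoothness at irreducibles, duality for local systems, and an existence argument for irreducibles such as the principal $\SL_2$ lift); the paper's route is more elementary, needing only the dominance of the commutator map and a generic stabilizer count. One small caution: the $G$-stabilizer of an irreducible representation is in general only a finite extension of $Z(G)$ rather than $Z(G)$ itself, but this does not affect any dimension count.
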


\begin{proof}
Recall our standing assumption that $n\,\geq\, 2$ if $g\,=\,0$, which rules out only the 2-sphere and the disk (both 
simply-connected).

If $n\,>\,0$, then $\Sigma_{n,g}$ deformation retracts to a 1-complex whose fundamental group is free of rank 
$2g+n-1$. In this case $\chi(\Sigma_{n,g})\,=\,1-(2g+n-1)\,=\,2-2g-n$ and so the rank of $\pi_1(\Sigma_{n,g},\,*)$ is 
$1-\chi(\Sigma_{n,g})$. If $n\,=\,0$, then the Euler characteristic is $1-2g+1\,=\,2-2g$. Either way, 
$\chi(\Sigma_{n,g})\,=\,2-2g-n$.

When $n\,>\,0$ then $\pi_1(\Sigma_{n,g},\,*)$ is a free group of rank $2g+n-1$ which is greater than or equal to 2 if 
and only if $g\,\geq\, 1$ or $n\,\geq\, 3$. Thus, $\pi_1(\Sigma_{n,g},\,*)$ surjects onto a rank 2 free group
$F_2$, and 
so $\hm(F_2,\, G)$ injects into $\hm(\pi_1(\Sigma_{n,g},\,*),\, G)$. The generic dimension of a $PG$-conjugation 
stabilizer of $\hm(F_2,\,G)$ is 0 since a generic pair of elements in $G$ generates a Zariski dense subgroup; hence 
the same is true for $\hm(\pi_1(\Sigma_{n,g},\,*),\, G)$.

Thus, since $\hm(\pi_1(\Sigma_{n,g},\,*),\, G)\,\cong\, G^{2g+n-1}$ the dimension of $\X_{n,g}(G)$ is
$(2g+n-1)\dim G-\dim PG\,=\,(2g+n-1)\dim G-\dim G +\dim Z(G)\,=\,-\chi(\Sigma_{n,g})\dim(G)+s$.

Now, still assuming $n\,>\,0$, if $\pi_1(\Sigma_{n,g},\,*)$ is a free group of rank 1 (only occurring when $g\,=\,0$ and 
$n\,=\,2$), then the character variety is isomorphic to $G\git G$ which we have already seen is of dimension $r+s$ 
which is also the dimension of a generic $G$-conjugation stabilizer.

Likewise, if $n\,=\,0$ and $g\,\geq \,2$ then $\pi_1(\Sigma_{n,g},\,*)$ surjects onto a free group of rank 2. 
Thus, we have that generic dimension of a stabilizer is the dimension of $Z(G)$, which is $s$. Moreover, the 
commutator map $G^{2g}\,\longrightarrow\, DG$ (defining the relation in $\pi_1(\Sigma_{0,g},\,*)$) is dominant 
(and hence a generic submersion), and therefore $\dim \hm(\pi_1(\Sigma_{0,g},\,*),G)\,=\,2g\dim G -\dim DG$ which 
then implies $\dim\X_{0,g}(G) \,=\,2g\dim G -\dim DG -\dim PG\, =\,-\chi(\Sigma_{0,g})\dim G +2\dim 
Z(G)\,=\,-\chi(\Sigma_{0,g})\dim G +2s$.

Lastly, when $n\,=\,0$ and $g\,=\,1$, then the identity component of $\X_{0,1}(G)$ satisfies $\X^0_{0,1}(G)\,\cong
\,\mathbf{T}^2/W$ which has dimension $2(r+s)$, see \cite{FlLa4}. We note that if $G$ is simply connected then 
$\X_{0,1}(G)$ is connected but otherwise it has smaller dimensional components. Regardless, in this case 
$\chi(\Sigma_{0,1})\,=\,0$, and so we have established the formula.
\end{proof}

\begin{remark}
If $G$ is abelian then the conjugation action is trivial, and so \begin{eqnarray*}\X_{n,g}(G)&=&\hm(\pi_1(\Sigma_{n,g},\,*),\, G)\\
&=&\hm(\pi_1(\Sigma_{n,g},\,*)/[\pi_1(\Sigma_{n,g},\,*),
\,\pi_1(\Sigma_{n,g},\,*)],\,G)\\&=&\hm(\Z^{\epsilon_{n,g}},\,G)\,=\,G^{\epsilon_{n,g}},\end{eqnarray*} where
$\epsilon_{0,g}\,=\,2g$ and $\epsilon_{n,g}\,=\,2g+n-1$ if $n\,>\,0$.
The dimension in these cases is obvious. In the cases when $g\,=\,0$ and $n\,=\,0$ or $1$, then the character variety is a point and so has dimension 0.
\end{remark}

The algebraic structure of $\X_{n,g}(G)$, up to biregular mappings, does not depend on the presentation of the 
fundamental group of $\Sigma_{n,g}$. In fact, it only depends on the Euler characteristic $\chi(\Sigma_{n,g})$.

\begin{proposition}\label{prop:euler}
There is a biregular morphism $\X_{n_1,g_1}(G)\,\cong\, \X_{n_1,g_2}(G)$ if and only if both $n_1,\, n_2$ are either
positive or 0, and $\chi(\Sigma_{n_1,g_1})\,=\,\chi(\Sigma_{n_2,g_2})$.
\end{proposition}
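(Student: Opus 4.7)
The character variety $\X_{n,g}(G)$ depends on $\Sigma_{n,g}$ only through $\pi_1(\Sigma_{n,g})$ up to isomorphism, which in turn depends only on the homotopy type: for $n>0$, $\Sigma_{n,g}$ deformation-retracts onto a wedge of $2g+n-1=1-\chi(\Sigma_{n,g})$ circles, while for $n=0$ the closed surface is determined up to homeomorphism by $g$. The forward direction follows at once: if both $n_i>0$ and the Euler characteristics agree, both fundamental groups are free of the same rank and the character varieties are canonically biregular; if both $n_i=0$ and the Euler characteristics agree, then $g_1=g_2$ and the surfaces are homeomorphic.

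For the reverse direction, I would first compare dimensions via Lemma \ref{lem:dim}, working under the standing hypothesis that $G$ is non-abelian (so $r,s<\dim G$; the abelian case is handled separately by the remark following the lemma). If both $n_i>0$, the generic-case dimension $(2g_i+n_i-2)\dim G + s$ forces $2g_1+n_1=2g_2+n_2$, with the sporadic subcase $(n,g)=(2,0)$ of dimension $r+s$ isolated from the rest by $r<\dim G$; if both $n_i=0$, the formula $(2g_i-2)\dim G+2s$ forces $g_1=g_2$, with the subcase $g=1$ of dimension $2(r+s)$ similarly isolated. In each ``pure'' case the Euler characteristics therefore coincide.

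The delicate step is the mixed case $n_1>0$, $n_2=0$. Equating the two dimension formulas yields $(2g_1+n_1-2g_2)\dim G = s$, which for $s>0$ has no admissible solution: $s<\dim G$ forces $2g_1+n_1\leq 2g_2$, while $s>0$ forces the left side to be positive. Hence when $G$ has a non-trivial central torus the mixed case is already ruled out on dimensions alone. The remaining subcase is $G$ semisimple ($s=0$) with $2g_1+n_1=2g_2$, where the dimensions genuinely coincide (for example $\X_{4,0}(\SL(2,\C))$ and $\X_{0,2}(\SL(2,\C))$ are both six-dimensional); this is the main obstacle. To close the gap I would invoke a finer invariant reflecting the structural difference between a free fundamental group and a surface-group commutator relation. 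A clean option is the E-polynomial or mixed Hodge polynomial, which has been computed in the literature both for free-group character varieties and for closed surface-group character varieties, and which disagrees in the overlapping dimensions. Alternatively, one can exploit the fact that for $n>0$ the character variety is a GIT quotient of the smooth variety $G^{2g+n-1}$ by $G$, whereas for $n=0$ the pre-quotient $\hm(\pi_1(\Sigma_{0,g}),G)$ is a complete intersection of codimension $\dim DG$ in $G^{2g}$ cut out by the commutator relation, and promote this structural asymmetry to a biregular invariant of the quotient such as the stratification of the singular locus or an arithmetic point count over finite fields.
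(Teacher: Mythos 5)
Your dimension analysis is correct as far as it goes, and the observation that a nontrivial central torus ($s>0$) already kills the mixed case on dimension grounds alone is a genuine point that the paper does not make. But the proposal has a real gap exactly where you flag ``the main obstacle'': for semisimple $G$ with $2g_1+n_1=2g_2$ (e.g.\ $\X_{4,0}(\SL(2,\C))$ versus $\X_{0,2}(\SL(2,\C))$, both of dimension $6$) you never actually prove the two varieties are non-isomorphic. You list candidate invariants --- E-polynomials, singular-locus stratifications, point counts over finite fields --- but do not compute or cite any of them, and the first option is not available at the stated level of generality: E-polynomials of closed-surface character varieties are known only for a handful of low-rank groups, whereas Proposition \ref{prop:euler} is asserted for every complex reductive $G$. ``Promote this structural asymmetry to a biregular invariant'' is a wish, not an argument. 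Since the mixed semisimple case is precisely the content of the ``both positive or both zero'' hypothesis, the hardest implication of the proposition is left unproved.

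The paper closes this gap with a homotopy-theoretic input that works uniformly in $G$: for $n>0$ the variety $\X_{n,g}(G)$ strong deformation retracts onto $\X_{n,g}(K)$ for $K$ a maximal compact subgroup of $G$, while for a closed surface of genus $g_2\ge 2$ the variety $\X_{0,g_2}(G)$ is not even homotopy equivalent to $\X_{0,g_2}(K)$ \cite{FlLa1, BiFl, FlLa5}; hence an open-surface character variety cannot be biregularly isomorphic to a closed-surface one in that range, and the one remaining closed case $g_2=1$ is disposed of by the dimension count $r+s\neq 2(r+s)$ from Lemma \ref{lem:dim}. The rest of your argument (the direction from the two conditions to the isomorphism via free groups of equal rank, and the dimension bookkeeping in the pure open and pure closed cases, including the isolation of the sporadic $(2,0)$ and $g=1$ subcases) agrees with the paper. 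To make your route work you would have to either carry out one of your proposed computations for arbitrary reductive $G$ or replace it with a citation of comparable strength, such as the topological results the paper invokes.
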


\begin{proof}
Assume first that $\X_{n_1,g_1}(G)\,\cong\, \X_{n_2,g_2}(G)$. Then their dimensions are equal. Suppose $n_1\,>\,0$ and 
$n_2\,=\,0$. Then $\X_{n_1,g_1}(G)$ strong deformation retracts onto $\X_{n_1,g_1}(K)$, where $K$ is a maximal compact 
subgroup of $G$, while $\X_{0,g_2}(G)$ is not even homotopy equivalent to $\X_{0,g_2}(K)$ as long as $g_2\,\geq\, 2$ 
by \cite{FlLa1, BiFl, FlLa5}. Thus, if $\X_{n_1,g_1}(G)\,\cong\, \X_{n_1,g_2}(G)$, then both surfaces 
are open or both are closed. From Lemma \ref{lem:dim} we conclude that 
$\chi(\Sigma_{n_1,g_1})\,=\,\chi(\Sigma_{n_2,g_2}).$ Note that this deduction holds {\it without} the assumptions on 
$n_1$ and $n_2$.

Conversely, assume that $\chi(\Sigma_{n_1,g_1})\,=\,\chi(\Sigma_{n_2,g_2}).$ There are three cases to
consider: (1) $n_1n_2\,>\,0$, (2) $n_1\,=\,0\,=\,n_2$, and (3) $n_1n_2\,=\,0$ but $n_1+n_2\,\not=\,0$.

In Case (1), both the fundamental groups $\pi_1(\Sigma_{n_1,g_1},\,*)$ and $\pi_1(\Sigma_{n_2,g_2},\,*)$
are free of the same rank, and hence the character varieties are isomorphic (note 
that the surfaces need not be homeomorphic, but they will be homotopic).

In Case (2), the Euler characteristics being equal implies the surfaces are homeomorphic and hence their fundamental
groups are isomorphic. Hence the character varieties too are isomorphic.

Case (3) does occur since the Euler characteristics can be equal (with one surface open and the other closed)
since $2-2g_1-n_1\,=\,2-2g_2$ requires only that $n_1\,=\,2(g_1-g_2)$. In this situation (without loss of generality
assume $n_1\,>\,0$ and $n_2\,=\,0$), as noted in the first paragraph of this proof, $\X_{n_1,g_1}(G)$ is homotopic
to $\X_{n_1,g_1}(K)$, while $\X_{0,g_2}(G)$ is not homotopic to $\X_{0,g_2}(K)$ for $g_2\,\geq\, 2$. So the
converse (without additional assumptions on $n_1$ and $n_2$) does not hold.

To exhaust the possibilities with Case (3), suppose $g_2\,=\,1$, then $2-2g_1-n_1\,=\,0$ and hence $2g_1+n_1\,=\,2$
and so we have that $\X_{n_1,g_1}(G)\,\cong\, G\git G$ of dimension $s+t$ while $\X_{0,g_2}(G)$ has dimension
$2(s+t)$, and so they are not isomorphic. In short, every time Case (3) arises we have simultaneously
that the Euler characteristics are equal yet the character varieties are not isomorphic.
\end{proof}

\begin{remark}
Proposition \ref{prop:euler} is non-trivial in the sense that in general if two character varieties are isomorphic
it does not imply the underlying groups are isomorphic. As a simple example of this observe that
$$\hm(\Z/2\Z,\,\SL(2,\C))\git \SL(2,\C)\,\cong\,\hm(\Z/3\Z,\,\SL(2,\C))\git \SL(2,\C)$$ as each is a set of two
points.
\end{remark}

\subsection{Relative Character Varieties}

When $n\,>\,0$, for every $1\, \leq\, i\, \leq\, n$ define the boundary map
$$\mathfrak{b}_i\,:\,\X_{n,g}(G)\,\longrightarrow\, G\git G$$ by sending a representation class $[\rho]$ to $[\rho_{|_{c_i}}]$. Subsequently, we define $$\mathfrak{b}_{n,g}\,:=\,(\mathfrak{b}_1,\,\dots,\,\mathfrak{b}_n)
\,:\,\X_{n,g}(G)\,\longrightarrow\, (G\git G)^{ n}.$$ We emphasize that the map $\mathfrak{b}_{n,g}$ depends on the surface, not only its fundamental group.

Let $\tau\,\in\, \mathfrak{b}_{n,g}\left(\X_{n,g}(G)\right)\,\subset\, (G\git G)^n$ be a point in the image of the 
boundary map and define $\mathfrak{L}_\tau\,=\,\mathfrak{b}_{n,g}^{-1}(\tau).$ The singular locus of $\X_{n,g}(G)$ is 
a proper closed sub-variety; denote its complement by $\mathcal{X}_{n,g}(G)$. So $\mathcal{X}_{n,g}(G)$ is a complex 
manifold that is dense in $\X_{n,g}(G)$. Since $\mathfrak{b}_{n,g}$ is dominant, its regular values are generic.
Thus, at such a point, $\mathcal{L}_\tau\,:=\,\mathfrak{L}_\tau\cap\mathcal{X}_{n,g}(G)$ is a submanifold of dimension 
$$\chi(\Sigma_{n,g})\dim G +\zeta_{n,g}-n(r+s).$$

It is shown in \cite{L3} that $\cup_\tau\mathcal{L}_\tau$ foliate $\mathcal{X}_{n,g}(G)$ by complex symplectic 
submanifolds, making $\mathcal{X}_{n,g}(G)$ a complex Poisson manifold. This structure continuously extends over 
all of $\X_{n,g}(G)$ making it a Poisson variety; a variety whose sheaf of regular functions is a sheaf of Poisson 
algebras (see \cite{BLR} for details).

We now review the explicit definition of this structure.

\subsection{Poisson Structure}

For an affine variety $V$ defined over $\C$, a Poisson structure on $V$ is a Lie bracket operation $\{\, ,\,\}$ on its coordinate ring $\C[V]$ that acts as a formal derivation (satisfies the Leibniz rule).

The smooth stratum of $V$, denoted $\mathcal{V}$, is a complex Poisson manifold in the usual sense by the Stone-Weierstrass Theorem. For any holomorphic function $f$ on $\mathcal{V}$, there is a Hamiltonian vector field $H_f\,:=\,\{f,\cdot \}$ on $\mathcal{V}$ defined in terms of the Poisson bracket. There exists an exterior bi-vector field $\mathfrak{a}\,\in\, H^0(\mathcal{V},\,\bigwedge^{2} T \mathcal{V})$ whose restriction to symplectic
leaves (with $(2,0)$-form $\omega$) is given by $\{f,\,g\}\,=\,\omega(H_g,\,H_f)$.
Let $f,\,g\,\in\, \C[V]$. Then with respect to interior multiplication $\{f,\,g\}
\,=\,\mathfrak{a}\cdot (df\wedge dg)\,=\, (df\wedge dg)(\mathfrak{a}).$ In local coordinates $(z_1,\, \dots,\,z_k)$ it takes the form 
$$\mathfrak{a}\,=\,\sum_{i,j}\mathfrak{a}_{i,j}\frac{\partial}{\partial z_i}\land 
\frac{\partial}{\partial z_j}$$ and so 
\begin{align*}
\{f,g\}&\,=\,\sum_{i,j}\left(\mathfrak{a}_{i,j}\frac{\partial}{\partial z_i}\land \frac{\partial}{\partial z_j}\right)\cdot\left(\frac{\partial f}{\partial z_i} dz_i \wedge \frac{\partial g}{\partial 
z_j}dz_j\right)\\
&\,=\,\sum_{i,j}\mathfrak{a}_{i,j}\left(\frac{\partial f}{\partial z_i}\frac{\partial g}{\partial z_j}-
\frac{\partial f}{\partial z_j}\frac{\partial g}{\partial z_i}\right).
\end{align*}

Any reductive $G$ has a symmetric, non-degenerate bilinear form $\mathfrak{B}$ on its Lie algebra $\mathfrak g$ that is invariant under the adjoint representation. Fix such an invariant form $\mathfrak{B}\,:\,\mathfrak{g}\times 
\mathfrak{g}\,\longrightarrow\, \C$. If $G$ is semisimple $\mathfrak{B}$ is a multiple of the Killing form. 

Returning to our varieties $\X_{n,g}(G)$, in \cite{GHJW} it is established that $\omega$, in the following  commutative diagram, defines a symplectic form on the leaf $\mathcal{L}_\tau$:
$$
\xymatrix{
H^1(\Sigma_{n,g},\,\partial \Sigma_{n,g};\,\g_\Ad) \times H^1(\Sigma_{n,g};\,\g_\Ad) \ar[r]^-\cup &
H^2(\Sigma_{n,g},\,\partial \Sigma_{n,g};\,\g_\Ad\otimes \g_\Ad) \ar[d]^{\mathfrak{B}_*}\\
& H^2(\Sigma_{n,g},\,\partial \Sigma_{n,g};\,\C) \ar[d]^{\cap [Z]}\\
H^1_{\mathrm{par}}(\Sigma_{n,g};\,\g_\Ad)\times H^1_{\mathrm{par}}(\Sigma_{n,g};\,\g_\Ad)\ar[uu] \ar[r]^-\omega &
H_0(\Sigma_{n,g};\,\C)\,\cong\,\C. }
$$
Note that $H^1_{\mathrm{par}}(\Sigma_{n,g};\,\g_\Ad)$ is a model for the tangent space at a class $[\rho]$ in $\mathcal{L}_\tau$.  

With respect to this 2-form, in \cite{L3}, it is shown that Goldman's proof \cite{Go,G86} of the Poisson bracket in the closed surface case generalizes directly to relative and parabolic cohomology and establishes a Poisson bracket on the coordinate ring $\C[\X_{n,g}(G)]$.

Let $\alpha,\, \beta \,\in\, \pi_1(\Sigma_{n,g},\,*)$. Up to homotopy, we can always arrange for $\alpha$ and $\beta$ to intersect at worst in transverse double points. Let $\alpha\cap\beta$ be the set of (transverse) double point intersections of $\alpha$ and $\beta$. Let $\epsilon(p,\,\alpha,\, \beta)$ be the oriented intersection number at $p\,\in \,\alpha\cap\beta$ and let $\alpha_p\,\in\, \pi_1(\Sigma,\,p)$ be the curve $\alpha$ based at $p$.

For a given $f\,\in \,\C[G\git G]$ we obtain $f_\alpha\,:\,\X_{n,g}(G)\,\longrightarrow\, \C$ defined by $f_\alpha([\rho])\,=\,f(\rho(\alpha))$. Define the variation $F$ of an invariant function $f$ by $$\mathfrak{B}(F(\mathbf{A}),\,X)\,=\,\frac{d}{dt}\Big|_{t=0}f(\mathrm{exp}(tX)\mathbf{A}).$$
In special cases, $F$ can be computed explicitly; see \cite{G86} for further details.
In these terms the bracket is defined on $\mathbb{C}[\mathfrak{X}_{n,g}(G)]$ by:
\begin{align}\label{eq:poisson}
\{f_\alpha([\rho]),\,g_\beta([\rho])\}&\,=\,\sum_{p\in
\alpha\cap\beta}\epsilon(p,\,\alpha,\,\beta)\mathfrak{B}(F_{\alpha_p}([\rho]),\, G_{\beta_p}([\rho])).
\end{align}
See \cite[Sections 3 and 4]{L3} for further details when $n\,>\,0$, \cite{G86} when $n\,=\,0$ and $g\,\geq\, 2$, and \cite{SikAb} for $n\,=\,0$ and $g\,=\,1$. We will denote the bi-vector associated to this Poisson bracket on $\X_{n,g}(G)$ 
by $\mathfrak{a}_{n,g}(G)$. 

Note that when $\alpha$ represents one of the boundary curves in $\Sigma_{n,g}$, it can be chosen to not intersect any of the other generators of the fundamental group. Consequently, Formula \eqref{eq:poisson} implies that $f_\alpha$ Poisson commutes with all other functions; such functions are called {\it Casimirs}.

\subsection{Bi-vectors on Character Varieties}\label{subsec:examples}

In contrast to Proposition \ref{prop:euler}, the Poisson bi-vector completely determines the isomorphism class of the underlying surface.

\begin{theorem}\label{thm:bivec}
There is a homeomorphism $\Sigma_{n_1,g_1}\,\cong\, \Sigma_{n_2,g_2}$ if and only if there is an equivalence of Poisson varieties $\X_{n_1,g_1}(G)\,\cong\, \X_{n_2,g_2}(G)$.
\end{theorem}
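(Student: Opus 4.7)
The plan is to handle the two directions separately. For the forward direction, a homeomorphism $h\colon \Sigma_{n_1,g_1}\to \Sigma_{n_2,g_2}$ preserves transversality of based loops and double points, so Theorem \ref{thmA} applies: the induced morphism $\X_{n_2,g_2}(G)\to \X_{n_1,g_1}(G)$ is Poisson or anti-Poisson, depending on whether $h$ preserves or reverses orientation. Since every compact orientable surface admits an orientation-reversing self-homeomorphism, composing with such a self-homeomorphism on either side yields a Poisson equivalence in both cases.

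For the converse, suppose $\Phi\colon \X_{n_1,g_1}(G)\to \X_{n_2,g_2}(G)$ is a Poisson equivalence. Forgetting the Poisson structure, Proposition \ref{prop:euler} already forces both surfaces to be simultaneously closed or simultaneously bounded, and to have equal Euler characteristic. The closed case is then immediate since the genus is determined by $\chi$. In the open case, equal Euler characteristic only yields $2g_1+n_1 = 2g_2+n_2$, so we must recover the value of $n$ itself from the Poisson data, after which the Euler-characteristic identity will determine $g$.

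The invariant to exploit is the transcendence degree of the Poisson center $Z(\C[\X_{n,g}(G)])$, which is preserved by every Poisson equivalence. By the symplectic-leaf description recalled above, the leaves are the generic fibers of the boundary map $\mathfrak{b}_{n,g}\colon \X_{n,g}(G)\to (G\git G)^n$, so the Poisson center coincides with the pullback of regular functions from the Zariski closure of the image of $\mathfrak{b}_{n,g}$; its transcendence degree equals $\dim\overline{\mathfrak{b}_{n,g}(\X_{n,g}(G))}$. In the stable range $2g+n\geq 3$, I would show that this image dimension is a strictly increasing function of $n$ for fixed $G$: the image is cut out from $(G\git G)^n$ only by the abelianization constraint $\prod_j c_j \in DG$, a condition of codimension $s$, giving image dimension $n(r+s)-s$. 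Since $r+s>0$ for non-abelian reductive $G$, this forces $n_1 = n_2$ and hence $g_1 = g_2$. The remaining edge case $2g+n=2$ with $n>0$ is the annulus $\Sigma_{2,0}$ alone, which is already resolved by dimension counting in Proposition \ref{prop:euler}.

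The main obstacle is the image-dimension computation for $\mathfrak{b}_{n,g}$ in the stable range: one must verify that no constraint beyond the abelianization condition cuts down the image. For $G$ semisimple this reduces to the standard surjectivity of the commutator map $G^{2g}\to G$ when $g\geq 1$ (or to an easy count when $g=0$ and $n\geq 3$), together with the freedom to adjust the remaining boundary classes via the free generators $a_i, b_i, c_2, \dots, c_n$. For general reductive $G$, the central-isogeny decomposition $G\cong DG\times_F T$ recalled earlier allows the same argument to be run on each factor, with the central factor contributing exactly the codimension-$s$ constraint.
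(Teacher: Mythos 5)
Your proof is correct and follows the same strategy as the paper's: the forward direction is functorial, and the converse reduces (via Proposition \ref{prop:euler}) to showing that the Casimir subalgebra detects the number of boundary components. Where the paper simply asserts that the Casimir subalgebras differ unless $n_1=n_2$, you supply the quantitative content --- the transcendence degree of the Poisson center equals $\dim\overline{\mathfrak{b}_{n,g}(\X_{n,g}(G))}=n(r+s)-s$ in the stable range --- and you also make explicit the orientation-reversing adjustment in the forward direction that the paper dismisses as obvious; both are welcome refinements rather than a different route.
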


\begin{proof}
The forward direction is obvious. We break the converse direction into three cases.

First, assume $n_1,\,n_2\,>\,0$. Then the Casimir subalgebra of $\C[\X_{n_1,g_1}(G)]$ differs from the Casimir subalgebra of $\C[\X_{n_1,g_1}(G)]$ unless $n_1\,=\,n_2$. In that case, the Euler characteristic, which is read off the dimension of $\X_{n,g}(G)$, determines the genus and so $g_1\,=\,g_2$ and we are done.

Second, assume $n_1\,=\,0$ but $n_2\,>\,0$ (which implies that the two surfaces are not isomorphic). In that case, $\X_{n_1,g_1}(G)$ is symplectic but $\X_{n_2,g_2}(G)$ is not (non-trivial Casimirs). Thus, their bi-vectors could not be equivalent either.

Lastly, assume that $n_1\,=\,0\,=\,n_2$. Then Proposition \ref{prop:euler} tells us that since the dimensions of $\X_{n_1,g_1}(G)$ and $\X_{n_2,g_2}(G)$ are the same the Euler 
characteristics of the surfaces are the same. But since each surface is closed, they are 
isomorphic.
\end{proof}

We next consider some examples. Since the character variety is a point when $(n,\,g)\,=\,(0,\,0)$ or $(1,\,0)$ these cases are trivial for any $G$. Likewise, for any $G$ the $(2,\,0)$ case has 0 dimensional symplectic leaves and so the bi-vector is trivial.

The next simplest example is the 3-holed sphere. For $G\,=\,\SL(2,\C)$,
we have $\X_{3,0}(G)\,\cong\, \C^3$ with coordinates $\tr_{c_1}, \tr_{c_2},$ and $\tr_{c_1c_2}$ (see \cite{ABL} for a proof). Since the boundary curves are 
disjoint, they have no intersections and thus the Poisson bracket is trivial. Alternatively, the symplectic leaves are the level sets obtained by fixing the three boundary invariants. But since each point in $\X_{3,0}(G)$ is uniquely determined by $\tr_{c_1}, \tr_{c_2},$ and $\tr_{c_1c_2}$, each symplectic leaf is a point.

When $G\,=\,\SL(3,\C)$ the bi-vector for $\Sigma_{3,0}$ was worked out in \cite{La06, L3}. Unlike the case of $\SL(2,\C)$ where the symplectic leaves are 0 dimensional, the symplectic leaves in $\X_{3,0}(\SL(3,\C))$ are 2 dimensional.

To describe it we need to briefly review the structure of $\hm(F_2,\,\SL(3,\C))\git \SL(3,\C)$ from \cite{La06,La07}. The $\SL(3,\C)$-character variety of a free group $F_2$ of rank 2 is a hypersurface in $\C^9$, which is a branched double cover of $\C^8$ under projection. The coordinate ring is generated by 9 trace functions of simple closed curves (in the 1-holed torus) denoted $\{\ti{\pm 1},\, \dots ,\,\ti{\pm 4},\ti{5}\}$, and satisfies a single relation of the form $\ti{5}^2-P\ti{5}+Q$ where $P,\,Q\,\in\, \C[\ti{\pm 1},\,\dots,\,\ti{\pm 4}]$. Let $\mathfrak{a}_{i,j}\,=\,\{\ti{i},\ti{j}\}$. In these terms, the Poisson bi-vector is:
$$\mathfrak{a}_{3,0}(G)\,=\,(P-2\ti{5})\frac{\partial}{\partial \ti{4}}\land 
\frac{\partial}{\partial \ti{-4}} +(1-\mathfrak{i})\left(\mathfrak{a}_{4,5}
\frac{\partial}{\partial \ti{4}}\land \frac{\partial}{\partial \ti{5}} \right),$$
where $\mathfrak{a}_{4,5}\,=\,\frac{\partial}{\partial \ti{-4}}(Q-\ti{5}P)$,
and $\mathfrak{i}\,\in\, \mathrm{Out}(F_2)$ is the outer automorphism of $F_2\,=\,
\langle a,\,b\rangle$ defined by $a\,\longmapsto \,a^{-1}$ and $b\,\longmapsto\, b^{-1}$.

The other surface with Euler $-1$ is the 1-holed torus. In this case, the 
bi-vector for $\SL(2,\C)$ is computed in \cite{G06}, and for $\SL(3,\C)$ it is computed in \cite{L3,L4}. Additionally, in \cite{G06} the bi-vector is computed for $\SL(2,\C)$ and the Euler characteristic $-2$ open surfaces: the 4-holed sphere and the 2-holed torus.  No other examples have been computed. 

In Section \ref{5holed} we add to the known examples by computing the bi-vector for the 5-holed sphere (one of three Euler characteristic $-3$ orientable surfaces). This computation uses the computational program in \cite{ABL} that allows one to compute the generators and relations of {\it any} $\SL(2,\C)$-character variety. There are 45 required computations and diagrams. In fact, the algorithm we describe and use is {\it effective}, that is,  the algorithm terminates after a finite number of steps that can in principle be done ``by hand," and always produces a correct answer if the steps are correctly followed.  Here is the theorem:

\begin{theorem}\label{thm:5holed}
For any $n,g\geq 0$, there is an effective algorithm to compute the Poisson bi-vector of $\X_{n,g}(\SL(2,\C))$
The bi-vector of $\X_{5,0}(\SL(2,\C))$ is:
 \begin{eqnarray*}
&&\mathfrak{a}_{5,0}(\SL(2,\C))\,=\,\mathfrak{a}_{1324}\frac{\partial}{\partial t_{\{1,3\}}}
\wedge \frac{\partial}{\partial t_{\{2,4\}}}+\Sigma_1\left(\mathfrak{a}_{1214}
\frac{\partial}{\partial t_{\{1,2\}}}\wedge\frac{\partial}{\partial t_{\{1,4\}}}\right)\\
&+&\Sigma_2\left(\mathfrak{a}_{12314}
\frac{\partial}{\partial t_{\{1,2,3\}}}\wedge \frac{\partial}{\partial t_{\{1,4\}}}+
\mathfrak{a}_{12324}\frac{\partial}{\partial t_{\{1,2,3\}}}\wedge
\frac{\partial}{\partial t_{\{2,4\}}}+\mathfrak{a}_{12334}
\frac{\partial}{\partial t_{\{1,2,3\}}}\wedge \frac{\partial}{\partial t_{\{3,4\}}}\right)\\
&+&\Sigma_2\left(\mathfrak{a}_{123124}\frac{\partial}{\partial t_{\{1,2,3\}}}\wedge
\frac{\partial}{\partial t_{\{1,2,4\}}}\right)+\Sigma_3\left(\mathfrak{a}_{123134}
\frac{\partial}{\partial t_{\{1,2,3\}}}\wedge \frac{\partial}{\partial t_{\{1,3,4\}}}\right),
\end{eqnarray*}
where $\Sigma_i$ are symmetry operators defined by the mapping class group of the surface and $\mathfrak{a}_x$ are explicit polynomials; both are described in detail in Section \ref{5holed}. Moreover, the polynomial coefficients do not exhibit any further mapping class group symmetry from boundary permutation.
\end{theorem}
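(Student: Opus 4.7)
The plan is to derive an effective algorithm by specializing Goldman's bracket formula \eqref{eq:poisson} to $\SL(2,\C)$ and combining it with the trace-identity and generators-and-relations framework of \cite{ABL}, then execute this algorithm for $\Sigma_{5,0}$.

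First I would take $\mathfrak{B}(X,Y)=\tr(XY)$ on $\mathfrak{sl}(2,\C)$, compute the variation of the trace character $f(A)=\tr(A)$ as $F(A)=A-\tfrac{1}{2}\tr(A) I$, and substitute into \eqref{eq:poisson}. Using the $\SL(2,\C)$ Cayley--Hamilton identity $A+A^{-1}=\tr(A) I$, this yields the classical formula
\begin{equation*}
\{t_\alpha,\, t_\beta\} \,=\, \tfrac{1}{2}\sum_{p\in \alpha\cap \beta}\epsilon(p,\alpha,\beta)\bigl(t_{\alpha_p \beta_p}\,-\,t_{\alpha_p \beta_p^{-1}}\bigr),
\end{equation*}
so every bracket of two trace generators is a signed combination of traces of concatenated based loops. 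Each resulting trace is then reduced back to a polynomial in the Fricke-type generators $t_S$ of $\C[\X_{n,g}(\SL(2,\C))]$ by repeatedly applying $\tr(AB)+\tr(AB^{-1})=\tr(A)\tr(B)$ together with the explicit presentation procedure of \cite{ABL}. Since the generating set is finite and each reduction is finite, the algorithm terminates; effectiveness follows because every individual step---choosing minimal-intersection loop representatives, enumerating intersection points and signs, applying the bracket formula, and performing Fricke reductions---is a finite combinatorial or polynomial manipulation that can in principle be carried out by hand.

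For the explicit computation at $(n,g)=(5,0)$, I would eliminate $c_5=(c_1c_2c_3c_4)^{-1}$ and identify $\X_{5,0}(\SL(2,\C))=\hm(F_4,\SL(2,\C))\git\SL(2,\C)$, whose coordinate ring is generated by the $4+6+4=14$ trace functions $t_S$ indexed by non-empty subsets $S\subset\{1,2,3,4\}$ of size at most three. By the Casimir observation following \eqref{eq:poisson}, the four singleton traces $t_{\{i\}}$ together with $t_{c_5}$ (expressible as a polynomial in the $t_S$ via Fricke) are Casimirs, leaving ten non-Casimir trace generators. The bi-vector is then determined by the skew-symmetric array of brackets among these ten generators computed via the algorithm above; the terms displayed in the statement are precisely the types that survive once mapping-class-group symmetry is imposed.

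The main obstacle is computational rather than conceptual: organizing roughly $45$ independent pairwise bracket computations and performing each one without arithmetic or loop-combinatorial error. I would handle this by exploiting the $S_4$-action on $\{c_1,c_2,c_3,c_4\}$ (induced by the spherical mapping class group permuting the first four boundary components) to partition the brackets into orbits; the symbols $\Sigma_i$ in the statement are precisely the sums over the resulting orbit-types. For each orbit representative I would draw the loop pair on a planar picture of the four-punctured disk, read off the transverse double points and their signs, apply the $\SL(2,\C)$ bracket formula above, and reduce the resulting traces via Fricke and \cite{ABL} to obtain the polynomial coefficient $\mathfrak{a}_x$. Finally, the closing assertion that the $\mathfrak{a}_x$ exhibit no further mapping-class-group symmetry from boundary permutation is verified by direct inspection of the resulting polynomials, showing that no additional relation coming from a transposition swapping $c_5$ with some $c_i$ holds identically.
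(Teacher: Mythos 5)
Your proposal follows essentially the same route as the paper: specialize Goldman's formula \eqref{eq:poisson} to $\SL(2,\C)$ (your form $\tfrac12(t_{\alpha_p\beta_p}-t_{\alpha_p\beta_p^{-1}})$ is equivalent, via $\tr(AB)+\tr(AB^{-1})=\tr(A)\tr(B)$, to the paper's $\tr(\alpha_p\beta_p)-\tfrac12\tr(\alpha)\tr(\beta)$ in \eqref{eq:poissonsl}), invoke the effective trace-reduction algorithm of \cite{ABL} for termination, reduce to the $45$ non-Casimir pairings among the $14$ Fricke generators of $\hm(F_4,\SL(2,\C))\git\SL(2,\C)$, organize them by boundary-permutation symmetry into the operators $\Sigma_i$, and check the sharpness of the symmetry by exhausting the induced permutation actions. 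This matches the paper's proof in Section \ref{5holed}, which carries out exactly these diagrammatic intersection computations (with \emph{Mathematica} verification) for representatives of each type of pairing.
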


We remark again that this is the first explicit example of the Poisson structure on a 
character variety of an Euler characteristic -3 surface.  

\subsection{Mappings Between Surfaces}

Let $\Sigma_1$ and $\Sigma_2$ be compact orientable surfaces (possibly with boundary), and 
$G$, as before, is a reductive affine algebraic group over $\C$. If $q\,:\,\Sigma_1\,
\longrightarrow\, \Sigma_2$ is a continuous map and $q(x)\,=\,y$, then there is an induced homomorphism $$q_\#\,:\,\pi_1(\Sigma_1,\,x)\,\longrightarrow\, \pi_1(\Sigma_2,\,y).$$ In turn, we have an induced {\it continuous} map $$q^*\,:\,\hm(\pi_1(\Sigma_2,y),\, G)\,\longrightarrow\,\hm(\pi_1(\Sigma_1,\,x),\,G)$$ given by $q^*(\rho)\,=\,\rho\circ q_\#$. This function is equivariant with respect to $G$-conjugation, and thus there is a morphism $$\mathfrak{q}^*\,:\,\hm(\pi_1(\Sigma_2,\,y),\,G)\git G\,\longrightarrow\,
\hm(\pi_1(\Sigma_1,\,x),\,G)\git G$$ given by $\mathfrak{q}^*([\rho])\,=\,[\rho\circ q_\#].$ Lastly, we have an algebra morphism between coordinate rings $$\mathfrak{q}_*\,:\,\C[\hm(\pi_1(\Sigma_1,\,x),\,G)]^G\,\longrightarrow\, 
\C[\hm(\pi_1(\Sigma_2,\,y),\,G)]^G$$ given by $\mathfrak{q}_*(f)(\rho)\,=\,f(\rho\circ q_\#).$

\begin{theorem}\label{thm:poisson}
Let $q\,:\,\Sigma_1\,\longrightarrow\, \Sigma_2$ be a continuous map between compact orientable
surfaces that preserves transversality of based loops, and double points.
Then the induced algebra morphism of coordinate rings $\mathfrak{q}_*\,:\,
\C[\X_{\Sigma_1,x}(G)]\,\longrightarrow\,\C[\X_{\Sigma_2,y}(G)]$ is a morphism of
Poisson algebras if $q$ preserves orientation and is an anti-Poisson morphism if $q$ reverses orientation.  Regardless, the image of $\mathfrak{q}_*$ is a Poisson subalgebra.
\end{theorem}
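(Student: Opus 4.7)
The plan is to verify the identity $\mathfrak{q}_*\{f,g\}_{\Sigma_1}=\pm\{\mathfrak{q}_*(f),\mathfrak{q}_*(g)\}_{\Sigma_2}$ for $f,g$ ranging over a set of algebra generators of $\C[\X_{\Sigma_1,x}(G)]$; the full statement then follows by the Leibniz rule (both sides are biderivations). A convenient generating set consists of the trace-type functions $f_\alpha$ for $f\in\C[G\git G]$ and $\alpha\in\pi_1(\Sigma_1,x)$, and with this choice one has the tautological identity $\mathfrak{q}_*(f_\alpha)=f_{q_\#(\alpha)}$ because $(\mathfrak{q}_*f_\alpha)(\rho)=f_\alpha(\rho\circ q_\#)=f(\rho(q_\#\alpha))$. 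So the task reduces to matching the two sides of Goldman's formula \eqref{eq:poisson} applied to $(\alpha,\beta)$ in $\Sigma_1$ and to $(q_\#\alpha,q_\#\beta)$ in $\Sigma_2$.

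Second, I would put $\alpha,\beta$ in general position in $\Sigma_1$ so that they meet transversally in double points. The hypothesis that $q$ preserves transversality of based loops and double points is exactly what guarantees (i) the push-forward loops $q\circ\alpha$ and $q\circ\beta$ are again transverse with only double-point intersections in $\Sigma_2$, and (ii) the map $p\longmapsto q(p)$ is a bijection $\alpha\cap\beta\longleftrightarrow q(\alpha)\cap q(\beta)$ with no phantom intersections appearing in $\Sigma_2$. With this bijection in hand, the summand-by-summand comparison reduces to two elementary checks. The variation term matches because
$$F_{\alpha_p}(\rho\circ q_\#)\,=\,F\bigl((\rho\circ q_\#)(\alpha_p)\bigr)\,=\,F\bigl(\rho(q(\alpha)_{q(p)})\bigr)\,=\,F_{q(\alpha)_{q(p)}}(\rho),$$
and similarly for $G_{\beta_p}$; the bilinear form $\mathfrak{B}$ is untouched by $q$. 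The intersection number transforms as $\epsilon(q(p),q\alpha,q\beta)=\mathrm{sgn}(q)\cdot\epsilon(p,\alpha,\beta)$ where $\mathrm{sgn}(q)=+1$ if $q$ preserves orientation and $-1$ otherwise, since the local degree of $q$ at each double point is $\pm1$ uniformly according to the global orientation behaviour. Summing over $p\in\alpha\cap\beta$ yields precisely $\pm\mathfrak{q}_*\{f_\alpha,g_\beta\}_{\Sigma_1}=\{\mathfrak{q}_*f_\alpha,\mathfrak{q}_*g_\beta\}_{\Sigma_2}$, with the sign determined by orientation.

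The closing assertion that the image of $\mathfrak{q}_*$ is a Poisson subalgebra is then immediate: given $\phi=\mathfrak{q}_*\tilde\phi$ and $\psi=\mathfrak{q}_*\tilde\psi$ in the image, one has $\{\phi,\psi\}_{\Sigma_2}=\pm\,\mathfrak{q}_*\{\tilde\phi,\tilde\psi\}_{\Sigma_1}$, which is again in the image regardless of the sign.

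I expect the main obstacle to be making precise the local degree argument for intersection signs, in particular handling degenerate situations such as $q$ collapsing a loop or identifying several intersection points; the hypothesis on $q$ is designed to exclude these, but one should verify that trivial limiting cases (for instance when $q_\#\alpha$ is null-homotopic, so that $\mathfrak{q}_*f_\alpha$ is constant) satisfy the identity vacuously. A secondary technical point is the basepoint change implicit in the symbol $\alpha_p$: this amounts to conjugating $\rho(\alpha)$ by $\rho$ of a path from $*$ to $p$, and because $f$ is $G$-invariant and $q_\#$ is a homomorphism, the ambiguity is absorbed on both sides and does not obstruct the matching.
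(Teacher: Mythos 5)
Your proposal is correct and follows essentially the same route as the paper's proof: reduce to the trace generators $f_\alpha$ via the derivation property, use the hypothesis on $q$ to identify intersection points and match the terms of Goldman's formula \eqref{eq:poisson}, and track the sign through the intersection numbers $\epsilon(p,\alpha,\beta)$ according to whether $q$ preserves or reverses orientation. Your treatment is in fact somewhat more careful than the paper's about the bijection of double points, the basepoint-change ambiguity in $\alpha_p$, and degenerate cases, but the underlying argument is the same.
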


\begin{proof}
The first part of the theorem implies the second, so we only prove that. Since 
$\mathfrak{q}_*$ is an algebra morphism and the bracket is a derivation, it is enough to 
verify the claim on all generators of the algebra.

Since $q$ preserves transversality of based loops, double points, and either preserves or 
reverses (globally) orientation, it follows that for any two based loops $\alpha$ and 
$\beta$ in $\Sigma_1$ used in computing the bi-vector $\mathfrak{a}_{\Sigma_1}(G)$ we have 
from Equation \eqref{eq:poisson}:
\begin{align}\label{qbracket}
&\q_*\left(\{f_\alpha([\rho]), \,g_\beta([\rho])\}\right)\\
= &\sum_{q(p)\in q(\alpha)\cap q(\beta)}\!\!\!\!\!\!\epsilon(q(p),\,q(\alpha),\,q(\beta))\mathfrak{B}(F_{q(\alpha)_{q(p)}}([q_\#(\rho)]), \,G_{q(\beta)_{q(p)}}([q_\#(\rho)]) \nonumber \\
=\, &\pm \{f_{q(\alpha)}([q_\#(\rho)]),\,g_{q(\beta)}([q_\#(\rho)])\} \nonumber \\
=\, &\pm \{\q_*(f_\alpha([\rho])),\,\q_*(g_\beta([\rho]))\}.\nonumber
\end{align}
However, the intersection numbers $\epsilon(q(p),\,q(\alpha),\,q(\beta))$ and
$\epsilon(p,\,\alpha,\,\beta)$ will be reversed if $q$ reverses orientation and will be preserved if $q$ preserves orientation.
\end{proof}

\begin{corollary}\label{cor:poisson}
The map between character varieties induced by $q$ in Theorem \ref{thm:poisson}
$$\q^*\, :\X_{\Sigma_2,y}(G)\, \longrightarrow\, \X_{\Sigma_1,x}(G)$$
is Poisson whenever $q$ preserves transversality of based loops, double points and orientation.
\end{corollary}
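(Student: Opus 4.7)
The plan is to recognize that this corollary is a formal consequence of Theorem \ref{thm:poisson} via the standard anti-equivalence between affine varieties and their coordinate rings of regular functions. By definition, a morphism $\phi\colon X\to Y$ of affine Poisson varieties is a Poisson morphism precisely when the induced pullback $\phi^*\colon \C[Y]\to \C[X]$ is a morphism of Poisson algebras. So my task reduces to matching the algebra morphism $\mathfrak{q}_*$ of Theorem \ref{thm:poisson} with the pullback along the geometric map $\q^*$.

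First I would unwind the definitions to verify this identification. For any $f\in \C[\X_{\Sigma_1,x}(G)]$ and any $[\rho]\in \X_{\Sigma_2,y}(G)$, one computes
$$\mathfrak{q}_*(f)([\rho])\,=\,f(\rho\circ q_\#)\,=\,f(\q^*([\rho])),$$
so $\mathfrak{q}_*$ is exactly the pullback of regular functions along $\q^*$. (The fact that the pullback lands in $G$-invariants, and is well-defined on the GIT quotient, has already been recorded just before Theorem \ref{thm:poisson}.) With this identification in hand, the orientation-preserving case of Theorem \ref{thm:poisson} directly yields that the pullback morphism $\mathfrak{q}_*=(\q^*)^*$ respects the Poisson brackets, and hence that $\q^*$ is a Poisson map of affine Poisson varieties.

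The one point deserving a sentence of care, rather than being a genuine obstacle, is the compatibility between two descriptions of the Poisson structure on $\X_{\Sigma,x}(G)$: the geometric one given by the symplectic foliation on the smooth stratum $\mathcal{X}_{\Sigma,x}(G)$, and the algebraic one on $\C[\X_{\Sigma,x}(G)]$ used in Equation \eqref{eq:poisson} and in Theorem \ref{thm:poisson}. As recalled in the paragraph following the foliation statement, and in \cite{BLR}, these descriptions agree: the foliation induces a bracket on regular functions of the smooth locus, which extends continuously across the singular locus to the Poisson algebra structure on $\C[\X_{\Sigma,x}(G)]$. Once this is acknowledged, there is no substantive calculation left, and the corollary follows immediately; the real work was done inside Theorem \ref{thm:poisson} in verifying Equation \eqref{qbracket}.
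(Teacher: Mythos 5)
Your proposal is correct and follows essentially the same route as the paper: the paper's proof simply observes that a morphism of Poisson algebras dualizes to a Poisson map between the spaces (citing \cite[Chapter 1]{LPV}) and then invokes Theorem \ref{thm:poisson}. Your explicit verification that $\mathfrak{q}_*$ is the pullback along $\q^*$ is a harmless elaboration of the same argument.
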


\begin{proof}
Whenever there is a morphism of Poisson algebras, the dual map, between spaces, is always
Poisson \cite[Chapter 1]{LPV}. Thus, the corollary follows from Theorem \ref{thm:poisson}. 
\end{proof}

\begin{remark}
For $K$ a compact Lie group, $\X_{n,g}(K)\,=\,\hm(\pi_1(\Sigma_{n,g}),K)/K$ is semi-algebraic. As 
the complexification of $K$ is a reductive algebraic group $G$, $\X_{n,g}(K)$ naturally embeds into the 
$\mathbb{R}$-locus of $\X_{n,g}(G)$ by \cite[Theorem 4.3]{FlLa2}. Consequently, Equation \eqref{eq:poisson} defines a Poisson bracket on the 
real coordinate ring of $\X_{n,g}(K)$ by restriction of scalars. Thus, Theorem \ref{thm:poisson} and its corollary 
remain valid in this context as well. The proof is exactly the same.
\end{remark}

\begin{remark}
More generally, for a real form $H$ of $G$, the map $\X_{n,g}(H)\to \X_{n,g}(G)$ need not be an embedding (as it is for the compact real form), but it will be a finite map by the paragraph following \cite[Proposition 6.1]{CFLO}.  It would be interesting to explore if the above theorems remain valid in this context.
\end{remark}

\begin{example}
For any two surfaces $\Sigma_{n_1,g_1}$ and $\Sigma_{n_2,g_2}$ with $n_1>n_2>0$ and $\chi(\Sigma_{n_1,g_1})=\chi(\Sigma_{n_2,g_2})$, there is a quotient mapping $q:\Sigma_{n_1,g_1}\longrightarrow\Sigma_{n_2,g_2}$ identifying one or more pairs of boundary components. Both $\pi_1(\Sigma_{n_1,g_1},*)$ and $\pi_1(\Sigma_{n_2,g_2},q(*))$ are isomorphic to a free group of rank $1-\chi(\Sigma_{n_1,g_1})$. Since $q$ satisfies the conditions of Theorem \ref{thm:poisson}, the induced gluing map $$\mathfrak{q}^*:\X_{n_2,g_2}(G)\longrightarrow\X_{n_1,g_1}(G)$$ is Poisson. Since $\X_{n_2,g_2}(G)\,\cong\, \X_{n_1,g_1}(G)$ we have Poisson morphisms between isomorphic varieties with {\it different} Poisson structures. For a detailed example of this phenomena see \cite{L4}. 
\end{example}

\begin{example}
Another natural example that satisfies the conditions of Theorem \ref{thm:poisson} is the inclusion $\iota:\Sigma_1\hookrightarrow\Sigma_2$ of a subsurface $\Sigma_1$ into a surface $\Sigma_2$.  In this case, the induced map on character varieties $\mathfrak{q}^*:\X_{\Sigma_2,y}(G)\longrightarrow\X_{\Sigma_1,x}(G)$, which is Poisson, is the restriction map.  In the case when $\Sigma_2$ is closed and $\Sigma_1$ has boundary, we have a Poisson map whose domain is symplectic.
\end{example}

\section{Poisson structure on character varieties: De Rham Point-of-View}\label{se2}

Let, almost as before, $\Sigma_{n,g}$ be a connected orientable surface of genus $g\,\geq\, 0$ 
with $n\,\geq\, 0$ punctures, the differences being that:
\begin{enumerate}
\item the surface is $C^\infty$, not just topological, and

\item unlike in the previous case where we had surfaces with boundary, we only
consider here their interior, so that we have open surfaces when $n\,>\,0$.
\end{enumerate}
Let now $G$ be connected Lie group such that either:
\begin{itemize}
\item $G$ is compact, or

\item $G$ is a reductive affine algebraic group defined over the field of complex numbers.
\end{itemize}
Note that the reductive affine algebraic group defined over $\mathbb C$
are complexifications of compact connected Lie groups.

Fix a base point $x_0\, \in\, \Sigma_{n,g}$. When $G$ is a reductive affine algebraic group defined over $\mathbb C$,
a homomorphism $\rho\, :\, \pi_1(\Sigma_{n,g},\, x_0)\, \longrightarrow\, G$ is called \textit{reductive} if the
Zariski closure of $\rho(\pi_1(\Sigma_{n,g},\, x_0))$ in $G$ is a reductive subgroup. We note that $\rho$ is
reductive if and only if satisfies the following condition: if $P$ is a parabolic subgroup of $G$ such that
$\rho(\pi_1(\Sigma_{n,g},\, x_0))\, \subset\, P$, then $\rho(\pi_1(\Sigma_{n,g},\, x_0))$ is contained in a Levi
factor of $P$ (see \cite[11.2]{Bo} and \cite[p.~184]{Hu} for parabolic subgroups and their Levi factors).

If $G$ is a compact Lie group, then all homomorphisms $\rho\, :\, \pi_1(\Sigma_{n,g},\, x_0)\, \longrightarrow\, G$
are reductive.

Let
$$
{\mathcal R}^r_G(\Sigma_{n,g})\, \subset\, \text{Hom}(\pi_1(\Sigma_{n,g},\, x_0),\, G)
$$ be the space of 
all reductive homomorphisms. The adjoint action of $G$ on itself produces an action of $G$ on ${\mathcal 
R}^r_G(\Sigma_{n,g})$. The corresponding quotient space
\begin{equation}\label{e1}
\X_{n,g}(G)\, :=\, {\mathcal R}^r_G(\Sigma_{n,g})/G
\end{equation}
is again the character variety, this time with $G$ either reductive over the 
complex numbers or compact. Note that $\X_{n,g}(G)$ is actually independent of the choice of the base point 
$x_0$. When $G$ is a reductive affine algebraic group defined over $\mathbb C$, then, as we have seen,
$\text{Hom}(\pi_1(\Sigma_{n,g},\, x_0),\, G)$ is a complex affine variety, because $G$ is a complex affine variety 
and the group $\pi_1(\Sigma_{n,g},\, x_0)$ is finitely generated. In this case, restricting to reductive 
representations and taking the ordinary quotient coincides with the geometric invariant theoretic quotient 
$\text{Hom}(\pi_1(\Sigma_{n,g},\, x_0),\, G)\git G$ (in the analytic topology) by \cite[Theorem 2.1]{FlLa4}, and 
so inherits a natural algebraic structure. It is also homotopic to the non-Hausdorff quotient
$\text{Hom}(\pi_1(\Sigma_{n,g},\, x_0),\, G)/G$ by \cite[Proposition 3.4]{FLR}.

A $G$--connection on $\Sigma_{n,g}$ is a $C^\infty$ principal $G$--bundle on $\Sigma_{n,g}$ equipped with a connection.
If the curvature of a connection vanishes identically, then it is called a flat $G$--connection.
A flat $G$--connection on $\Sigma_{n,g}$ is called reductive if the corresponding monodromy homomorphism
is reductive. The character variety is identified with the moduli space of reductive flat $G$--connections
on $\Sigma_{n,g}$. This identification sends a flat $G$--connection to the monodromy homomorphism corresponding to
the flat connection.

Let $E_G$ be a $C^\infty$ principal $G$--bundle on $\Sigma_{n,g}$, and let $\nabla$ be a reductive flat
connection on $E_G$. So $(E_G,\, \nabla)$ gives a point
\begin{equation}\label{e2}
(E_G,\, \nabla)\, :=\, z\, \in\, \X_{n,g}(G)\, ,
\end{equation}
where $\X_{n,g}(G)$ is constructed in \eqref{e1}.

As before, the Lie algebra of $G$ will be denoted by $\mathfrak g$. The adjoint bundle
$$
\text{ad}(E_G)\, =\, E_G\times^G\mathfrak g
$$
is the vector bundle over $\Sigma_{n,g}$ associated to $E_G$ for the adjoint action of $G$ on $\mathfrak g$. So
the fibers of $\text{ad}(E_G)$ are Lie algebras identified with $\mathfrak g$ uniquely up to automorphisms
of $\mathfrak g$ given by conjugations. Fix a nondegenerate $G$--invariant symmetric bilinear form
\begin{equation}\label{B}
B\, \in\, \text{Sym}^2({\mathfrak g}^*)^G
\end{equation}
on $\mathfrak g$; the assumptions on $G$ ensure that such a form $B$ exists. Since $B$ in \eqref{B} is
$G$--invariant, it produces a $C^\infty$ pairing
\begin{equation}\label{B1}
{\mathcal B}\, :\, \text{ad}(E_G)\otimes \text{ad}(E_G)\, \longrightarrow\, \Sigma_{n,g}\times k\, ,
\end{equation}
where $k\,=\, \mathbb R$ (respectively, $k\,=\, \mathbb C$) when $G$ is compact (respectively,
complex reductive). Since $B$ in \eqref{B} is also nondegenerate, the pairing ${\mathcal B}$ in
\eqref{B1} is fiberwise nondegenerate. Therefore, ${\mathcal B}$ produces an isomorphism of vector bundles
\begin{equation}\label{e3}
\text{ad}(E_G)\,\stackrel{\sim}{\longrightarrow}\,\text{ad}(E_G)^*\, .
\end{equation}

The flat connection $\nabla$ on $E_G$ induces a flat connection on $\text{ad}(E_G)$; this induced
connection on $\text{ad}(E_G)$ will be denoted by $\nabla^{\rm ad}$.
We note that $\nabla^{\rm ad}$ is self-dual with respect to the isomorphism in
\eqref{e3}; this means that the connection on $\text{ad}(E_G)^*$ given by $\nabla^{\rm ad}$ using the
isomorphism in \eqref{e3} coincides with the connection on $\text{ad}(E_G)^*$ induced by
$\nabla^{\rm ad}$ using the duality pairing.

Let $\underline{\text{ad}}(E_G)$
be the local system on $\Sigma_{n,g}$ given by the sheaf of flat sections of $\text{ad}(E_G)$ for the connection
$\nabla^{\rm ad}$. We have
\begin{equation}\label{e4a}
T_z\X_{n,g}(G)\,=\, H^1(\Sigma_{n,g},\, \underline{\text{ad}}(E_G))\, ,
\end{equation}
where $z$ is the point in \eqref{e2} (for example \cite[Section 1.8]{Go}). Therefore, the Poincare--Verdier duality gives that
\begin{equation}\label{e4b}
T^*_z\X_{n,g}(G)\,=\, H^1(\Sigma_{n,g},\, \underline{\text{ad}}(E_G))^*\,=\,
H^1_c(\Sigma_{n,g},\, \underline{\text{ad}}(E_G))\, ,
\end{equation}
where $H^i_c$ denotes the compactly supported $i$-th cohomology. We have a natural homomorphism
$$
T^*_z\X_{n,g}(G)\,=\, H^1_c(\Sigma_{n,g},\, \underline{\text{ad}}(E_G))\, \longrightarrow\,
H^1(\Sigma_{n,g},\, \underline{\text{ad}}(E_G))\,=\,T_z\X_{n,g}(G)\, .
$$
As $z$ moves over $\X_{n,g}(G)$, these point-wise homomorphisms together produce a $C^\infty$
homomorphism
\begin{equation}\label{e4}
\Theta\, :\, T^*\X_{n,g}(G) \, \longrightarrow\,T\X_{n,g}(G)\, .
\end{equation}

A Poisson structure on a $C^\infty$ manifold $A$ is a section
$\theta_A\, \in\, C^\infty(A,\, \bigwedge^2 TA)$ 
such that the Schouten--Nijenhuis bracket
$[\theta_A,\, \theta_A]$ vanishes identically \cite{Ar}. The condition that
$[\theta_A,\, \theta_A]\,=\, 0$ is equivalent to the following condition:
given a pair of $C^\infty$ locally defined functions $f_1$ and $f_2$ on $A$, consider the
locally defined $C^\infty$ function
$$
\{f_1,\, f_2\}_{\theta_A}\,:=\, \theta_A((df_1)\wedge (df_2))\, ;
$$
then $$\{f_1,\, \{f_2,\, f_3\}_{\theta_A}\}_{\theta_A}+ \{f_2,\, \{f_3,\, f_1\}_{\theta_A}\}_{\theta_A} +
\{f_3,\, \{f_1,\, f_2\}_{\theta_A}\}_{\theta_A}\,=\, 0$$
for all $C^\infty$ locally defined functions $f_1,\, f_2,\, f_3$.

The $C^\infty$ homomorphism $\Theta$ in \eqref{e4} is a Poisson structure on $\X_{n,g}(G)$
(see \cite{BJ}).

When $\Sigma_{n,g}$ is a compact oriented surface ($n\,=\,0$), then $\Theta$ is an isomorphism, so the Poisson structure given by it
is actually a symplectic structure. In that case, it coincides with the symplectic structure constructed
by Atiyah--Bott \cite{AB} and Goldman \cite{Go}.

\section{Open subsets and Poisson structure}\label{se3}

Let $\Sigma_1\,= \,\Sigma_{n_1, g_1}$ be embedded as a connected open subset of $\Sigma_2\,= \,\Sigma_{n_2, g_2}$.  Take the base point $x_0\, \in\, \Sigma_2$ such that $x_0\,\in\, \Sigma_1$.

Restricting the flat $G$--connections on $\Sigma_2$ to the open subset $\Sigma_1$ we obtain a map
\begin{equation}\label{e5}
\Phi\, :\, \X_{\Sigma_2}(G) \,\longrightarrow\, \X_{\Sigma_1}(G) \, .
\end{equation}
Indeed, as above, the natural homomorphism $\pi_1(\Sigma_1,\, x_0)\, \longrightarrow\,
\pi_1(\Sigma_2,\, x_0)$ produces a map
$$
\text{Hom}(\pi_1(\Sigma_2,\, x_0),\, G)\, \longrightarrow\,\text{Hom}(\pi_1(\Sigma_1,\, x_0),\, G)\, ;
$$
it in turn gives the map $\Phi$ in \eqref{e5} by taking geometric invariant theoretic quotient for the actions of
$G$ on $\text{Hom}(\pi_1(\Sigma_2,\, x_0),\, G)$ and $\text{Hom}(\pi_1(\Sigma_1,\, x_0),\, G)$.

As in \eqref{e2}, take any $(E_G,\, \nabla)\, :=\, z\, \in\, \X_{\Sigma_2}(G)$.
Consider the local system $\underline{\text{ad}}(E_G)$ on $\Sigma_2$ (see \eqref{e4a}, \eqref{e4b}).
Its restriction to $\Sigma_1$ will be denoted by $\underline{\text{ad}}(E_G)_{\Sigma_1}$.
The inclusion map $\Sigma_1\, \hookrightarrow\, \Sigma_2$ produces homomorphisms
\begin{equation}\label{e6}
\beta\, :\,H^1(\Sigma_2,\, \underline{\text{ad}}(E_G))\, \longrightarrow\, 
H^1(\Sigma_1,\, \underline{\text{ad}}(E_G)_{\Sigma_1})\, ,
\end{equation}
and
\begin{equation}\label{e7}
\gamma\, :\,H^1_c(\Sigma_1,\, \underline{\text{ad}}(E_G)_{\Sigma_1})\, \longrightarrow\, 
H^1_c(\Sigma_2,\, \underline{\text{ad}}(E_G))
\end{equation}
(see \eqref{e4b}). We note that $\beta$ is the pullback by the inclusion map of $\Sigma_1$ in $\Sigma_2$, while
$\gamma$ is the push-forward by the inclusion map.

For the map $\Phi$ in \eqref{e5}, let
$$
d\Phi (z)\, :\, T_z\X_{\Sigma_2}(G)\,\longrightarrow\, T_{\Phi (z)} \X_{\Sigma_1}(G)
$$
be its differential at the point $z$ in \eqref{e2}. Let
$$
(d\Phi)^* (z)\, :\, T^*_{\Phi (z)} \X_{\Sigma_1}(G)\,\longrightarrow\, T^*_z \X_{\Sigma_2}(G)
$$
be its dual homomorphism.

\begin{proposition}\label{prop1}
Invoke the isomorphism in \eqref{e4a} $($respectively, \eqref{e4b}$)$ for $T_z \X_{\Sigma_2}(G)$ and
$T_{\Phi (z)} \X_{\Sigma_1}(G)$ $($respectively, $T^*_z \X_{\Sigma_2}(G)$ and 
$T^*_{\Phi (z)}\X_{\Sigma_1}(G)$ $)$.
\begin{enumerate}
\item The homomorphism $d\Phi (z)$ coincides with the homomorphism $\beta$ in \eqref{e6}.

\item The homomorphism $(d\Phi)^*(z)$ coincides with the homomorphism $\gamma$ in \eqref{e7}.
\end{enumerate}
\end{proposition}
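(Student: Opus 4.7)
The strategy is to work with the de Rham realization of the local-system cohomology underlying \eqref{e4a}: at the point $z\,=\,(E_G,\,\nabla)$, the tangent space $H^1(\Sigma_2,\,\underline{\text{ad}}(E_G))$ is computed as $\nabla^{\mathrm{ad}}$-closed $\text{ad}(E_G)$-valued 1-forms on $\Sigma_2$ modulo $\nabla^{\mathrm{ad}}$-exact ones, and similarly for $\Sigma_1$. Such a class is represented by a first-order deformation $\nabla_t\,=\,\nabla+t\eta+O(t^2)$ of flat connections; the flatness equation $F(\nabla_t)\,=\,O(t^2)$ linearizes to $d^{\nabla^{\mathrm{ad}}}\eta\,=\,0$.

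For (1), the key observation is that $\Phi$ literally restricts flat $G$-connections from $\Sigma_2$ to the open subset $\Sigma_1$. Hence $\Phi(\nabla_t)$ is represented by $\nabla|_{\Sigma_1}+t\,\eta|_{\Sigma_1}+O(t^2)$, and the deformation-theoretic interpretation of \eqref{e4a} gives $d\Phi(z)[\eta]\,=\,[\eta|_{\Sigma_1}]$. But restriction of closed forms is precisely the pullback on local-system cohomology along the open inclusion $j\,:\,\Sigma_1\,\hookrightarrow\,\Sigma_2$, which is the map $\beta$ of \eqref{e6}. Hence $d\Phi(z)\,=\,\beta$.

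Part (2) then follows from (1) by Poincar\'e--Verdier duality. Under the isomorphism \eqref{e3} induced by the $G$-invariant form $B$, the local system $\underline{\text{ad}}(E_G)$ is self-dual, so Poincar\'e--Verdier duality produces the identification \eqref{e4b} of $T^*_z\X_{\Sigma_2}(G)$ with $H^1_c(\Sigma_2,\,\underline{\text{ad}}(E_G))$, and analogously for $\Sigma_1$. The standard naturality of Poincar\'e--Verdier duality for an open inclusion asserts that the transpose of the restriction map $j^*$ on $H^1$ is the compactly supported extension-by-zero $j_!$ on $H^1_c$; this is exactly the assertion $\beta^*\,=\,\gamma$, with $\gamma$ as in \eqref{e7}. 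Combined with (1), this gives $(d\Phi)^*(z)\,=\,\gamma$.

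The main technical point, and where I expect the proof to require the most care, is to check that the Poincar\'e--Verdier pairing underlying \eqref{e4b} agrees with the explicit pairing furnished by $B$. On de Rham representatives this pairing is $\langle\eta,\,\mu\rangle\,=\,\int_{\Sigma_2}\mathcal{B}(\eta\wedge\mu)$ with $\mu$ a compactly supported closed $\text{ad}(E_G)$-valued 1-form, and the required adjointness $\langle\beta(\eta),\,\mu\rangle_{\Sigma_1}\,=\,\langle\eta,\,\gamma(\mu)\rangle_{\Sigma_2}$ is then immediate from the fact that $\gamma(\mu)\,=\,j_!\mu$ is supported in $\Sigma_1$. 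Using Stokes' theorem to confirm that this pairing descends to cohomology completes the dualization of (1) to (2).
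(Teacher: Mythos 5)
Your proposal is correct and follows essentially the same route as the paper: part (1) is the standard deformation-theoretic identification of $d\Phi(z)$ with the restriction/pullback map $\beta$, and part (2) is obtained by dualizing via exactly the adjunction identity $\int_{\Sigma_1}\mathcal{B}(\mu\wedge\iota^*\eta)=\int_{\Sigma_2}\mathcal{B}((\iota_*\mu)\wedge\eta)$ that the paper records as \eqref{e8}. The only difference is cosmetic: you spell out the first statement, which the paper dismisses as standard.
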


\begin{proof}
The first statement is standard.

To prove the second statement, take any $\text{ad}(E_G)$--valued one-form
$$\omega\, \in\, C^\infty(\Sigma_2,\, \text{ad}(E_G)\otimes T^*\Sigma_2)$$ such that
$\nabla^{\rm ad}(\omega)\,=\, 0$, where $\nabla^{\rm ad}$, as before, is the connection on
$\text{ad}(E_G)$ induced by the connection $\nabla$ (see \eqref{e2}). Also take a
compactly supported $\text{ad}(E_G)$--valued one-form
$\omega^c\, \in\, C^\infty_c(\Sigma_1,\, (\text{ad}(E_G)\vert_{\Sigma_1})\otimes T^*\Sigma_1)$ on $\Sigma_1$ such that
$\nabla^{\rm ad}(\omega^c)\,=\, 0$. Let $\iota\, :\, \Sigma_1\, \hookrightarrow\, \Sigma_2$ be the inclusion map.
Then we have
\begin{equation}\label{e8}
\int_{\Sigma_1} {\mathcal B}(\omega^c\wedge (\iota^*\omega))\,=\,\int_{\Sigma_2}{\mathcal B}((\iota_*\omega^c)\wedge\omega)\, ,
\end{equation}
where $\mathcal B$ is the pairing in \eqref{B1}, and $\iota_*\omega^c$ is the
push-forward of the compactly supported form $\omega^c$ using the inclusion
map $\iota$; note that both ${\mathcal B}(\omega^c\wedge (\iota^*\omega))$
and ${\mathcal B}((\iota_*\omega^c)\wedge\omega)$ are compactly supported $2$-forms on $\Sigma_1$, and moreover
they coincide. The second statement in the proposition follows from \eqref{e8}.
\end{proof}

A smooth map $F\, :\, A\, \longrightarrow\, B$ between Poisson manifolds $(A,\, \theta_A)$ and
$(B,\, \theta_B)$ is called \textit{Poisson} if
$$F\circ \{f,\, g\}_{\theta_B}\,=\,\{F\circ f,\, F\circ g\}_{\theta_A}$$
for all locally defined $C^\infty$ functions $f$ and $g$ on $B$.  This is equivalent to our usage in Corollary \ref{cor:poisson}.  We note that $F$ is Poisson if and only if the following diagram
is commutative
\begin{equation}\label{F}
\begin{matrix}
T^*_{F(x)} B & \xrightarrow{(dF)^*(x)=dF(x)^*} & T^*_x A\\
\,\,\,\,\,\,\,\,\,\,\,\,\,\,\,\,\,\,\,\,\Big\downarrow \theta_B(F(x)) &&
\,\,\,\,\,\,\,\,\,\,\,\,\,\,\Big\downarrow \theta_A(x)\\
T_{F(x)} B & \xleftarrow{dF(x)} & T_x A
\end{matrix}
\end{equation}
for every point $x\, \in\, A$, where $dF\, :\, TA\, \longrightarrow\, F^*TB$ is the differential
of the map $F$ while $(dF)^*$ is its dual. 

\begin{theorem}\label{thm1}
The map $\Phi$ in \eqref{e5} is Poisson.
\end{theorem}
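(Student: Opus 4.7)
My plan is to check the commutativity of the diagram \eqref{F} for $F\,=\,\Phi$ at an arbitrary point $z\,=\,(E_G,\,\nabla)\,\in\,\X_{\Sigma_2}(G)$. First I would invoke Proposition \ref{prop1} to replace $d\Phi(z)$ by the pullback $\beta$ in \eqref{e6} and $(d\Phi)^*(z)$ by the push-forward $\gamma$ in \eqref{e7}. Since, by the construction in \eqref{e4}, the Poisson bi-vectors on $\X_{\Sigma_1}(G)$ and $\X_{\Sigma_2}(G)$ correspond under the identifications \eqref{e4a}--\eqref{e4b} to the canonical ``forget compact support'' homomorphism $\Theta\,:\,H^1_c\,\longrightarrow\,H^1$ of the appropriate local systems, the diagram \eqref{F} reduces, for every $z$, to the commutativity of
\begin{equation*}
\begin{matrix}
H^1_c(\Sigma_1,\,\underline{\mathrm{ad}}(E_G)_{\Sigma_1}) & \xrightarrow{\,\gamma\,} & H^1_c(\Sigma_2,\,\underline{\mathrm{ad}}(E_G))\\
\,\,\,\,\,\,\,\,\,\,\Big\downarrow \Theta_{\Sigma_1}(\Phi(z)) & & \,\,\,\,\,\,\,\,\,\,\Big\downarrow \Theta_{\Sigma_2}(z)\\
H^1(\Sigma_1,\,\underline{\mathrm{ad}}(E_G)_{\Sigma_1}) & \xleftarrow{\,\beta\,} & H^1(\Sigma_2,\,\underline{\mathrm{ad}}(E_G)),
\end{matrix}
\end{equation*}
whose vertical arrows are the natural maps.

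I would then verify this square at the level of flat $\mathrm{ad}(E_G)$-valued $1$-form representatives. Writing $\iota\,:\,\Sigma_1\,\hookrightarrow\,\Sigma_2$ for the inclusion and letting $\omega^c$ be a flat compactly supported representative of a class in $H^1_c(\Sigma_1,\,\underline{\mathrm{ad}}(E_G)_{\Sigma_1})$, the composition going right--down--left first extends $\omega^c$ by zero to $\iota_*\omega^c\,\in\,H^1_c(\Sigma_2,\,\underline{\mathrm{ad}}(E_G))$, then views it (via $\Theta_{\Sigma_2}$) as a class in $H^1(\Sigma_2,\,\underline{\mathrm{ad}}(E_G))$, and finally restricts it by $\beta$ to $\iota^*\iota_*\omega^c\,=\,\omega^c$ in $H^1(\Sigma_1,\,\underline{\mathrm{ad}}(E_G)_{\Sigma_1})$, since the restriction to $\Sigma_1$ of the extension-by-zero of a form supported in $\Sigma_1$ is the original form. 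The direct downward arrow $\Theta_{\Sigma_1}(\Phi(z))$ yields the same class, so the square commutes.

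The main obstacle is essentially absent at this point of the paper, since all the analytic work has already been packaged into Proposition \ref{prop1} and the identity \eqref{e8}, which is nothing other than the de Rham manifestation of the adjunction $(\iota_*,\,\iota^*)$. The only point deserving care is to keep straight the convention that the bi-vector $\theta$ and the bundle map $\Theta$ correspond in the standard way $\theta(\alpha,\,\beta)\,=\,\alpha(\Theta(\beta))$, so that \eqref{F} genuinely encodes the Poisson condition $\Phi^*\{f,\,g\}\,=\,\{\Phi^* f,\,\Phi^* g\}$ of Corollary \ref{cor:poisson}; with that convention fixed, the argument above establishes the theorem.
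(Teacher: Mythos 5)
Your proposal is correct and follows essentially the same route as the paper: both reduce the claim via Proposition \ref{prop1} to the commutativity of the diagram in \eqref{F}, and both verify it on flat $1$-form representatives by observing that restricting the extension-by-zero $\iota_*\omega^c$ back to $\Sigma_1$ recovers $\omega^c$. Your version merely makes explicit the square involving the natural maps $H^1_c\to H^1$ that the paper leaves implicit.
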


\begin{proof}
In view of Proposition \ref{prop1} it is straight-forward to check that the diagram in
\eqref{F} for $F\,=\, \Phi$ commutes. To explain this, take any
$$(E_G,\, \nabla)\, :=\, z\, \in\, \X_{\Sigma_1}(G)$$ as in \eqref{e2}. Take any compactly
supported form
$$\omega\, \in\, C^\infty_c(\Sigma_1,\, (\text{ad}(E_G)\vert_{\Sigma_1})\otimes T^*\Sigma_1)$$ such that
$\nabla^{\rm ad}(\omega)\,=\, 0$, where $\nabla^{\rm ad}$ is the connection on
$\text{ad}(E_G)$ induced by the connection $\nabla$ (see \eqref{e2}). Let $\iota_*\omega\, \in\,
C^\infty_c(\Sigma_2,\, \text{ad}(E_G)\otimes T^*\Sigma_2)$ be the push-forward of $\omega$ using the inclusion map
$\iota\, :\, \Sigma_1\, \hookrightarrow\, \Sigma_2$. Now consider $\iota_*\omega$ as an element of
$C^\infty(\Sigma_2,\, \text{ad}(E_G)\otimes T^*\Sigma_2)$;
finally, restrict this element of $C^\infty(\Sigma_2,\, \text{ad}(E_G)\otimes T^*\Sigma_2)$ to $\Sigma_1$. This
restriction is evidently $\omega$ itself. Hence the diagram in \eqref{F} commutes for $F\,=\, \Phi$.
\end{proof}

The theorem was stated for one surface embedded in another; it holds more generally for suitable ramified covers.
Let $\overline{\Sigma_1}$ and $\overline{\Sigma_2}$ be compact connected oriented $C^\infty$ surfaces and
$$
\varphi_1\, :\, \overline{\Sigma_1}\, \longrightarrow\, \overline{\Sigma_2}
$$
a possibly ramified covering map which is oriented. Let $$S_2\, \subset\,\overline{\Sigma_2}
\ \ \text{ and }\ \ S_1\, \subset\,\overline{\Sigma_1}$$ be finite subsets such that $\varphi^{-1}_1(S_2)
\, \subset\, S_1$. Define
$$
\Sigma_2\, :=\, \overline{\Sigma_2}\setminus S_2 \ \ \text{ and }\ \ {\Sigma_1} \, :=\, \overline{\Sigma_1}\setminus S_1\, ;
$$
let
$$
\varphi\, :=\, \varphi_1\vert_{\Sigma_1}\, :\, \Sigma_1\, \longrightarrow\, \Sigma_2
$$
be the restriction of $\varphi_1$ to the open subset $\Sigma_1$.

Consider the character varieties $\X_{\Sigma_1}(G)$ and $\X_{\Sigma_2}(G)$. Let
\begin{equation}\label{Psi}
\Psi\, :\, \X_{\Sigma_2}(G) \, \longrightarrow\, \X_{\Sigma_1}(G)
\end{equation}
be the map that sends any flat principal $G$--bundle $(E_G,\, \nabla)$ on $\Sigma_2$ to the
flat principal $G$--bundle $(\varphi^*E_G,\, \varphi^*\nabla)$ on $\Sigma_1$. This map $\Psi$
coincides with map of character varieties given by the homomorphism
$$
\varphi_*\, :\, \pi_1(\Sigma_1,\, y_0)\, \longrightarrow\, \pi_1(\Sigma_2,\, \varphi(y_0))
$$
induced by $\varphi$.

\begin{proposition}\label{prop2}
The map $\Psi$ in \eqref{Psi} is Poisson.
\end{proposition}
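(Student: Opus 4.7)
The plan is to adapt the proof of Theorem \ref{thm1} from open embeddings to covering maps. First, after enlarging $S_1$ if necessary to include every ramification point of $\varphi_1$, I may assume that $\varphi\,:\, \Sigma_1\,\longrightarrow\,\Sigma_2$ is an unramified, oriented covering of the open surfaces; in particular, $\varphi$ is a local diffeomorphism with finite fibers, and reductivity of the monodromy is preserved since the image of $\rho\circ \varphi_*$ sits inside that of $\rho$.

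For $z\,=\,(E_G,\,\nabla)\,\in\, \X_{\Sigma_2}(G)$, I would next establish the covering analog of Proposition \ref{prop1}. The differential $d\Psi(z)$ is identified, under \eqref{e4a}, with the standard pullback homomorphism $\varphi^*\,:\, H^1(\Sigma_2,\,\underline{\text{ad}}(E_G))\,\longrightarrow\, H^1(\Sigma_1,\,\varphi^*\underline{\text{ad}}(E_G))$, paralleling Proposition \ref{prop1}(1). Dually, under \eqref{e4b}, the map $(d\Psi)^*(z)$ is the transfer
$$\varphi_!\,:\, H^1_c(\Sigma_1,\,\varphi^*\underline{\text{ad}}(E_G))\,\longrightarrow\, H^1_c(\Sigma_2,\,\underline{\text{ad}}(E_G)),$$
obtained by integration along the finite oriented fibers of $\varphi$. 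The verification rests on the projection formula
$$\int_{\Sigma_1} \mathcal{B}(\omega^c \wedge \varphi^*\omega)\,=\, \int_{\Sigma_2} \mathcal{B}((\varphi_!\omega^c)\wedge \omega),$$
which generalizes \eqref{e8} and follows from the sheet-by-sheet change-of-variables for the local diffeomorphism $\varphi$, applied on a trivializing cover of $\Sigma_2$.

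With these two identifications in hand, substituting into diagram \eqref{F} with $F\,=\,\Psi$ reduces the Poisson condition to a naturality statement for the Poincar\'e--Verdier pairing underlying $\Theta$ in \eqref{e4}, with respect to the adjoint pair $(\varphi^*,\, \varphi_!)$. I expect this final compatibility to be the main obstacle: in cohomology one must show that $\varphi^*(\varphi_!\omega^c)$ and $\omega^c$ represent the same class in $H^1(\Sigma_1,\,\varphi^*\underline{\text{ad}}(E_G))$, which requires careful bookkeeping of the sheets of $\varphi$ against the orientation of the fibers, and uses the correct normalization of $\mathcal{B}$ and of the Poincar\'e--Verdier pairing. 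Once this compatibility is verified, the commutativity of \eqref{F} is immediate, and the Poisson property of $\Psi$ follows exactly as in the proof of Theorem \ref{thm1}.
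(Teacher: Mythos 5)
Your strategy is the same as the paper's: identify $d\Psi$ with the pullback $\varphi^*$ on $H^1$, identify $(d\Psi)^*$ with a transfer/trace map $\varphi_!$ on $H^1_c$ via the projection formula (the paper's \eqref{e9}), and then check commutativity of the diagram \eqref{F}. Two points, however, need attention. First, the opening reduction is not legitimate as stated: enlarging $S_1$ to contain the ramification points replaces $\Sigma_1$ by a more-punctured surface, which changes $\pi_1(\Sigma_1)$, hence $\X_{\Sigma_1}(G)$, hence the very map $\Psi$ whose Poisson property is being asserted. Knowing that the modified map $\X_{\Sigma_2}(G)\to\X_{\Sigma_1'}(G)$ is Poisson does not, without a further argument, yield the statement for the original target. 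The paper avoids this by keeping $\Sigma_1$ fixed: it defines the trace form $\widehat{\omega}$ only over the unramified locus and then checks that it extends smoothly to all of $\Sigma_2$, remains compactly supported, and is $\nabla^{\rm ad}$-flat.

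Second, and more seriously, you defer exactly the step that carries all the content. You correctly reduce the commutativity of \eqref{F} to the assertion that $\varphi^*(\varphi_!\omega^c)$ and $\omega^c$ represent the same class in $H^1(\Sigma_1,\,\varphi^*\underline{\text{ad}}(E_G))$, but this cannot be left as an expectation: for a covering of degree $d$ the composite $\varphi^*\circ\varphi_!$ is the sum over the $d$ sheets (for a Galois cover, $\sum_{\gamma}\gamma^*$ over the deck group), which is not the identity when $d\,>\,1$, and the projection formula \eqref{e9} by itself only identifies $(d\Psi)^*$ with $\varphi_!$ --- it says nothing about $\varphi^*\varphi_!$. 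Concretely, $\int_{\Sigma_1}{\mathcal B}(\varphi^*a\wedge \varphi^*b)\,=\,d\int_{\Sigma_2}{\mathcal B}(a\wedge b)$, so the pullback of the leafwise symplectic pairing acquires a factor of $d$; any complete proof must explain how this degree factor is absorbed in \eqref{F} (or renormalize accordingly). The paper's own proof dismisses this verification as straightforward, but in your write-up it is precisely the unproved step, so the proposal is incomplete at the point you yourself flag as the main obstacle.
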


\begin{proof}
Take any
$$(E_G,\, \nabla)\, :=\, z\, \in\, \X_{\Sigma_2}(G)\, .$$
Let $\nabla^{\rm ad}$ be the flat connection on $\text{ad}(E_G)$ induced by $\nabla$. Let
$\underline{\text{ad}}(E_G)$ be the local system on $\Sigma_2$ given by the sheaf of flat sections of
$\text{ad}(E_G)$ for the connection $\nabla^{\rm ad}$. From \eqref{e4a} we know that
$T_z\X_{\Sigma_2}(G)\,=\, H^1(\Sigma_2,\, \underline{\text{ad}}(E_G))$ and
$$
T_{\Psi(z)}\X_{\Sigma_1}(G)\,=\, H^1(\Sigma_1,\, \underline{\text{ad}}(\varphi^*E_G))
\,=\, H^1(\Sigma_1,\, \varphi^*\underline{\text{ad}}(E_G))\, ,
$$
where $\underline{\text{ad}}(\varphi^*E_G)$ is the local system on $\Sigma_1$ given by the sheaf of flat sections of
$\text{ad}(\varphi^* E_G)$ for the flat connection on $\text{ad}(\varphi^* E_G)$
induced by the flat connection $\varphi^*\nabla$ on $\varphi^* E_G$. Note that this induced flat connection
on $\text{ad}(\varphi^* E_G)$ coincides with the flat connection $\varphi^*\nabla^{\rm ad}$ on
$\text{ad}(\varphi^* E_G)\,=\, \varphi^*\text{ad}(E_G)$.

The differential $d\Psi$ at $z$
$$
d\Psi (z)\, :\, 
H^1(\Sigma_2,\, \underline{\text{ad}}(E_G))\,=\, T_z\X_{\Sigma_2}(G)\, \longrightarrow\,
T_{\Psi(z)}\X_{\Sigma_1}(G)\,=\, H^1(\Sigma_1,\, \varphi^*\underline{\text{ad}}(E_G))
$$
coincides with the homomorphism $H^1(\Sigma_2,\, \underline{\text{ad}}(E_G))\, \longrightarrow\,
H^1(\Sigma_1,\, \varphi^*\underline{\text{ad}}(E_G))$ that sends any cohomology class $\mu$ to
its pullback $\varphi^*\mu$.

The dual homomorphism
$$
(d\Psi)^* (z)\, :\, T^*_{\Psi(z)}\X_{\Sigma_1}(G)\,=\,
H^1_c(\Sigma_1,\, \varphi^*\underline{\text{ad}}(E_G))\, \longrightarrow\,
H^1_c(\Sigma_2,\, \underline{\text{ad}}(E_G))\,=\, T^*_z\X_{\Sigma_2}(G)
$$
(see \eqref{e4b}) coincides with the trace map. To explain the trace map, take any
compactly supported $\varphi^*\text{ad}(E_G)$--valued $1$--form
\begin{equation}\label{fm}
\omega\, \in\, C^\infty_c(\Sigma_1,\, \varphi^*\text{ad}(E_G)\otimes T^*\Sigma_1)\,=\,
C^\infty_c(\Sigma_1,\, \text{ad}(\varphi^* E_G)\otimes T^*\Sigma_1)
\end{equation}
such that $\varphi^*\nabla^{\rm ad}(\omega)\,=\, 0$. Now construct
$$
\widehat{\omega}\, \in\, C^\infty(\Sigma_2,\, \text{ad}(E_G)\otimes T^*\Sigma_2)
$$
as follows: For any $x\, \in\, \Sigma_2$ such that $\varphi$ is unramified over $x$,
take any $v\, \in\, T_x \Sigma_2$. Define
$$
\widehat{\omega}(x)(v) \, =\, \sum_{y\in \varphi^{-1}(x)} \omega (y)((d\varphi (y))^{-1}(v))
\, \in\, \text{ad}(E_G)_x\, ,
$$
where $d\varphi (y)$ is the differential of the map $\varphi$ at $y$. It is straight-forward
to check the following:
\begin{itemize}
\item The above form $\widehat{\omega}$ extends to entire $\Sigma_2$ as a $C^\infty$ section of
$\text{ad}(E_G)\otimes T^*\Sigma_2$; this section of $\text{ad}(E_G)\otimes T^*\Sigma_2$ over $\Sigma_2$ will
be denoted by $\widehat{\omega}$.

\item The form $\widehat{\omega}$ on $\Sigma_2$ is compactly supported. In fact, the support of
$\widehat{\omega}$ is contained in the image of the support of $\omega$ under the map $\varphi$.
This implies that $\widehat{\omega}$ is compactly supported.

\item $\nabla^{\rm ad}(\widehat{\omega})\,=\, 0$.
\end{itemize}

The trace map $$H^1_c(\Sigma_1,\, \varphi^*\underline{\text{ad}}(E_G))\, \longrightarrow\,
H^1_c(\Sigma_2,\, \underline{\text{ad}}(E_G))$$ mentioned earlier is constructed by sending any
$\omega$ as in \eqref{fm} to $\widehat{\omega}$ constructed above from it.

{}From the above construction of $\widehat{\omega}$ we conclude that $\widehat{\omega}$ has the following
property: For any $\omega'\, \in\, C^\infty(\Sigma_2,\, \text{ad}(E_G)\otimes T^*\Sigma_2)$,
\begin{equation}\label{e9}
\int_{\Sigma_2}{\mathcal B}(\widehat{\omega}\wedge \omega')\,=\, \int_{\Sigma_1}{\mathcal B}(\omega\wedge (\varphi^*\omega'))\, ,
\end{equation}
where ${\mathcal B}$ is the pairing in \eqref{B1}.

{}Using \eqref{e9} it is straightforward to check that
the diagram in \eqref{F} commutes for $F\,=\, \Psi$. Hence the map $\Psi$ is Poisson.
\end{proof}

\section{Capping: Symplectic and Poisson extensions of Poisson character varieties}\label{capping}

For a given $\Sigma_1\,=\, \Sigma_{n_1, g_1}$ with a non-empty boundary, we consider different $\Sigma_2$ 
we can obtain by gluing onto the boundary components of $\Sigma_1$. This yields Poisson maps of the character varieties:
\begin{equation}
\Phi\, :\, \X_{\Sigma_2}(G) \,\longrightarrow\, \X_{\Sigma_1}(G) \, ,
\end{equation}
and we will examine the images and fibres of these maps. When $\Sigma_2$ is closed, this gives us in some sense symplectic completions of the character varieties.

Number the $n_1$ boundary components of $\Sigma_1\,=\,\Sigma_{n_1, g_1}$.
Choose a base point $x_1$ on the first boundary component $\gamma_1$ of $\Sigma_1$.
For every $2\, \leq\, i\, \leq\, n_1$, fix a path $p_i$ from $x_1$ to base point $x_i$ on the $i$-th boundary
component $\gamma_i$ of $\Sigma_1$. With 
parametrizations $\mu_i$ respecting the orientations of the $\gamma_i$ and the right choice of paths, we get some 
standard generators $c_1 \,=\, \mu_1, c_i \,=\, p_i\mu_ip_i^{-1}$ of the fundamental group $\pi_1(\Sigma_1,\, x_1)$.

\subsection{Case 1: Capping with disks.} We consider first the case when $\Sigma_2$ is obtained from $\Sigma_1\,=\, 
\Sigma_{n_1, g_1}$ by gluing in $k\,\leq\, n_1$ disks, identifying the boundaries $\widetilde{\gamma}_i$ of disks $D_i$ 
with $\gamma_i$, so that $\Sigma_2 $ has $k$ of the holes of $\Sigma_1$ filled in. In this case, it is 
straightforward to see that $\Phi$ is an injection, with image the representations $\rho$ with $\rho(c_i)\, =\, 1.$ 
This is a union of a family of symplectic leaves, and if $k\,=\, n_1$, a single symplectic leaf.

\subsection{Case 2: Capping with a cylinder.} Now consider the situation where one glues in a cylinder, attaching 
the boundary circles $\widetilde{\gamma}_1$,\, $\widetilde{\gamma}_2$ of a cylinder to boundary circles 
$\gamma_1$,\, $\gamma_2$ of $\Sigma_1$. The resulting $\Sigma_2$ has two less boundary components and genus one 
more.

One can again ask what the image and fibres of $\Phi$ are in this case. For a flat connection on $\Sigma_1$, 
choose a flat trivialization along the path $p_2$. We then, for the circles $\gamma_i$, have holonomies $C_i$. 
For these to lie in the image of the flat connections on $\Sigma_2$, we need to be able to glue a flat connection 
on $\Sigma_1$ to a flat connection on the cylinder.

On the cylinder, choose base points on the circles $\widetilde{\gamma}_1$, $\widetilde{\gamma}_2$, and a path
$\beta$ from $\widetilde{\gamma}_1$ to $\widetilde{\gamma}_2$. On the cylinder, we have the relation in the
fundamental groupoid:
$$\widetilde{\gamma}_1\beta^{-1}\widetilde{\gamma}_2^{-1}\beta\,= \,1.$$
Once one trivializes at the two base points, a flat connection determines corresponding holonomies $\widetilde{C}_i,\,
B$, satisfying $$\widetilde{C}_1 B^{-1} \widetilde{C}_2^{-1}B\,=\,1.$$

The gluing, on the level of connections matches $C_i$ with $\widetilde{C}_i$. We then must have: $$ C_1 B^{-1} 
C_2^{-1} B\,=\,1$$ for some matrix $B$, which is the image of an extra cycle created by the gluing. Thus the image 
in $\X_{\Sigma_1}(G)$ of $\X_{\Sigma_2}(G)$, is the union of symplectic leaves for which the conjugacy class along 
$\gamma_1$ is the inverse of the conjugacy class along $\gamma_2$, while the fibre is isomorphic to the stabilizer 
of $C_2$ under conjugation. Note that $\X_{\Sigma_1}(G)$ and $\X_{\Sigma_2}(G)$ have the same dimension.

\subsection{Case 3: Capping with a $k$-holed sphere.} We now consider the case of gluing a $k$-holed 
sphere ($k\, \geq\, 3$) with boundary circles $\widetilde{\gamma}_i$, $i\, =\, 1,\,\dots,\,k$, to the boundary circles
$\gamma_i$, $i\, =\, 1,\,\dots,\,k$, of 
$\Sigma_1$. The resulting $\Sigma_2$ will have $k$ less boundary components, and genus $k-1$ more.

On the sphere, choose base points on the circles $\widetilde \gamma_i$, $i \,= \,1,\,\dots,\,k$,
and paths $\beta_i$,\, $i \,= \,1,\,\dots,\,k$, from $\widetilde{\gamma}_1$ to $\widetilde{\gamma}_i$. We
have the relation in the fundamental group:
$$\widetilde{\gamma}_1^{-1}\beta_2^{-1}\widetilde{\gamma}_2^{-1}\beta_2\cdots
\beta_i^{-1}\widetilde{\gamma}_i^{-1}\beta_i\cdots \beta_k^{-1}
\widetilde{\gamma}_k^{-1}\beta_k\,=\, 1.$$
A flat connection on the punctured sphere, once one trivializes at the base points, gives corresponding
holonomies $\widetilde{C}_i,\, B_i$, satisfying:
$$\widetilde{C}_1^{-1} B_2^{-1} \widetilde{C}_2^{-1}B_2\cdots
B_i^{-1} \widetilde{C}_i^{-1}B_i\cdots
B_k^{-1} \widetilde{C}_k ^{-1}B_k\,=\,1.$$
The gluing, on the level of connections matches $C_i$ with $\widetilde{C}_i^{-1}$, and so 
\begin{equation}
C_1 B_2^{-1}C_2B_2\dots
B_i^{-1}C_iB_i\dots
B_k^{-1}C_k B_k\,=\,1.
\label{conjclass}\end{equation}
So one needs to be able to find matrices $B_i$ which make this relation true. In short, given $k$ conjugacy
classes, we have to be able to find elements in them whose product is one. There are choices for which this is not the case; for example, if one takes $C_1\,=\,\cdots \,=\,C_{k-1}\, = \,1,\, C_k \,=\, -1$.

The general question of when Equation \eqref{conjclass} has a solution is known as the Deligne-Simpson Problem, 
which is only solved when $G$ is of type $A_n$ \cite{Simp, Ko, Cr}.

The map $\Phi\, :\, \X_{\Sigma_2}(G) \,\longrightarrow\, \X_{\Sigma_1}(G)$ is not surjective.

If $g$ is at least two, $\X_{\Sigma_2}(G)$ has dimension $(k-2)\dim (G)$ greater than $\X_{\Sigma_1}(G)$. This 
would then be the dimension of the generic fibres when there is a solution to Equation \eqref{conjclass}.

\subsection{Case 4: Capping one circle with a genus $1$ curve, $n_1\geq 2$}

Now let us glue a punctured genus $1$ curve with one boundary circle $\widetilde \gamma$ to the boundary circle 
$\gamma_1$ of $\Sigma_1$, when $n_1\,\geq\, 2$. The resulting $\Sigma_2$ will have $1$ less boundary components, and 
genus $1$ more.

On the (closed) genus $1$ curve, take a standard basis $\alpha,\, \beta$ of the fundamental group. In the 
fundamental group of the punctured curve: $$\widetilde \gamma \alpha\beta \alpha^{-1} \beta^{-1} \,=\, 1.$$

\begin{lemma}\label{lem1}
Take $G$ as before.
Every element of the group $[G,\, G]$ can be expressed as $$[A,\, B]\, :=\, ABA^{-1} B^{-1}$$ for some
$A,\, B\, \in\, [G,\, G]$. Thus elements of the commutator subgroup can be written as a single commutator.
\end{lemma}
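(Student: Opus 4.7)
The plan is to reduce this to a classical commutator theorem for connected semisimple Lie groups. In both settings considered in the paper, $G$ is connected, and its derived subgroup $H := [G,G]$ is a closed connected \emph{semisimple} subgroup: compact semisimple when $G$ is compact, and semisimple complex algebraic when $G$ is complex reductive. Thus the lemma reduces to the single assertion that every element of a connected semisimple Lie group (compact or complex) is expressible as a single commutator of elements of that same group.

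This last statement is classical, due in the compact case to Goto (1949), and extended to the connected complex semisimple case by Pasiencier--Wang and (in the general real semisimple case) by Ree. Applying it to $H = [G,G]$ gives $A, B \in [G,G]$ with $[A,B] = g$ for any prescribed $g \in [G,G]$, which is exactly the conclusion of the lemma.

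The main conceptual content is therefore the cited commutator theorem itself. The standard proof of it proceeds by first showing, via conjugation together with the Jordan decomposition (or, in the compact case, the conjugacy of maximal tori), that it suffices to write every element of a maximal torus $T \subset H$ as a commutator. One then represents such a torus element as $[A,B]$ with $B \in T$ and $A \in N_H(T)$ projecting to a Weyl group element whose action on $T$ has sufficiently large image on each simple factor of $H$ — for instance an element acting as inversion on a rank-one subtorus, which embeds in an $\mathrm{SU}(2)$ or $\mathrm{SL}(2,\C)$ subgroup where the commutator identity $[A,B] = t$ is checked by an elementary computation.

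The main obstacle, which we do not reprove, is establishing the classical Goto/Ree-type theorem. Everything in the reduction above takes place inside $H = [G,G]$, so both $A$ and $B$ automatically land in $[G,G]$, as required. In particular, one does \emph{not} need to appeal to any larger ambient group or to any Deligne--Simpson-type existence result, in contrast to the $k$-holed sphere case discussed just above the lemma.
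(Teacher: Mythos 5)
Your proposal is correct and follows essentially the same route as the paper: both arguments note that $[G,G]$ is a connected semisimple group (compact, respectively complex algebraic) and then invoke the classical single-commutator theorems of Got\^o in the compact case and of Pasiencier--Wang and Ree in the complex reductive case. The additional sketch you give of the proof of the classical theorem is accurate but not needed, since the paper likewise treats it as a black box via citation.
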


\begin{proof}
First assume that $G$ is compact. Then $[G,\, G]$ is a connected compact semisimple Lie group. A theorem
of Got\^o says that for every element $C\, \in\, [G,\, G]$, there are elements $A,\, B\, \in\, [G,\, G]$
such that $C\,=\, [A,\, B]\, :=\, ABA^{-1}B^{-1}$ \cite[p.~270, Lemma]{Got}.

Next assume that $G$ is a reductive affine algebraic group defined over $\mathbb C$. Then
$[G,\, G]$ is a connected semisimple affine algebraic group defined over $\mathbb C$.
Then for every element $C\, \in\, [G,\, G]$, there are elements $A,\, B\, \in\, [G,\, G]$
such that $C\,=\, [A,\, B]\, :=\, ABA^{-1}B^{-1}$ \cite[p.~908]{PW}, \cite[p.~457]{Re}.
\end{proof}

We then have:

\begin{proposition}
The image in $\X_{\Sigma_1}(G)$ of $\X_{\Sigma_2}(G)$ consists of the representations with the image of
$\gamma_1$ in $[G,\,G]$. In particular, if $G$ is semisimple, meaning $G\,=\, [G,\,G]$, then the map
$\X_{\Sigma_2}(G)\, \longrightarrow\, \X_{\Sigma_1}(G)$ is surjective.
\end{proposition}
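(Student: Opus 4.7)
The plan is to analyze $\Phi$ via van Kampen's theorem applied to the decomposition $\Sigma_2\,=\,\Sigma_1\cup_{\gamma_1} T$, where $T$ denotes the punctured genus one surface being attached along $\gamma_1\,=\,\widetilde{\gamma}$. This presents
\[
\pi_1(\Sigma_2,\,x_1)\,\cong\,\pi_1(\Sigma_1,\,x_1)\ast_{\langle \gamma_1\rangle}\pi_1(T,\,x_1),
\]
with $\pi_1(T,\,x_1)\,=\,\langle \alpha,\,\beta\rangle$ free of rank two, and amalgamation relation $\gamma_1\,=\,[\beta,\,\alpha]^{\pm 1}$ (the sign depending on the orientation of the identification of $\gamma_1$ with $\widetilde{\gamma}$) coming from the defining relation $\widetilde{\gamma}\alpha\beta\alpha^{-1}\beta^{-1}\,=\,1$. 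Consequently, specifying a homomorphism $\rho_2\,:\,\pi_1(\Sigma_2,\,x_1)\,\longrightarrow\, G$ extending a given $\rho_1\,:\,\pi_1(\Sigma_1,\,x_1)\,\longrightarrow\, G$ amounts to choosing a pair $A,\,B\,\in\, G$ satisfying $\rho_1(\gamma_1)\,=\,[B,\,A]^{\pm 1}$.

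For the forward inclusion, if $[\rho_1]\,\in\,\Phi(\X_{\Sigma_2}(G))$ is the restriction of some $\rho_2$, then $\rho_1(\gamma_1)\,=\,[\rho_2(\beta),\,\rho_2(\alpha)]^{\pm 1}$ is a single commutator in $G$, and hence lies in $[G,\,G]$. For the reverse inclusion, given $\rho_1$ with $\rho_1(\gamma_1)\,\in\,[G,\,G]$, Lemma \ref{lem1} furnishes $A,\,B\,\in\,[G,\,G]$ with $\rho_1(\gamma_1)\,=\,[B,\,A]^{\pm 1}$; the universal property of the amalgamated free product then yields a homomorphism $\rho_2$ extending $\rho_1$ and sending $\alpha\,\longmapsto\,A$, $\beta\,\longmapsto\,B$. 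The surjectivity statement for semisimple $G$ follows immediately, since $G\,=\,[G,\,G]$ renders the constraint on $\rho_1(\gamma_1)$ vacuous.

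The one bookkeeping point I would address carefully is reductivity: in the complex reductive setting, $\X_{\Sigma_2}(G)$ is built from reductive homomorphisms, and the $\rho_2$ manufactured above need not be reductive even when $\rho_1$ is. The remedy is to replace $\rho_2$ by its semisimplification $\rho_2^{\mathrm{ss}}$, which represents the same point in the GIT quotient $\X_{\Sigma_2}(G)$ and whose restriction to $\pi_1(\Sigma_1,\,x_1)$ still lies in the GIT class of $\rho_1$, since the $G$-character of a representation is determined by, and determines, that of its semisimplification. The substantive content of the proposition is therefore Lemma \ref{lem1}, which has already been proved; the remaining work reduces to the van Kampen identification above and this reductivity check.
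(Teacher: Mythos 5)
Your argument is correct and is essentially the proof the paper intends: the paper states this proposition immediately after Lemma \ref{lem1} with the relation $\widetilde{\gamma}\alpha\beta\alpha^{-1}\beta^{-1}=1$ already displayed, leaving exactly the van Kampen identification and the appeal to Lemma \ref{lem1} that you spell out. Your additional check that the extension can be replaced by its semisimplification without changing either GIT class is a legitimate point the paper glosses over, and you resolve it correctly.
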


If $G\, =\, [G,\,G]$, then by capping successively the boundaries of $\Sigma_1$ we get a symplectic manifold 
$\X_{\Sigma_2}(G)$ and a Poisson map $\X_{\Sigma_2}(G)\,\longrightarrow\, \X_{\Sigma_1}(G)$. If $g$ is at least two, each 
cap increases the genus by one and diminishes the number of punctures by one, and so adds $\dim(G)$ dimensions.

\subsection{Case 5: Capping with an $n$-punctured genus $1$ curve.}

We now glue a genus $1$ curve $C$ with $n$ boundary circles to the $n$ boundary circles of $\Sigma_1$ so that the resulting curve 
$\Sigma_2$ has no punctures, but the genus is increased by $n$. Again, for $G$ semisimple, $g_1\geq 2$, this increases the dimensions of the representation space by $n\dim{G}$.

We have the fundamental groups
$$
\pi_1(\Sigma_1)\,=\, \langle a_1,\, b_1,\, \dots,\, a_{g_1},\, b_{g_1},\, c_1,\, \dots,\, c_n\ \Big\vert\ \prod_{i=1}^{g_1} [a_i,\, b_i]\prod_{j=1}^n c_j\rangle $$
where $g_1$ is the genus of $\Sigma_1$, and 
$$
\pi_1(C)\,=\, \langle a,\, b,\, c_1,\, \dots,\, c_n\ \Big\vert\ [a,\, b]\prod_{j=1}^n c_j\rangle.$$
We then have similar relations for their representations into $G$.
 
\begin{proposition}
The map $\Phi\,:\, \X_{\Sigma_2}(G)\,\longrightarrow\,\X_{\Sigma_1}(G)$ is surjective and Poisson.
\end{proposition}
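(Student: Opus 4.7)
The Poisson claim is immediate from Theorem \ref{thm1}: removing the boundary of $\Sigma_1$ does not change its character variety, and the resulting open surface embeds as a connected open subset of the closed surface $\Sigma_2$; under this identification $\Phi$ is precisely the restriction map on flat connections, hence Poisson. Equivalently one could invoke Corollary \ref{cor:poisson} on the level of fundamental group representations.

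For surjectivity, the plan is to extend representations directly, generalizing the strategy implicit in Case 4 to all $n$ boundary curves at once. First I would apply van Kampen to the decomposition $\Sigma_2 \,=\, \Sigma_1 \cup_\partial C$; after the orientation-reversing identifications $\tilde c_j \,\sim\, c_j^{-1}$ coming from the gluing (up to conjugation by connecting paths from the base point), this yields the presentation
$$\pi_1(\Sigma_2) \,\cong\, \Big\langle a_1,\, b_1,\, \dots,\, a_{g_1},\, b_{g_1},\, a,\, b,\, c_1,\, \dots,\, c_n \ \Big\vert\ \prod_{i=1}^{g_1}[a_i,\, b_i]\prod_{j=1}^n c_j,\ \ [a,\, b]\prod_{j=1}^n c_j^{-1}\Big\rangle.$$
Given $[\rho_1]\in \X_{\Sigma_1}(G)$, I would define $\rho_2$ to agree with $\rho_1$ on $a_i,\, b_i,\, c_j$. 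The second relation then forces
$$[\rho_2(a),\, \rho_2(b)] \,=\, \rho_1(c_1 c_2 \cdots c_n) \,=\, \Bigl(\prod_{i=1}^{g_1}[\rho_1(a_i),\, \rho_1(b_i)]\Bigr)^{-1},$$
where the second equality uses the defining relation of $\pi_1(\Sigma_1)$. This element manifestly lies in $[G,\,G]$, so Lemma \ref{lem1} (Got\^o's theorem in the compact case, and its algebraic analogue in the complex reductive case) supplies $A,\, B\, \in\, [G,\,G]$ realizing it as a single commutator $[A,\,B]$; setting $\rho_2(a) \,=\, A$ and $\rho_2(b) \,=\, B$ yields a homomorphism $\rho_2\colon \pi_1(\Sigma_2)\to G$ extending $\rho_1$, so $\Phi([\rho_2]) \,=\, [\rho_1]$.

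The hard part is concentrated in the algebraic input of Lemma \ref{lem1}; once that is invoked the remaining content is formal. Strictly speaking I would still verify that $\rho_2$ represents a class in $\X_{\Sigma_2}(G)$, which is automatic in the compact case and which in the complex reductive case follows from the GIT description $\X_{\Sigma_2}(G) \,\cong\, \hm(\pi_1(\Sigma_2),\,G) \git G$ of Section \ref{se2}: every representation descends to a point of the GIT quotient, and the equality $\Phi([\rho_2]) \,=\, [\rho_1]$ is immediate from the fact that $\rho_2$ restricts to $\rho_1$. A secondary bookkeeping task is tracking the conjugating paths in the identifications $\tilde c_j \sim c_j^{-1}$, but these only alter $\rho_2$ by inner automorphisms of $G$ and so have no effect on the character variety.
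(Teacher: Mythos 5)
Your argument is essentially the paper's: the Poisson claim is quoted from Theorem \ref{thm1}, and surjectivity is obtained by observing that $\rho_1(c_1\cdots c_n)=\bigl(\prod_{i=1}^{g_1}[\rho_1(a_i),\rho_1(b_i)]\bigr)^{-1}$ lies in $[G,G]$ and invoking Lemma \ref{lem1} to write it as a single commutator $[A,B]$, which furnishes the extension over the capping curve. The one genuine flaw is in the detail you added that the paper leaves implicit: the presentation you write for $\pi_1(\Sigma_2)$ is wrong when $n\geq 2$. Gluing along $n$ circles is one amalgamated product followed by $n-1$ HNN extensions, so $\pi_1(\Sigma_2)$ requires $n-1$ additional stable letters $t_2,\dots,t_n$, the second relation reading $[a,b]\,\tilde c_1\tilde c_2\cdots \tilde c_n=1$ with $\tilde c_1$ conjugate to $c_1^{-1}$ and $\tilde c_j=t_jc_j^{-1}t_j^{-1}$ for $j\geq 2$; your presentation has deficiency $2g_1+n$, whereas the closed genus $g_1+n$ surface group has deficiency $2g_1+2n-1$, so they agree only for $n=1$ (one can also see the mismatch on abelianizations). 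The repair is immediate and does not disturb your main step: set $\rho_2(t_j)=1$, note that the element which $[\rho_2(a),\rho_2(b)]$ is then forced to equal still lies in $[G,G]$ (it agrees with $\bigl(\prod_i[A_i,B_i]\bigr)^{-1}$ in the abelianization, and $[G,G]$ is normal, so the reorderings and conjugations coming from the connecting paths are harmless), and apply Lemma \ref{lem1} exactly as you do; equivalently, the homomorphism you construct is defined on the quotient of $\pi_1(\Sigma_2)$ killing the stable letters and hence pulls back to $\pi_1(\Sigma_2)$. With that correction your proof coincides with the paper's.
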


\begin{proof}
For the surjectivity, for a representation of $\pi_1(\Sigma_1)$ into $G$, we have images $A_i,\, B_i,\, C_j$ of the 
generators $a_i,\, b_i,\, c_j$ giving an element $ \prod_{i=1}^{g_0} [A_i,\, B_i]$ of the commutator subgroup of $G$. 
Now Lemma \ref{lem1} tells us that this is equal to a single commutator $[A,\,B]$, giving a representation $A,\,B,
\,C_j$ of $\pi_1(C)$. Inverting, this can be glued to the representation of $\pi_1(\Sigma_1)$ to obtain a
representation of $\pi_1(\Sigma_2)$. Consequently, the map $\Phi\,:\, \X_{\Sigma_2}(G)
\,\longrightarrow\,\X_{\Sigma_1}(G)$ is surjective. We have already seen that $\Phi$ is Poisson.
\end{proof}

Thus we have a symplectic ``completion'' of $\X_{\Sigma_1}(G)$.

\subsection{Case 6: Capping with a mirror image.}

We note that we can also glue a copy of $\Sigma_1$ to itself. Again, the resulting $\Sigma_2$ will give a 
symplectic character variety mapping surjectively onto that of $\Sigma_1$, albeit with an enormous redundancy; the 
dimension gets doubled. We note that for the representations on one of the copies of $\Sigma_2$, we must invert 
the matrices $A_i,\, B_i,\, C_j$ before gluing. This involves changing the generating set of the fundamental group 
somewhat, but it can be done.

\section{Gluing via symplectic quotients}\label{symquot}

The symplectic extensions obtained by capping boundary components are, as we have seen, often somewhat inefficient in terms of the dimensions they add. We close by recalling two classical constructions which accomplish the same task, essentially by adding a trivialisation to our connections on a base point on each boundary circle. The first, due to Alekseev, Malkin and Meinrenken \cite{AMM}, takes us outside of the symplectic domain, into quasi-Hamiltonian territory; the second involves the extended moduli spaces of Jeffrey \cite{J}. The reduction to the flat connection spaces are then group quotients. We note that there is an ``imploded'' version of the construction of \cite{AMM}, considered in \cite{HJ}.

\subsection{q-Hamiltonian spaces}

$q$-Hamiltonian spaces were defined by Alekseev, Malkin and Meinrenken in \cite{AMM}. They play a role analogous to the symplectic quotient of the space of all connections
by the based gauge group (in other words the extended moduli space \cite{J}).

We quote the following definitions from \cite{AMM}.

\begin{definition}[{\cite[Definition 2.2]{AMM}}]
Let $G$ be a compact Lie group. Assume further that $G$ is connected and simply 
connected. A quasi-Hamiltonian (or $q$-Hamiltonian) $G$-space is a $G$-manifold 
$M$ together with an invariant 2-form $\omega \,\in\, \Omega^2(M)^G$ and a $G$-equivariant map $\mu\,\in\,C^\infty(M,\,G)$ (where $G$ acts on itself by conjugation) such that:
\begin{enumerate}
\item[(B1)] The differential of $\omega$ is given by
$$ d \omega \,=\, - \mu^* \chi,$$ where $\chi$ is the closed bi-invariant 3-form on $G$
given by $\chi\,  =\, \frac{1}{12}(\theta,\, [\theta,\, \theta]).$
Here  $\theta \in \Omega^1(G) \otimes \lieg$  is the left invariant Maurer-Cartan form (where $\lieg$ is the Lie algebra of $G$) and
$\overline{\theta}$ is the right invariant Maurer-Cartan form. 
 This is often denoted $\theta = g^{-1} dg$ if $g: U \to G $ is a coordinate on a coordinate chart $U$ for  $G$.
 Similarly $\bar{\theta}$ is denoted $dg g^{-1}$.

\item[(B2)] The map $\mu$ satisfies:
 $$i(v_\xi) \omega \,= \, \frac{1}{2} \mu^* (\theta + \overline{\theta},\, \xi),$$ where $v_\xi$ is the fundamental vector field on $G$ associated to an element $\xi$ in $\lieg$.

We have
$$i(v_\xi) \omega \,= \, \frac{1}{2} \mu^* (\theta + \overline{\theta},\, \xi).$$

\item[(B3)] At each $x\,\in\,M$, the kernel of $\omega_x$ is given by:
$${\rm ker} ( \omega_x) \,=\, {v_\xi,\,\ \xi \,\in\, {\rm Ker}
 ({\rm Ad}_{\mu(x)} +1)}. $$
 \end{enumerate}
\end{definition}

We will refer to $ \mu$ as a moment map.

The $q$-Hamiltonian space whose $q$-Hamiltonian quotient is the 
character variety for a 2-manifold of genus $g$ with no
boundary 
is the space denoted $M_g$, which is isomorphic to $G^{2g}$ .
The moment map for the diagonal action of 
$G$ on $M_g$ by conjugation is the product of commutators
\begin{equation} \label{moment1}\mu(x_1,\, x_2,\, \dots,\, x_{2g})\,=\, \prod_{j = 1}^g x_{2j-1} x_{2j} x_{2j-1}^{-1}
x_{2j}^{-1}. \end{equation}
Here $x_j \in G$.

The $q$-Hamiltonian space whose
 $q$-Hamiltonian quotient is the character variety for a $2$-manifold which has genus $g$ and $r$ boundary components is
the space
\begin{equation} \label{mgrdef} M_{g,r}:= G^{2g+2r}. \end{equation}  See Section \ref{s:fusion} below.

If $X_1$ and $X_2$ are two $q$-Hamiltonian $G$-spaces with moment maps $\mu_1$ and $\mu_2$,
then the fusion product $X_1 \times X_2$ is also a $q$-Hamiltonian $G$-space with moment map $\mu_1 \cdot \mu_2$, where
$\cdot$ is multiplication in $G$. See for example \cite{Mein}, \S3.2.
This is analogous to the fact that the product of two Hamiltonian $G$-spaces $Y_1$ and $Y_1$ is also a Hamiltonian $G$-space and its moment map is the sum of the 
moment maps for the Hamiltonian $G$ actions on $Y_1$ and $Y_2$.

It follows that if $M_g$ is the $q$-Hamiltonian space
associated to a genus $g$ surface $\Sigma_g$ 
and $M_h$ is the $q$-Hamiltonian space associated with a 
genus $h$ surface $\Sigma_h$, then 
the  $q$-Hamiltonian quotient of $M_g \times M_h$ by the diagonal 
action of $G$ is a symplectic manifold. It is the character
variety of a surface of genus $g+h$ without boundary.

This quotient construction is defined in Section \ref{s:fusion} below.  For completeness, we include the following material from Section 6 of \cite{AMM}. 
The 2-form on the double $D(G):=G\times G$ is given in \cite[Section 3.2]{AMM}:
$$ \omega_D \,=\, \frac{1}{2} (a^* \theta, \,
b^* \overline{\theta}) + \frac{1}{2} (a^* \overline{\theta},\, b^* {\theta}), $$
where $a,\, b\,: \, G\times G \,\longrightarrow\,G$ are  projections to the first and second factors respectively.
Here $\theta, \bar{\theta}$  were introduced in Definition 6.1.

Now let us consider the construction for a surface of genus $g$ and 
$r+1$ boundary components ($r \ge 0 $).
This construction creates a 2-form on a space called
the internal  fusion of the double and denoted  ${\bf D}(G) \,=\, G \times G $ (\cite[Example 6.1]{AMM}):
$$ \omega\, =\, \frac{1}{2} ( a^* \theta,\, b^* \overline{\theta}) + \frac{1}{2} ( a^* \overline{\theta},\, b^* {\theta})
+\frac{1}{2} ( (ab)^* \theta,\, (a^{-1}b^{-1})^* \overline{\theta}). $$

The space $M_{g,r} \,=\, G^{2g+2r}$ with coordinates $ (a_i,\, b_i,\, u_j ,\, v_j)$
for $i\, =\,1,\, \dots,\, g$ and $j \,= 1,\, \dots,\, r$.
The action of $(z_0,\, \dots ,\, z_r)\,\in\, G^{r+1} $ is given by (see \cite{AMM}, Equation (38))
\begin{equation} \label{thirty-eight} a_i\,\longmapsto\, {\rm Ad}_{z_0} a_i, \end{equation}
$$ b_i\,\longmapsto\, {\rm Ad}_{z_0} b_i ,$$
$$u_j \,\longmapsto\, z_0 u_j z_j^{-1} $$
$$v_j \,\longmapsto\, {\rm Ad}_{z_j} v_j$$
(for $i,j$ in the ranges listed above).
Here, we often use $a$ to  denote the tuple $(a_1, \dots, a_g)$
(similarly for $b$). We also use $u$ to denote $(u_1, \dots, u_r)$ (similarly for $v$).

Our earlier notation $M_g$    is an abbreviation for $M_{g,1}$.

The components of the moment map $\mu$ are (see \cite{AMM}, Equation (39))
\begin{equation} \label{thirty-nine} \mu_j (a,\, b,\, u\,, v)\, = \,(v_j)^{-1} ~~~ (j \,= \,1,\, \dots,\, r)
\end{equation}
$$ \mu_0(a,\, b,\, u,\, v) \,= \,{\rm Ad}_{u_1} (v_1)\cdots
{\rm Ad}_{u_r} (v_r) [a_1,\, b_1] \cdots [a_g,\, b_g].
$$

We point out to  the reader that the capping constructions  described in the previous section are special cases
of the quotient of $q$-Hamiltonian spaces described in this section (see Remarks (\ref{r1}) and (\ref{r2})).

For example:
\begin{enumerate}
\item \S5.2: Case 2 corresponds to the $q$-Hamiltonian quotient of $M_{g,2}  \times M_{0,2}$ by the diagonal action of $G \times G$.
\item \S5.3: Case 3 corresponds to the $q$-Hamiltonian quotient of $M_{g,k} \times M_{0,k} $ by the diagonal action of $G^k$.
\item \S5.4: Case 4 corresponds to the $q$-Hamiltonian quotient of $M_{g,1} \times M_{1,1} $ by the diagonal action of $G$.
\item \S5.5: Case 5 corresponds to the $q$-Hamiltonian quotient of $M_{g,n} \times M_{1,n} $ by the diagonal action of $G^n$.
\item \S5.6: Case 6 corresponds to the $q$-Hamiltonian quotient of $M_{g,n} \times M_{g,n} $ by the diagonal action of $G^n$.

\end{enumerate}
\subsection{Fusion} \label{s:fusion}

We continue to assume that $G$ and $H$ are compact, connected, simply connected Lie groups.

\begin{theorem}[{\cite[Theorem 6.1]{AMM}}]
Let $M$ be a $q$-Hamiltonian $(G \times G \times H)$-space
with moment map $(\mu_1,\, \mu_2,\, \mu_3)$.
Let $(M, \omega)$ be a $q$-Hamiltonian $(G \times G \times H)$-space
 with moment map $(\mu_1,\, \mu_2,\, \mu_3)$, so $\mu_1: M \to G$ and
 $\mu_2: M \to G$, while $\mu_3: M \to H$.  Let $G \times H$ act
by the diagonal embedding $(y,\, z)\, \longmapsto\, (y,\,y,\,z)$.
 Then $M$ with $2$-form 
$$ \omega + (\mu_1^* \theta,\, \mu_2^* \overline{\theta} )/2$$
and moment map $\widetilde{\mu} \,=\, (\mu_1 \cdot \mu_2, \,\mu_3)\,:\, M \,\longrightarrow\, G \times H$
is a $q$-Hamiltonian $(G \times H)$-space.
 Here $ \cdot $ denotes  multiplication  in $G$.

Internal fusion means replacing the $(G \times G \times H)$-action 
on a $q$-Hamiltonian $G\times G \times H$-space with a $(G \times H)$-action. 
The space remains the same but the group that acts on it is different.
The 2-form also changes: see \S3.2 of \cite{Mein}.
\end{theorem}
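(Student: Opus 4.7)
The task is to verify the three axioms (B1)--(B3) for the modified pair $(\widetilde\omega,\widetilde\mu)$, where $\widetilde\omega=\omega+\eta$ with $\eta:=\tfrac12(\mu_1^*\theta,\mu_2^*\overline\theta)$ and $\widetilde\mu=(\mu_1\cdot\mu_2,\,\mu_3)$. The single technical input that drives the whole argument is a Polyakov--Wiegmann type identity for the Cartan 3-form under group multiplication: if $m\colon G\times G\to G$ is the product map and $p_1,p_2$ the projections, then
\[
m^*\chi \;=\; p_1^*\chi + p_2^*\chi + \tfrac{1}{2}\,d\bigl(p_1^*\theta,\;p_2^*\overline\theta\bigr).
\]
This is a direct Maurer--Cartan calculation using the relation $(gh)^{-1}d(gh)=h^{-1}dh+\mathrm{Ad}_{h^{-1}}(g^{-1}dg)$.

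For axiom (B1), pulling back the displayed identity along $(\mu_1,\mu_2)\colon M\to G\times G$ yields $(\mu_1\cdot\mu_2)^*\chi=\mu_1^*\chi+\mu_2^*\chi+d\eta$. Combining this with the original $d\omega=-\mu_1^*\chi-\mu_2^*\chi-\mu_3^*\chi$ gives $d\widetilde\omega=d\omega+d\eta=-(\mu_1\cdot\mu_2)^*\chi-\mu_3^*\chi=-\widetilde\mu^*\chi_{G\times H}$, which is precisely (B1) for the fused data. For axiom (B2), write $\xi=(\xi_G,\xi_H)\in\mathrm{Lie}(G\times H)$ and let $v_\xi^{\mathrm{new}}=v^{(1)}_{\xi_G}+v^{(2)}_{\xi_G}+v^{(3)}_{\xi_H}$ be the fundamental vector field of the diagonal action. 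Sum the three original moment map identities $i(v^{(i)}_{\xi_\bullet})\omega=\tfrac12\mu_i^*(\theta+\overline\theta,\xi_\bullet)$, and then contract $v_\xi^{\mathrm{new}}$ into the correction term $\eta$. Using the equivariance formulas $i(v^{(i)}_{\xi_G})\mu_i^*\theta=(\mathrm{Ad}_{\mu_i^{-1}}-1)\xi_G$ and $i(v^{(i)}_{\xi_G})\mu_i^*\overline\theta=(1-\mathrm{Ad}_{\mu_i})\xi_G$, elementary algebra converts the sum into $\tfrac12(\mu_1\cdot\mu_2)^*(\theta+\overline\theta,\xi_G)+\tfrac12\mu_3^*(\theta+\overline\theta,\xi_H)$, matching $\tfrac12\widetilde\mu^*(\theta_{G\times H}+\overline\theta_{G\times H},\xi)$.

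The main obstacle is axiom (B3): the characterization of $\ker\widetilde\omega_x$. Here I would proceed by double inclusion. The straightforward inclusion is that any $v_\xi^{\mathrm{new}}$ with $\xi\in\ker(\mathrm{Ad}_{\widetilde\mu(x)}+1)$ lies in $\ker\widetilde\omega_x$, which follows from the moment map computation just completed once one checks that the relevant combinations of $\theta+\overline\theta$ vanish when paired against such $\xi$. For the reverse inclusion, suppose $v\in\ker\widetilde\omega_x$. The key is to decompose $v$ using the original (B3): the tangent space at $x$ is spanned by $\ker\omega_x$ and a complement, and one translates $\widetilde\omega=\omega+\eta$ into a modified kernel condition of the form ``$(\mathrm{Ad}_{\mu_1(x)}+1)$ and $(\mathrm{Ad}_{\mu_2(x)}+1)$ must vanish on coupled vectors,'' then repackages this using $\mathrm{Ad}_{\mu_1(x)\mu_2(x)}=\mathrm{Ad}_{\mu_1(x)}\mathrm{Ad}_{\mu_2(x)}$ so that the cross-terms produced by $\eta$ assemble into the single condition $\xi\in\ker(\mathrm{Ad}_{(\mu_1\cdot\mu_2)(x)}+1)$ on the diagonal. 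This repackaging is delicate because one must track which variations come from the first versus second copy of $G$ and ensure no extra kernel directions appear. Once (B1)--(B3) are established, the theorem follows.
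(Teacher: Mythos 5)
The paper does not prove this statement at all: it is quoted verbatim as \cite[Theorem 6.1]{AMM} and used as a black box, so there is no internal proof to compare against. Measured against the actual argument in \cite{AMM}, your outline has the right architecture --- a Polyakov--Wiegmann type identity for the Cartan $3$-form under multiplication to handle (B1), a contraction computation for (B2), and a kernel analysis for (B3) --- but two points need repair before this is a proof rather than a plan.

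First, your (B1) step is internally inconsistent as written. From your identity $(\mu_1\cdot\mu_2)^*\chi=\mu_1^*\chi+\mu_2^*\chi+d\eta$ together with $d\omega=-\mu_1^*\chi-\mu_2^*\chi-\mu_3^*\chi$ one gets $d\widetilde\omega=d\omega+d\eta=-(\mu_1\cdot\mu_2)^*\chi+2\,d\eta-\mu_3^*\chi$, not $-(\mu_1\cdot\mu_2)^*\chi-\mu_3^*\chi$; the cancellation you assert requires the opposite sign, $m^*\chi=p_1^*\chi+p_2^*\chi-\tfrac12 d\bigl(p_1^*\theta,\,p_2^*\overline\theta\bigr)$, which is the form of the identity actually used in \cite{AMM}. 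This is a fixable sign error, but as stated the displayed conclusion does not follow from the displayed hypothesis. Second, and more substantively, (B3) is precisely the delicate axiom and you have only described an intention (``decompose $v$\dots repackage\dots ensure no extra kernel directions appear'') rather than carried it out; note also that the original (B3) characterizes $\ker\omega_x$ only in terms of the full $(G\times G\times H)$-action, so ``decomposing $v$ using the original (B3)'' does not by itself control $\ker(\omega_x+\eta_x)$, since $\eta$ changes which vectors are degenerate. The standard route is either an explicit computation showing $\ker\widetilde\omega_x=\{v^{(1)}_{\xi}+v^{(2)}_{\xi}+v^{(3)}_{\zeta}\,:\,(\xi,\zeta)\in\ker(\mathrm{Ad}_{\widetilde\mu(x)}+1)\}$ using the (B2) identities you derived, or the reformulation of (B3) as a minimal-degeneracy condition, which reduces the reverse inclusion to a rank count. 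Until one of these is executed, the proof is incomplete at its hardest point. You should also record, in the (B2) step, that $\mu_1$ is invariant under the second $G$-factor and $\mu_2$ under the first, so that the cross-contractions of $\eta$ are the only nonzero correction terms.
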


\begin{example}[{\cite[Example 6.1]{AMM}}] Internal fusion turns 
the $q$-Hamiltonian $G\times G$-space $ D(G)$ into a $q$-Hamiltonian $G$-space denoted 
${\bf D}(G)$. 
\end{example}

The space $D(G)\,=\, G \times G $ has coordinates $(u,\, v) \,\in\, G \times G$. There may be $r$ copies of the double which are indexed by variables $u_j, v_j,$ $ j  = 1, \dots, r$.

 The internal fusion
${\bf D}(G)$ is also $G \times G$ with coordinates $(a,\, b)$ ($a,b \in G$). There may be $g$ copies of the internal fusion, which are denoted $(a_i, b_i)$ $(i = 1,\,  \dots\, , g)$. 
The space $D(G)$ is a $q$-Hamiltonian $(G \times G)$-space with moment maps $(v_j^{-1},\, {\rm Ad}_{u_j} v_j)$,
while ${\bf D}(G) $ is a $G$-space with moment map $[a_i,\, b_i]$.

A quasi-Poisson manifold is a special type of  $q$-Hamiltonian space  (see \cite{AKM}).
This is in fact the type of $q$-Hamiltonian space that we reduce to construct character varieties (see \cite{AKM}).  
For relations between $q$-Hamiltonian spaces and Poisson geometry, we refer the reader to \cite{AMM}.

The fusion product of  $ (D(G))^r $ and $\left ( {\bf D}(G) \right) ^g $ is $G^{2g+2r}$ with the $q$-Hamiltonian 
action of $G^{r+1}$ given by Equation \eqref{thirty-eight} and moment maps given by Equation \eqref{thirty-nine}. If we take the 
$q$-Hamiltonian  quotient of this space with respect to the $G^{r+1}$ action, we  obtain a symplectic manifold.  The $2$-form $\omega$ on 
$D(G)^r \times \left ( {\bf D}(G)\right )^g  $ restricts on the level set of moment maps to a form whose
quotient under the action of $G^{r+1} $ is  a symplectic form.  

\begin{remark} \label{r1}
For example, the $q$-Hamiltonian quotient of the product of the spaces $M_g$ and $M_h$ is the subset $$\{( x,\,y) \,\in\, M_g \times M_h\,\big\vert\, \mu_g(x) \,=\, \mu_h(y)\}/G, $$ where $G$ acts on $M_g \times M_h$ by conjugation and $\mu_g\,:\, M_g \,\longrightarrow\, G$ and $\mu_h\,:\, M_h\,\longrightarrow\, G$ are the respective $q$-Hamiltonian moment maps. Recall that $M_g$ and $M_h$ are equipped with $q$-Hamiltonian actions of $G$. The $q$-Hamiltonian 
quotient of $M_g \times M_h$ with respect to the diagonal action of $G$ is  the character variety of a surface of genus $g+h$. 
The  2-forms on $M_g$ and $M_h$ restrict to the level set
$$A:= \{(x,y) \in M_g \times M_h| \mu_g(x) = \mu_h(y) \},  $$ giving a 2-form which is the pullback of a symplectic form on  the quotient of $A$ by the $G$ action. 
  This procedure is the gluing procedure corresponding to gluing together the boundaries of two different surfaces, each with one boundary component.
\end{remark}

\begin{remark} \label{r2}
To take the $q$-Hamiltonian 
 quotient of the diagonal action of $G$ on the $j_1$-th and $j_2$-th copies of $G$, we set $\mu_{j_1}(a,\, b,\,u,\,v) \,=\, \mu_{j_2}(a,\,b,\,u,\,v)$ and 
then take the quotient by the diagonal action of $G$ on these copies of $G$. In other words, we require that $v_{j_1} \,=\, v_{j_2}.$  This operation corresponds to gluing together the $j_1$-th and  $j_2$-th boundary components of a connected surface with $r+1$ boundary components. This procedure is the gluing procedure corresponding to gluing together two boundary components of a connected surface.
Again, the $2$-form on the level set 
$$B:= \{ a,b,u, v | \mu_{j_1}(a,\, b,\,u,\,v) = \mu_{j_2}(a,\,b,\,u,\,v)\}$$ is the pullback of a symplectic form on the quotient of $B$  by the diagonal $G$ action.
\end{remark}

\subsection{Extended moduli spaces}

For reference, see \cite{J} and also \cite{GHJW,H,H2}).

Let $G \,=\,\SU(2)$. The extended moduli space \cite{J} for a genus $g$ surface $\Sigma_g$ with one boundary component was developed for the same purpose as the quasi-Hamiltonian $G$-space from \cite{AMM}. It has a symplectic structure and a Hamiltonian action of $G$. Its symplectic quotient (at a specific coadjoint orbit of $G$) is the moduli space of parabolic bundles associated to that orbit. It has real dimension $6g$.

\subsubsection{Extended moduli space for general $G$}

Let $G$ be a compact Lie group with maximal torus $T$.

We define 
\begin{equation}{\mathcal N}^{\lieg}\,=\,  \{(a_1,\,b_1,\, \dots,\, a_{g},\,b_g) \,\in\, G^{2g}, \Lambda \,\in\, \lieg\,\Big\vert\, 
 \prod_{j=1}^g [a_j,\, b_j] \,=\, \exp (\Lambda) \}. \label{e:ext}\end{equation}
The real dimension of this space is $2g\dim (G)$. 

\subsubsection{More than one  boundary component}

The generalization to $r$ boundary components (where $r \ge 2$) is given in 
(5.6) of \cite{J}:
\begin{equation} \label{enng}
{\mathcal N}^{\lieg, r} \,=\, \{ (a_1,\, \dots,\,a_g,\, b_1,\, 
\dots,\, b_g,\, k_2,\, \dots,\, k_r,\, \lambda_1,\, 
\dots,\, \lambda_r )\ \Big\vert\ a_j,\,b_j,\, k_j \,\in\, G,
\end{equation}
$$\lambda_i \,\in\, \lieg, 
\prod_{i=1}^g [a_{i}, \,b_{i}]\, =\, e^{\lambda_1} k_2 e^{\lambda_2} k_2^{-1}
\dots k_r e^{\lambda_r} k_r^{-1} \}. $$
There is a two-form defined on this space whose restriction to an  open dense subset is symplectic.
This subject may also be described in terms of connections on the 2-manifold rather than elements of products of $G$ -- see \cite{J}. 
The $(r \ge 2)$-boundary component case of the extended moduli space 
is also treated in \cite{HJ}. 

\subsection{Gauge theory version of extended moduli space}

There is a description of the extended moduli space
in terms of connections.

The space $\mathcal{A}_F^\lieg$ parametrizes connections $A \,\in\, \Omega^1 (\Sigma,\, \lieg)$ with curvature $F_A \,=\, 0 $ and 
$A $ having the form $\lambda ds $ on a neighbourhood of 
the boundary (where $s \,\in\, [0,\, 2 \pi]$ is a coordinate on the boundary and
$\lambda$ is a constant in $\lieg$). 
Then $\mathcal{M}_g(\Sigma)$ consists of the quotient of $A^\lieg_F$
by the group of gauge transformations equal to the identity on a 
neighbourhood of the boundary. 
The space $\mathcal{M}_g(\Sigma)$ is homeomorphic
to ${\mathcal{N}}^\lieg$ (see \cite[Proposition 2.5]{J}).

\subsubsection{Symplectic form of extended moduli space}

For the $\SU(2)$ extended
moduli space, the symplectic form is given in \cite{J}, Section 3.1
(see also Definition 3.2).
It is $$\omega_\Sigma (a,\,b)\, = \,\int_\Sigma Tr (a \wedge b), $$
where $a,\,b \,\in\, \Omega^1(\Sigma) \otimes \lieg$ are flat $G$-connections on $\Sigma.$
For general $G$, 
the symplectic form of this space also has this form (see Section 5.2 of
\cite{J}). 
This is 
a simple generalization of the symplectic form defined by 
Atiyah and Bott \cite{AB}.

Let $G\,=\,\SU(2)$. 
By \cite{J}, Proposition 3.1, 
the symplectic form on ${\mathcal M}^{\lieg}$ is nondegenerate provided 
$\Lambda \,= \,0 $ or $\Lambda \,\notin\, \pi {\Z} H$, 
where $H$ is the diagonal $2 \times 2$ matrix with 
entries $(\sqrt{-1},\,-\sqrt{-1})$. 
(a generator of the weight lattice of $G$). 

\subsubsection{Moment map for extended moduli space}

The moment map for the extended moduli space 
$\mathcal{N}^\lieg$ described above (corresponding to a surface of genus 
$g$ with one boundary component) is the map
$$ ((a_1,\, \dots,\, a_{2g}) \,\in\, G^{2g},\, \Lambda \in \lieg ) \,\longmapsto\,
\Lambda.$$

\subsection{Relation with Poisson geometry}

As above, let $G$ be a compact connected simply connected semisimple Lie group.
According to \S8 of \cite{AMM}, there is a bijective correspondence between $q$-Hamiltonian $G$-spaces
and Hamiltonian $LG$-spaces. The key result is Theorem 8.3 in that paper. The following section of 
\cite{AMM}, \S9, uses this machinery to exhibit
spaces of flat connections on oriented 2-manifold as symplectic quotient of Hamiltonian $LG$-spaces, or equivalently 
$q$-Hamiltonian quotients of $q$-Hamiltonian spaces. 

The extended moduli spaces of \cite{J} represent a choice of a gauge for the $LG$ variables. Any $\lieg$-valued 1-form $\alpha$ on $S^1$ is gauge equivalent to 
a 1-form of a particular type: $ \gamma^* \alpha = \lambda ds$ where $\lambda \in \lieg$ is a constant and $\gamma: S^1 \to G$ is a gauge transformation. 
By means of this construction, one no longer needs to use
 the infinite-dimensional group $LG$.  The price one pays is  the (non-canonical) choice of  a gauge. 

If one takes the symplectic quotient of such a Hamiltonian $LG$-space by $(LG)^{r+1}$ , one recovers the space of gauge
equivalence classes of flat connections on an oriented $2$-manifold, with 
fixed values of the holonomy around each boundary component. This means one fixes a value in $L \lieg$
(the Lie algebra of $LG$) for each of the $r+1$ boundary components of the surface.  This value represents the holonomy 
of the connection around that boundary component.
Taking the quotient by $(LG)^{r+1}$ 
gives a symplectic manifold.    

If one takes the quotient by $(LG)^{r+1} $ but does  not  fix the holonomy for the boundary components, the result is a Poisson manifold.
If one then fixes the holonomies around the $r+1$ boundary components, this gives a restriction map 
 from 
 this Poisson manifold to one of its symplectic leaves.

\section{Proof of Theorem \ref{thm:5holed}}\label{5holed}
The coordinate ring  $\C[\X_{n,g}(\SL(2,\C))]$ is finitely generated by traces of curves in $\Sigma_{n,g}$.   Since the Poisson bracket is a derivation and a Lie bracket, it is determined by the pairings of the generators of the coordinate ring.  

When $G=\SL(n,\C)$, Equation \eqref{eq:poisson} simplifies to:
\begin{align}\label{eq:poissonsl}
\{\tr(\rho(\alpha)),\,\tr(\rho(\beta))\}&\,=\,\sum_{p\in
\alpha\cap\beta}\epsilon(p,\,\alpha,\,\beta)\left(\tr(\rho(\alpha_p\beta_p))-\frac{1}{n}\tr(\rho(\alpha))\tr(\rho(\beta))\right).
\end{align}

As is apparent from this formula, to compute the requisite pairings, one need only draw the required curves and compute the traces of the resulting words. The algorithm in \cite{ABL} that computes traces of words in $\C[\X_{n,g}(\SL(2,\C))]$ is effective.   Therefore, the algorithm to compute the Poisson bracket is likewise effective.

We now demonstrate this algorithm with a new non-trivial, but tractable, example.  In principle, many other such examples could be computed with a fully automated implementation of our algorithm.  However, we do this computation by hand with the aid of a compute program implementing the algorithm in \cite{ABL}, and verify the computation is correct with {\it Mathematica}.

There are three open surfaces with Euler characteristic $-3$: the 5-holed sphere, the 3-holed torus, 
and the 1-holed genus 2 surface. In all three cases, the fundamental group is a free group $F_4\,=\,\langle 
c_1,\,c_2,\,c_3,\,c_4\rangle$ of rank 4. Let $t_{\{i\}}\,=\,\tr_{c_i}$, $t_{\{i,j\}}\,=\,\tr_{c_ic_j}$, and 
$t_{\{i,j,k\}}\,=\,\tr_{c_ic_jc_k}$.

Then, the coordinate ring of $\hm(F_4,\,\SL(2,\C))\git \SL(2,\C)$ is generated by the 14 trace functions 
$$\left\{t_{\{1\}},t_{\{2\}},t_{\{3\}},t_{\{4\}},t_{\{1,2\}},t_{\{1,3\}},t_{\{1,4\}},t_{\{2,3\}},t_{\{2,4\}},t_{\{3,4\}},t_{\{1,2,3\}},t_{\{1,2,4\}},t_{\{1,3,4\}},t_{\{2,3,4\}}\right\},$$ 
and its ideal of relations is generated by 14 polynomials in these variables (see \cite{ABL} for details).

To compute the bi-vector for the 5-holed sphere, we need to compute the $14\times 14\,=\,196$ pairings of the form $\{t_x,\,t_y\}$. Since Poisson brackets are anti-symmetric and $t_{\{1\}},$ $t_{\{2\}},$ $t_{\{3\}},$ $t_{\{4\}}$ are Casimirs, we are left with 45 pairings. These come in three types: (a) 15 of $\{t_{\{i,j\}},\,t_{\{k,l\}}\}$, (b) 24 of $\{t_{\{i,j\}},\,t_{\{k,l,m\}}\}$, and (c) 6 of $\{t_{\{i,j,k\}},\,t_{\{l,m,n\}}\}$.

The topological model of $\Sigma_{5,0}$ we will use is in Figure \ref{5holedmodel}, and
we will use Equation \eqref{eq:poissonsl} in the following subsections without explicit mention.

\begin{figure}[ht!]
\includegraphics[scale=0.15]{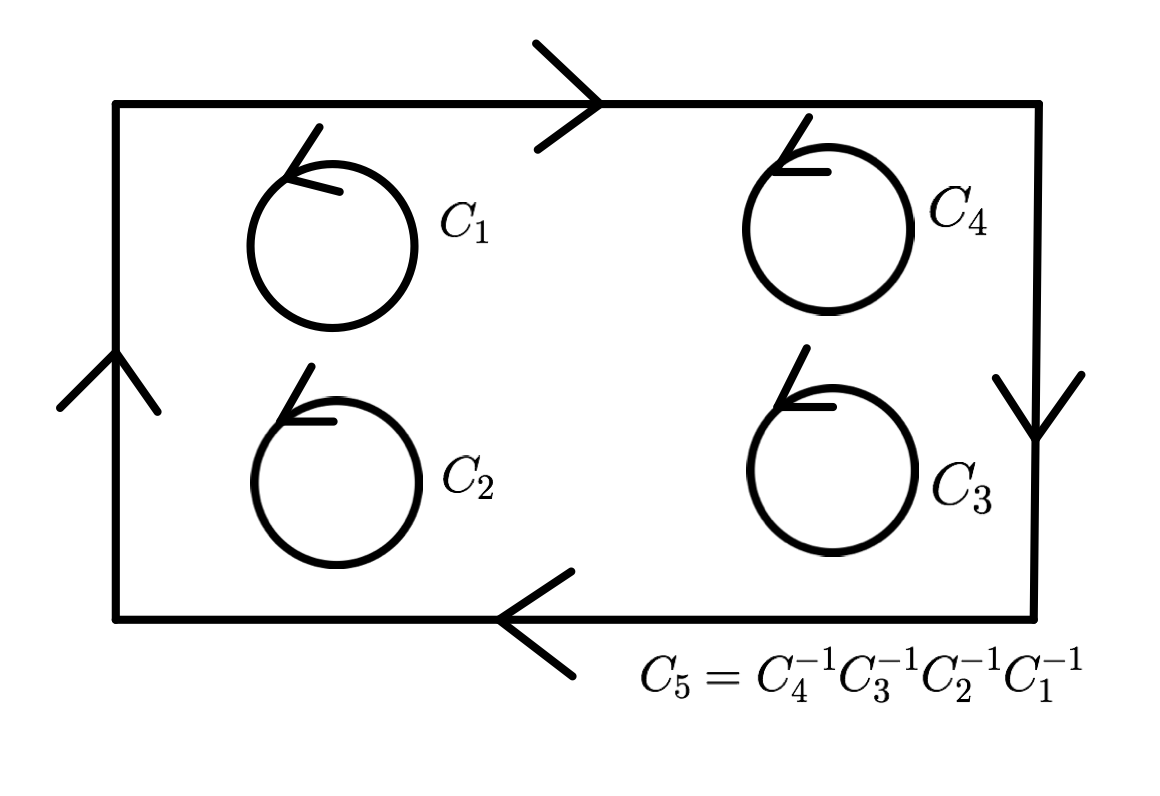} \caption{$\Sigma_{5,0}$ with $c_1c_2c_3c_4c_5=1$.}
\label{5holedmodel} 
\end{figure}

\subsubsection{Type $(a)$ Pairings} There are 15 pairings of type $\{t_{\{i,j\}},\,t_{\{k,l\}}\}$.

Since $c_1c_2$ and $c_3c_4$ are disjoint, $\{t_{\{1,2\}},\,t_{\{3,4\}}\}\,=\,0$.
Likewise, $\{t_{\{1,4\}},\,t_{\{2,3\}}\}\,=\,0$.

The first non-trivial\footnote{One might be tempted to thinking that these curves can be drawn disjoint by drawing 
$c_2c_4$ ``going out around'' $c_3$, but the resulting curve would be homotopic to $c_2c_3c_4c_3^{-1}$ which is 
{\it not} homotopic to $c_2c_4$.} computation will be for $\{t_{\{1,3\}},\,t_{\{2,4\}}\}$. We draw the curves 
$c_1c_3$ and $c_2c_4$ in the 5-holed sphere (see Figure \ref{1324}). Then we simplify the trace functions in the 
formula using the algorithm in \cite{ABL} implemented in {\it Mathematica}.

\begin{figure}[ht!]
\includegraphics[scale=0.15]{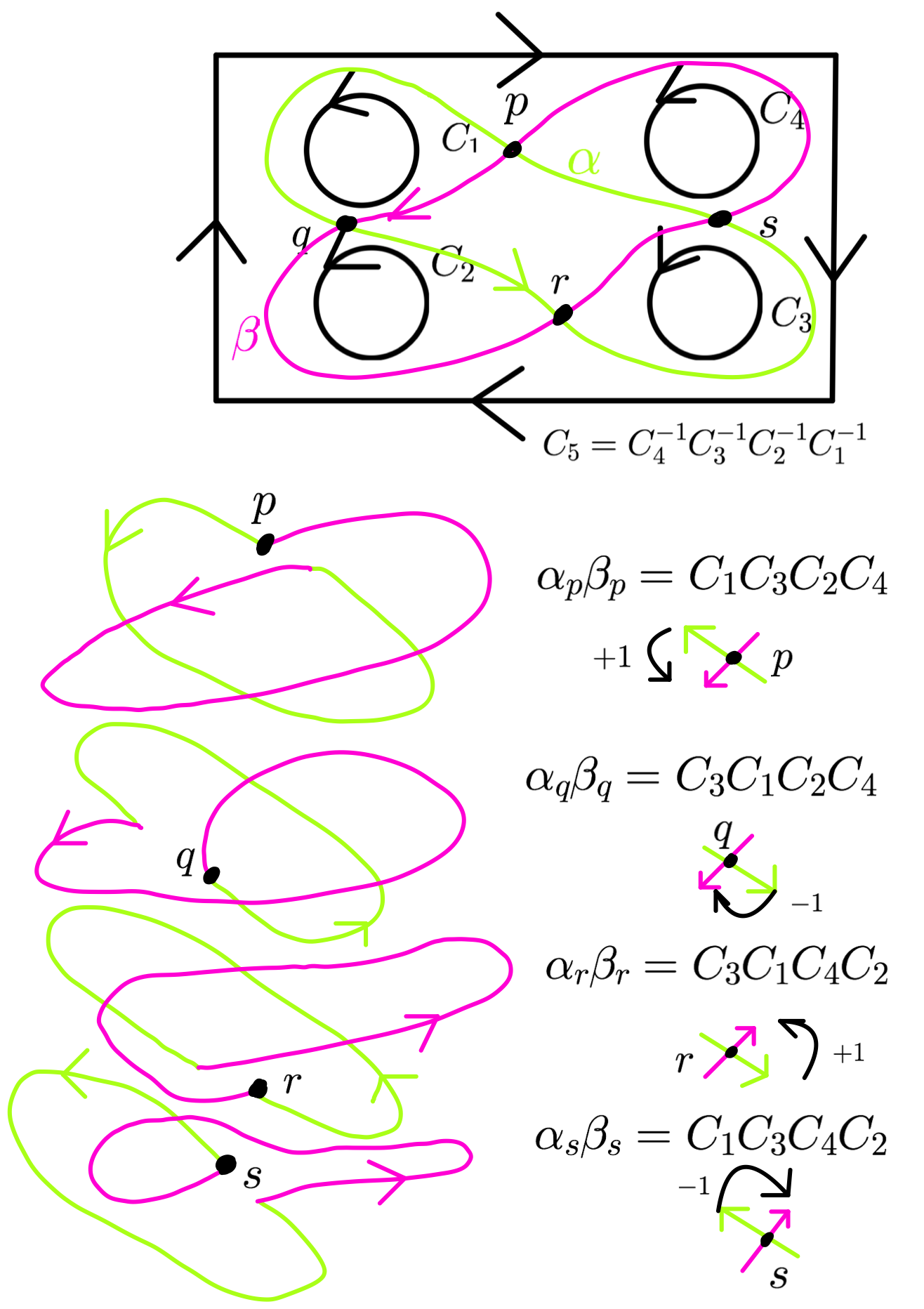} \caption{$\{t_{\{1,3\}},t_{\{2,4\}}\}$}
\label{1324} 
\end{figure}

\begin{eqnarray*}
\{t_{\{1,3\}},\,t_{\{2,4\}}\}&=&(+1)(\tr(c_1c_3c_2c_4)-\frac{1}{2}\tr(c_1c_3)\tr(c_2c_4))\\&+&(-1)(\tr(c_3c_1c_2c_4)-\frac{1}{2}\tr(c_1c_3)\tr(c_2c_4))\\
\\&+&(+1)(\tr(c_3c_1c_4c_2)-\frac{1}{2}\tr(c_1c_3)\tr(c_2c_4))\\
\\&+&(-1)(\tr(c_1c_3c_4c_2)-\frac{1}{2}\tr(c_1c_3)\tr(c_2c_4))\\
&=& t_{\{3\}} t_{\{4\}}
 t_{\{1,2\}}-2 t_{\{3,4\}}
 t_{\{1,2\}}-t_{\{2\}}
 t_{\{3\}}
 t_{\{1,4\}}-t_{\{1\}}
 t_{\{4\}} t_{\{2,3\}}\\&+&2
 t_{\{1,4\}}
 t_{\{2,3\}}+t_{\{1\}}
 t_{\{2\}} t_{\{3,4\}}.
\end{eqnarray*}

The next non-trivial computation will be for $\{t_{\{1,2\}},\,t_{\{1,4\}}\}$. We draw the curves $c_1c_2$ and $c_1c_4$ in Figure \ref{1214}.

\begin{figure}[ht!]
\includegraphics[scale=0.15]{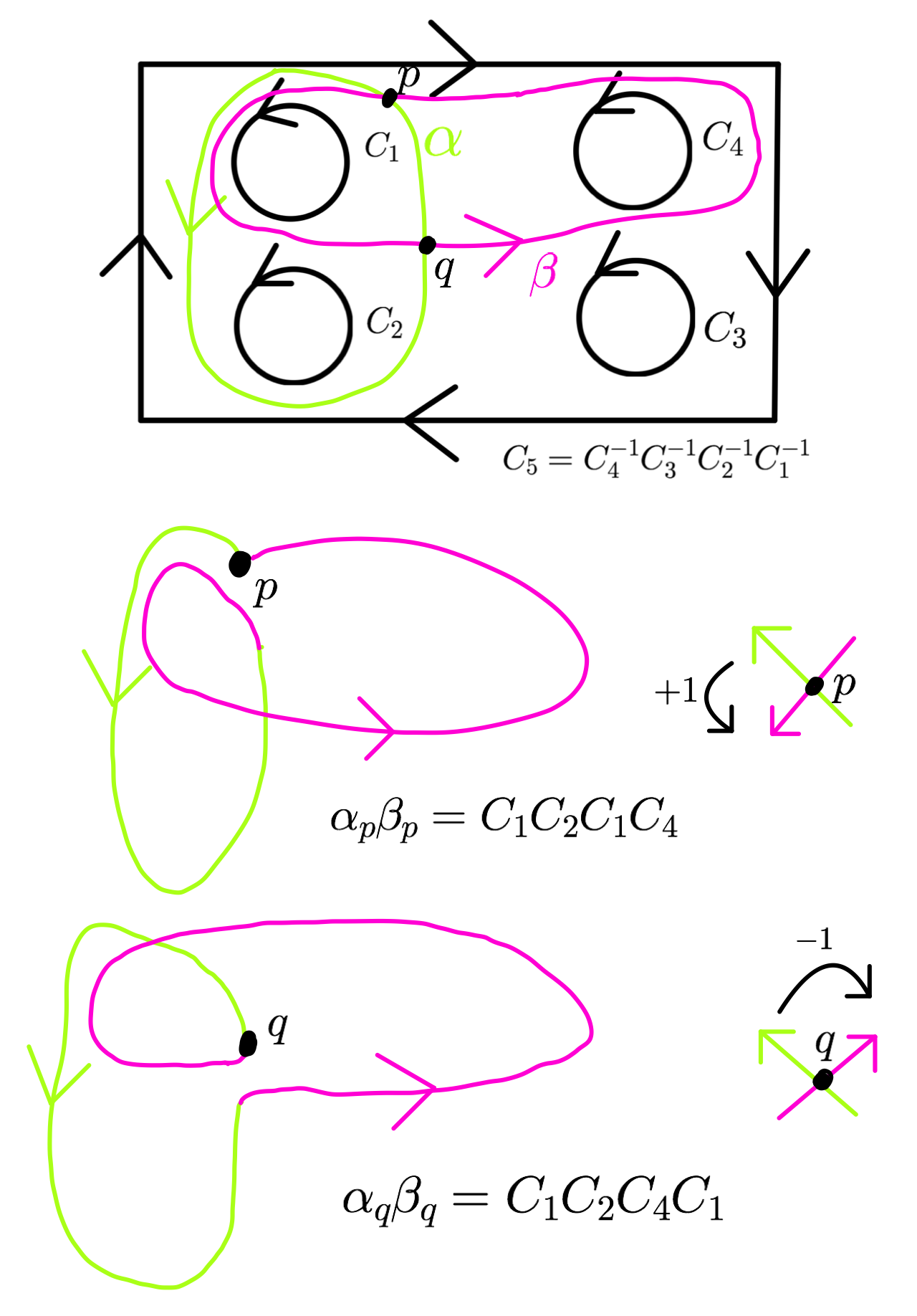} \caption{$\{t_{\{1,2\}},t_{\{1,4\}}\}$}
\label{1214} 
\end{figure}

\begin{eqnarray*}
\{t_{\{1,2\}},\,t_{\{1,4\}}\}&=&(+1)(\tr(c_1c_2c_1c_4)-\frac{1}{2}\tr(c_1c_2)\tr(c_1c_4))\\&+&(-1)(\tr(c_1c_2c_4c_1)-\frac{1}{2}\tr(c_1c_2)\tr(c_1c_4))\\
&=\,& t_{\{1,2\}} t_{\{1,4\}}+2 t_{\{2,4\}}-t_{\{1\}}
 t_{\{1,2,4\}}-t_{\{2\}} t_{\{4\}}.
\end{eqnarray*}

Likewise, by permuting the indices 3 and 4, we have: $$\{t_{\{1,2\}},\,t_{\{1,3\}}\}\,=\,t_{\{1,2\}} t_{\{1,3\}}+
2 t_{\{2,3\}}-t_{\{1\}} t_{\{1,2,3\}}-t_{\{2\}} t_{\{3\}},$$ and by permuting 2 and 3, we have:
$$ \{t_{\{1,3\}},\,t_{\{1,4\}}\}\,=\,t_{\{1,3\}} t_{\{1,4\}}+2 t_{\{3,4\}}-t_{\{1\}}
 t_{\{1,3,4\}}-t_{\{3\}} t_{\{4\}}.$$

We next draw the curves $c_1c_2$ and $c_2c_3$ in Figure \ref{1223} and compute the resulting bracket.

\begin{figure}[ht!]
\includegraphics[scale=0.15]{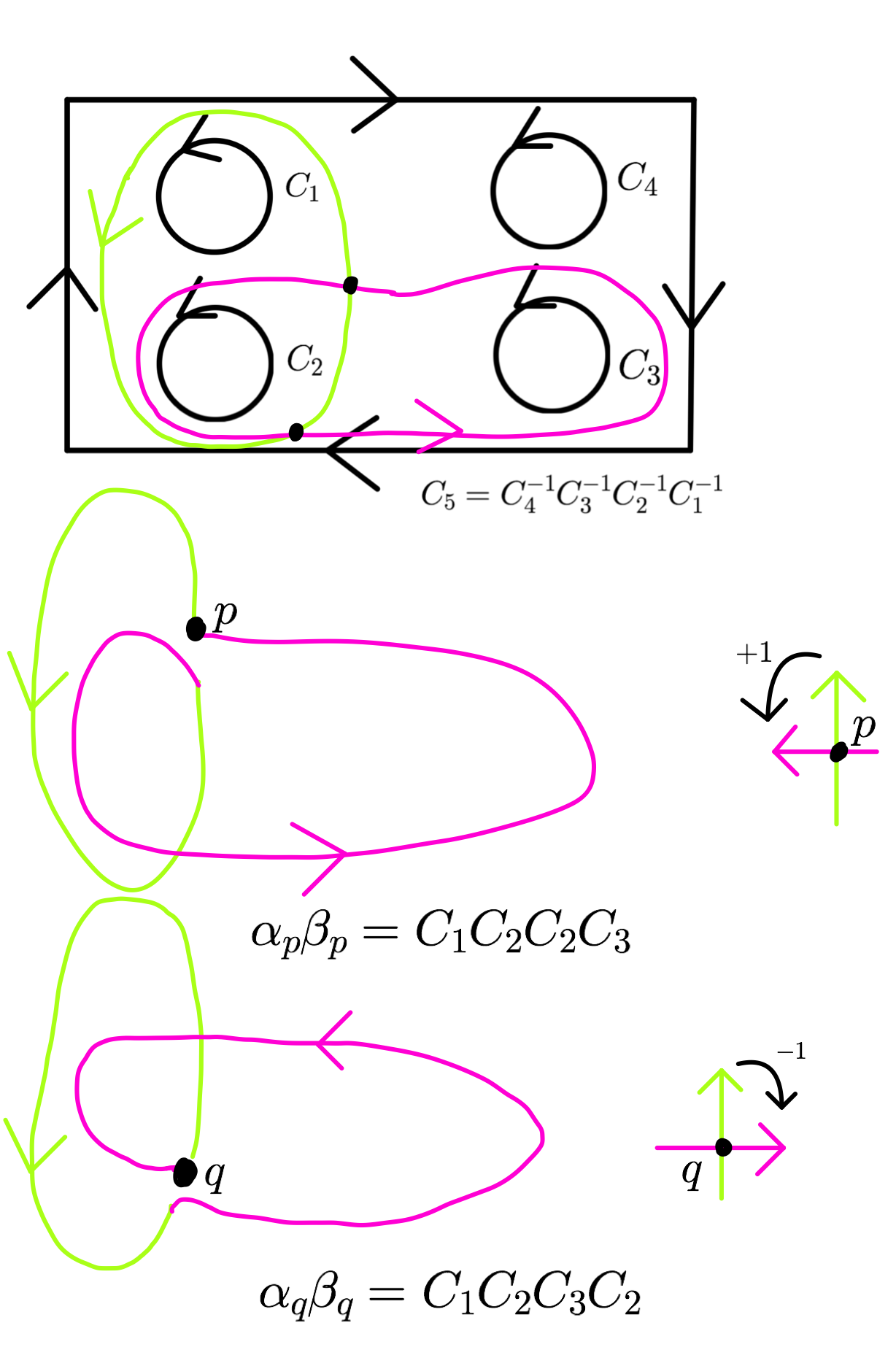} \caption{$\{t_{\{1,2\}},t_{\{2,3\}}\}$}
\label{1223} 
\end{figure}

\begin{eqnarray*}
\{t_{\{1,2\}},\,t_{\{2,3\}}\}&=&(+1)(\tr(c_1c_2c_2c_3)-\frac{1}{2}\tr(c_1c_2)\tr(c_2c_3))\\&+&(-1)(\tr(c_1c_2c_3c_2)-\frac{1}{2}\tr(c_1c_2)\tr(c_2c_3))\\
&=\,& -2 t_{\{1,3\}}-t_{\{1,2\}} t_{\{2,3\}}+t_{\{2\}}
 t_{\{1,2,3\}}+t_{\{1\}} t_{\{3\}}.
\end{eqnarray*}

Again, permuting the indices 3 and 4 we obtain $$\{t_{\{1,2\}},\,t_{\{2,4\}}\}\,=\,-2 t_{\{1,4\}}-t_{\{1,2\}}
t_{\{2,4\}}+t_{\{2\}} t_{\{1,2,4\}}+t_{\{1\}} t_{\{4\}},$$ and by permuting 1 and 4 we obtain
$$\{t_{\{2,4\}},\,t_{\{2,3\}}\}\,=\,-t_{\{2,3\}} t_{\{2,4\}}-2 t_{\{3,4\}}+t_{\{2\}}
t_{\{2,3,4\}}+t_{\{3\}} t_{\{4\}}.$$

We next draw the curves $c_1c_4$ and $c_3c_4$ in Figure \ref{1434} and compute the resulting bracket.

\begin{figure}[ht!]
\includegraphics[scale=0.15]{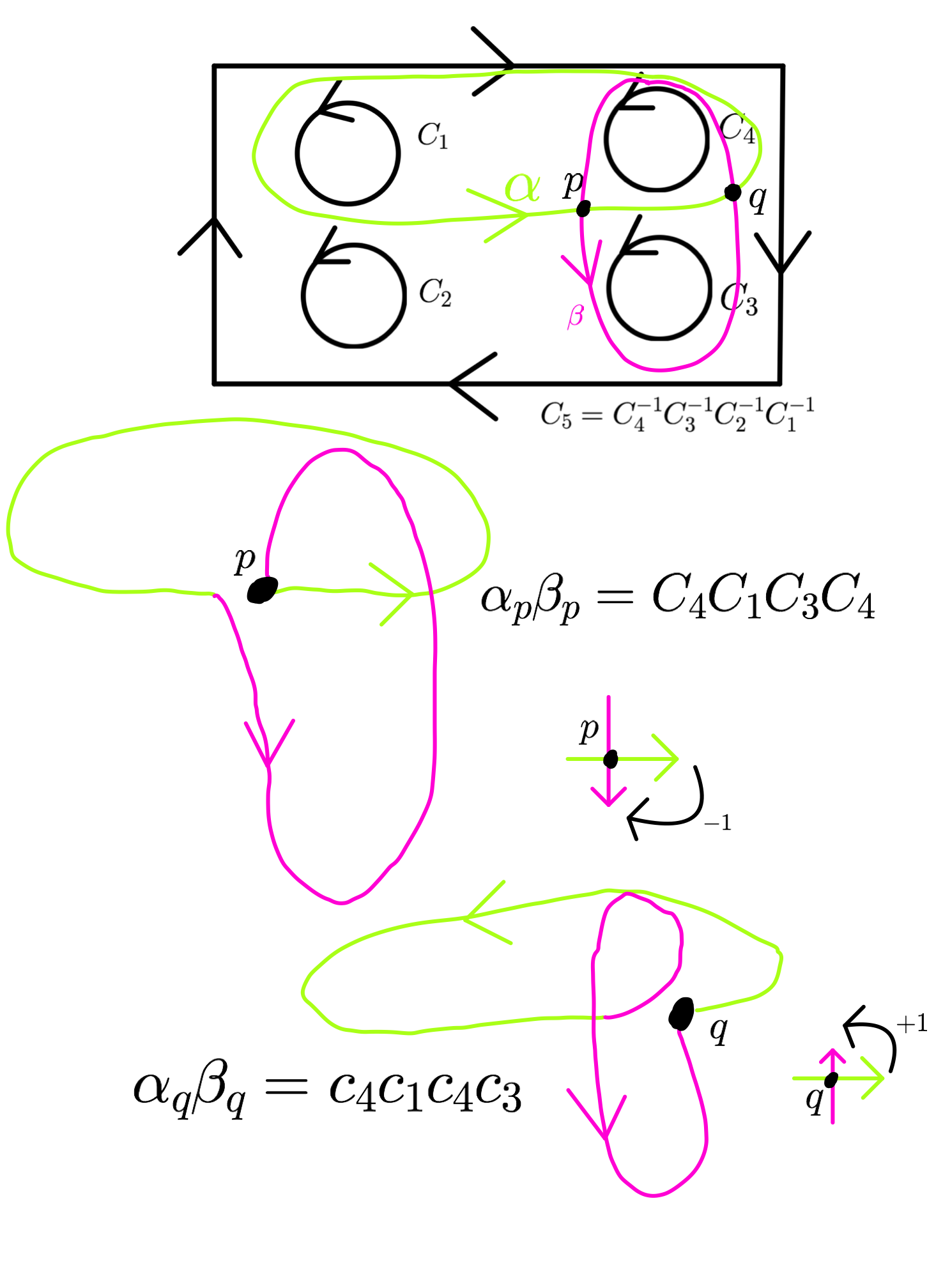} \caption{$\{t_{\{1,4\}},t_{\{3,4\}}\}$}
\label{1434} 
\end{figure}

\begin{eqnarray*}
\{t_{\{1,4\}},\,t_{\{3,4\}}\}&=&(-1)(\tr(c_4c_1c_3c_4)-\frac{1}{2}\tr(c_1c_4)\tr(c_3c_4))\\&+&(+1)(\tr(c_4c_1c_4c_3)-\frac{1}{2}\tr(c_1c_4)\tr(c_3c_4))\\
&=\,& 2 t_{\{1,3\}}+t_{\{1,4\}} t_{\{3,4\}}-t_{\{4\}}
 t_{\{1,3,4\}}-t_{\{1\}} t_{\{3\}}.
\end{eqnarray*}

Permuting the indices 1 and 2 we obtain $$\{t_{\{2,4\}},\,t_{\{3,4\}}\}\,=\,2 t_{\{2,3\}}+t_{\{2,4\}} t_{\{3,4\}}-
t_{\{4\}} t_{\{2,3,4\}}-t_{\{2\}} t_{\{3\}},$$ and by permuting 2 and 3 we obtain $$\{t_{\{1,4\}},\,t_{\{2,4\}}\}
\,=\,2 t_{\{1,2\}}+t_{\{1,4\}} t_{\{2,4\}}-t_{\{4\}}
t_{\{1,2,4\}}-t_{\{1\}} t_{\{2\}}.$$

We next draw the curves $c_2c_3$ and $c_3c_4$ in Figure \ref{2334} and compute the resulting bracket.

\begin{figure}[ht!]
\includegraphics[scale=0.15]{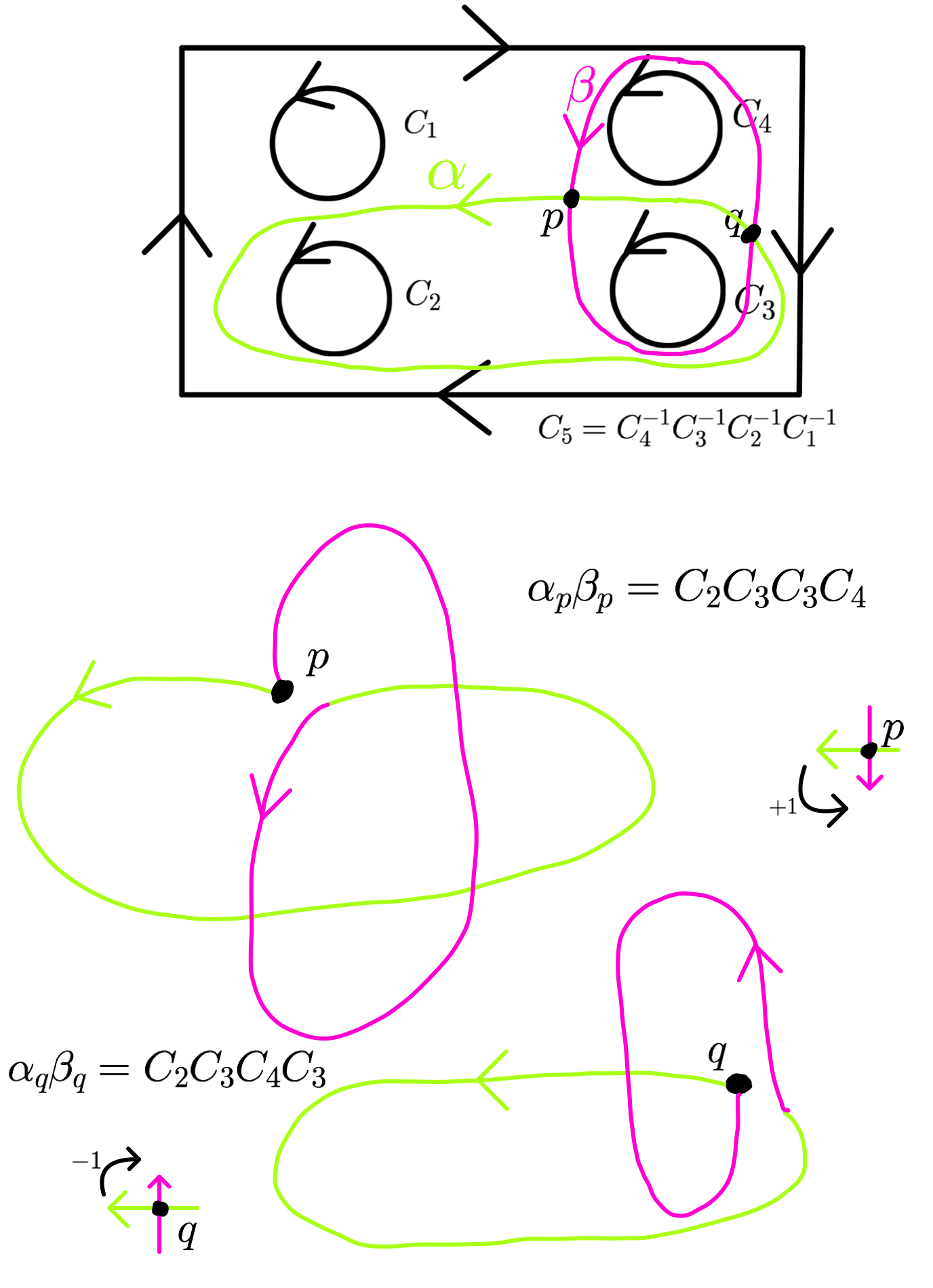} \caption{$\{t_{\{2,3\}},t_{\{3,4\}}\}$}
\label{2334} 
\end{figure}

\begin{eqnarray*}
\{t_{\{2,3\}},\,t_{\{3,4\}}\}&=&(+1)(\tr(c_2c_3c_3c_4)-\frac{1}{2}\tr(c_2c_3)\tr(c_3c_4))\\&+&(-1)(\tr(c_2c_3c_4c_3)-\frac{1}{2}\tr(c_2c_3)\tr(c_3c_4))\\
&=\,& -2 t_{\{2,4\}}-t_{\{2,3\}} t_{\{3,4\}}+t_{\{3\}}
 t_{\{2,3,4\}}+t_{\{2\}} t_{\{4\}}.
\end{eqnarray*}

Permuting the indices 1 and 2 we obtain $$\{t_{\{1,3\}},\,t_{\{3,4\}}\}\,=\,-2 t_{\{1,4\}}-t_{\{1,3\}} t_{\{3,4\}}+t_{\{3\}}
 t_{\{1,3,4\}}+t_{\{1\}} t_{\{4\}},$$ and by permuting 1 and 4 we obtain
$$\{t_{\{2,3\}},\,t_{\{1,3\}}\}\,=\,-2 t_{\{1,2\}}-t_{\{1,3\}} t_{\{2,3\}}+t_{\{3\}}
 t_{\{1,2,3\}}+t_{\{1\}} t_{\{2\}}.$$

\subsubsection{Type $(b)$ Pairings} There are 24 pairings of type $\{t_{\{i,j\}},\,t_{\{k,l,m\}}\}$.

\begin{figure}[ht!]
\includegraphics[scale=0.15]{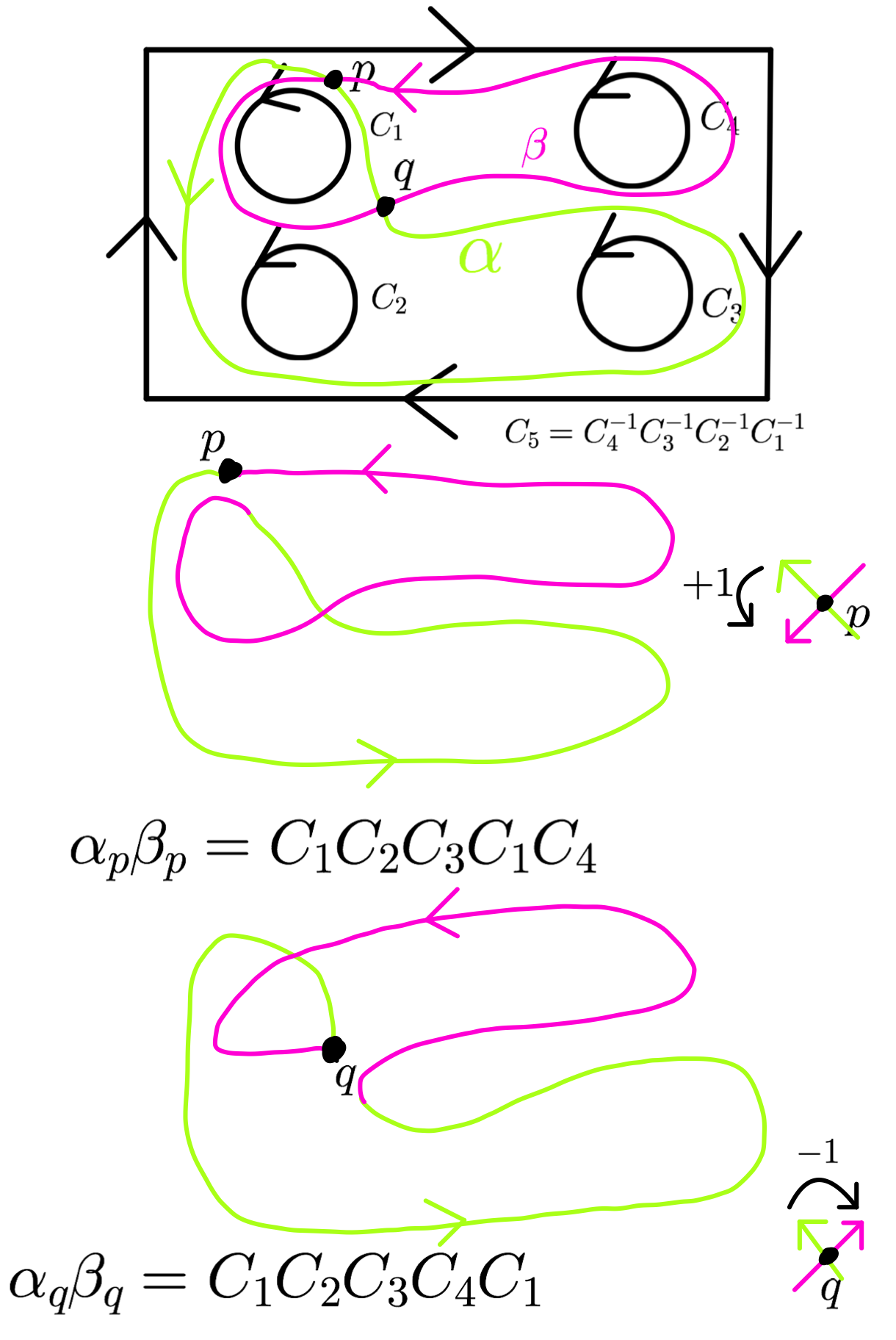} \caption{$\{t_{\{1,2,3\}},t_{\{1,4\}}\}$}
\label{12314} 
\end{figure}

\begin{eqnarray*}
\{t_{\{1,2,3\}},\,t_{\{1,4\}}\}&=&(+1)(\tr(c_1c_2c_3c_1c_4)-\frac{1}{2}\tr(c_1c_2c_3)\tr(c_1c_4))\\&+&(-1)(\tr(c_1c_2c_3c_4c_1)-\frac{1}{2}\tr(c_1c_2c_3)\tr(c_1c_4))\\
&=\,&\frac{1}{2} t_{\{4\}} t_{\{1\}}^2 t_{\{2,3\}}+\frac{1}{2} t_{\{2\}}
 t_{\{1\}}^2 t_{\{3,4\}}-\frac{1}{2} t_{\{1\}}^2
 t_{\{2,3,4\}}+\frac{1}{2} t_{\{3\}} t_{\{4\}} t_{\{1\}}
 t_{\{1,2\}}\\&+&\frac{1}{2} t_{\{2\}} t_{\{3\}} t_{\{1\}}
 t_{\{1,4\}}-\frac{1}{2} t_{\{1\}} t_{\{1,4\}}
 t_{\{2,3\}}+\frac{1}{2} t_{\{1\}} t_{\{1,3\}}
 t_{\{2,4\}}\\&-&\frac{1}{2} t_{\{1\}} t_{\{1,2\}}
 t_{\{3,4\}}-\frac{1}{2} t_{\{4\}} t_{\{1\}}
 t_{\{1,2,3\}}-\frac{1}{2} t_{\{3\}} t_{\{1\}}
 t_{\{1,2,4\}}-\frac{1}{2} t_{\{2\}} t_{\{1\}}
 t_{\{1,3,4\}}\\&-&t_{\{4\}} t_{\{2,3\}}+t_{\{1,4\}} t_{\{1,2,3\}}+2
 t_{\{2,3,4\}}-\frac{1}{2} t_{\{2\}} t_{\{3\}} t_{\{4\}}
 t_{\{1\}}^2.
\end{eqnarray*}

One finds similar diagrams for $\{t_{\{1,2,3\}},\,t_{\{2,4\}}\}$, and $\{t_{\{1,2,3\}},\,t_{\{3,4\}}\}.$

Indeed, we have:

\begin{eqnarray*}
\{t_{\{1,2,3\}},\,t_{\{2,4\}}\}&=&(+1)(\tr(c_1c_2c_3c_2c_4)-\frac{1}{2}\tr(c_1c_2c_3)\tr(c_2c_4))\\&+&(-1)(\tr(c_1c_2c_3c_4c_2)-\frac{1}{2}\tr(c_1c_2c_3)\tr(c_2c_4))\\
&=\,&-t_{\{3\}} t_{\{1,4\}}+t_{\{1\}} t_{\{3,4\}}+t_{\{2,3\}}
 t_{\{1,2,4\}}-t_{\{1,2\}} t_{\{2,3,4\}},
\end{eqnarray*}

and

\begin{eqnarray*}
\{t_{\{1,2,3\}},\,t_{\{3,4\}}\}&=&(+1)(\tr(c_1c_2c_3c_3c_4)-\frac{1}{2}\tr(c_1c_2c_3)\tr(c_3c_4))\\&+&(-1)(\tr(c_1c_2c_3c_4c_3)-\frac{1}{2}\tr(c_1c_2c_3)\tr(c_3c_4))\\
&=\,& -\frac{1}{2} t_{\{4\}} t_{\{3\}}^2 t_{\{1,2\}}-\frac{1}{2} t_{\{2\}}
 t_{\{3\}}^2 t_{\{1,4\}}+\frac{1}{2} t_{\{3\}}^2
 t_{\{1,2,4\}}-\frac{1}{2} t_{\{1\}} t_{\{4\}} t_{\{3\}}
 t_{\{2,3\}}\\&+&\frac{1}{2} t_{\{3\}} t_{\{1,4\}}
 t_{\{2,3\}}-\frac{1}{2} t_{\{3\}} t_{\{1,3\}}
 t_{\{2,4\}}-\frac{1}{2} t_{\{1\}} t_{\{2\}} t_{\{3\}}
 t_{\{3,4\}}\\&+&\frac{1}{2} t_{\{3\}} t_{\{1,2\}}
 t_{\{3,4\}}+\frac{1}{2} t_{\{4\}} t_{\{3\}}
 t_{\{1,2,3\}}+\frac{1}{2} t_{\{2\}} t_{\{3\}}
 t_{\{1,3,4\}}+\frac{1}{2} t_{\{1\}} t_{\{3\}}
 t_{\{2,3,4\}}\\&+&t_{\{4\}} t_{\{1,2\}}-t_{\{3,4\}} t_{\{1,2,3\}}-2
 t_{\{1,2,4\}}+\frac{1}{2} t_{\{1\}} t_{\{2\}} t_{\{4\}}
 t_{\{3\}}^2.
\end{eqnarray*}

Since the curves are disjoint (Figure \ref{12312}), we have:
$\{t_{\{1,2,3\}},\,t_{\{1,2\}}\}\,=\,0,$\, $\{t_{\{1,2,3\}},\,t_{\{1,3\}}\}\,=\,0$ and $\{t_{\{1,2,3\}},\,t_{\{2,3\}}\}
\,=\,0$.

\begin{figure}[ht!]
\includegraphics[scale=0.15]{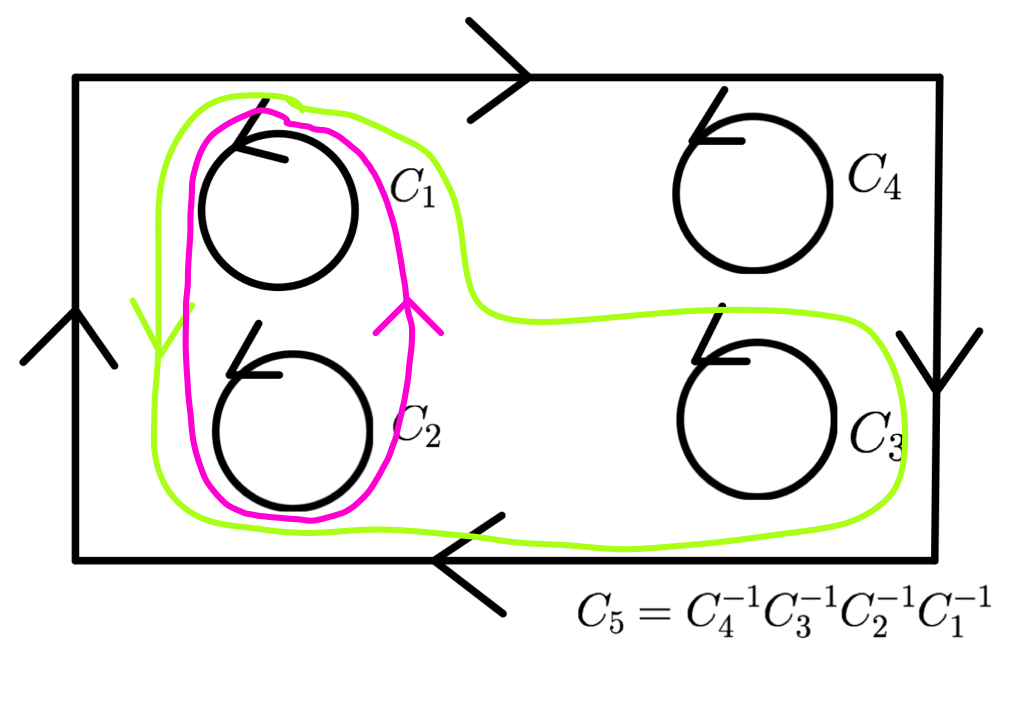} \caption{$\{t_{\{1,2,3\}},t_{\{1,2\}}\}=0$}
\label{12312} 
\end{figure}

So we see, that for each diagram like Figure \ref{12314}, which there are 4, we obtain the data for 6 pairings.

Now on to the next diagram (Figure \ref{12413}).

\begin{figure}[ht!]
\includegraphics[scale=0.15]{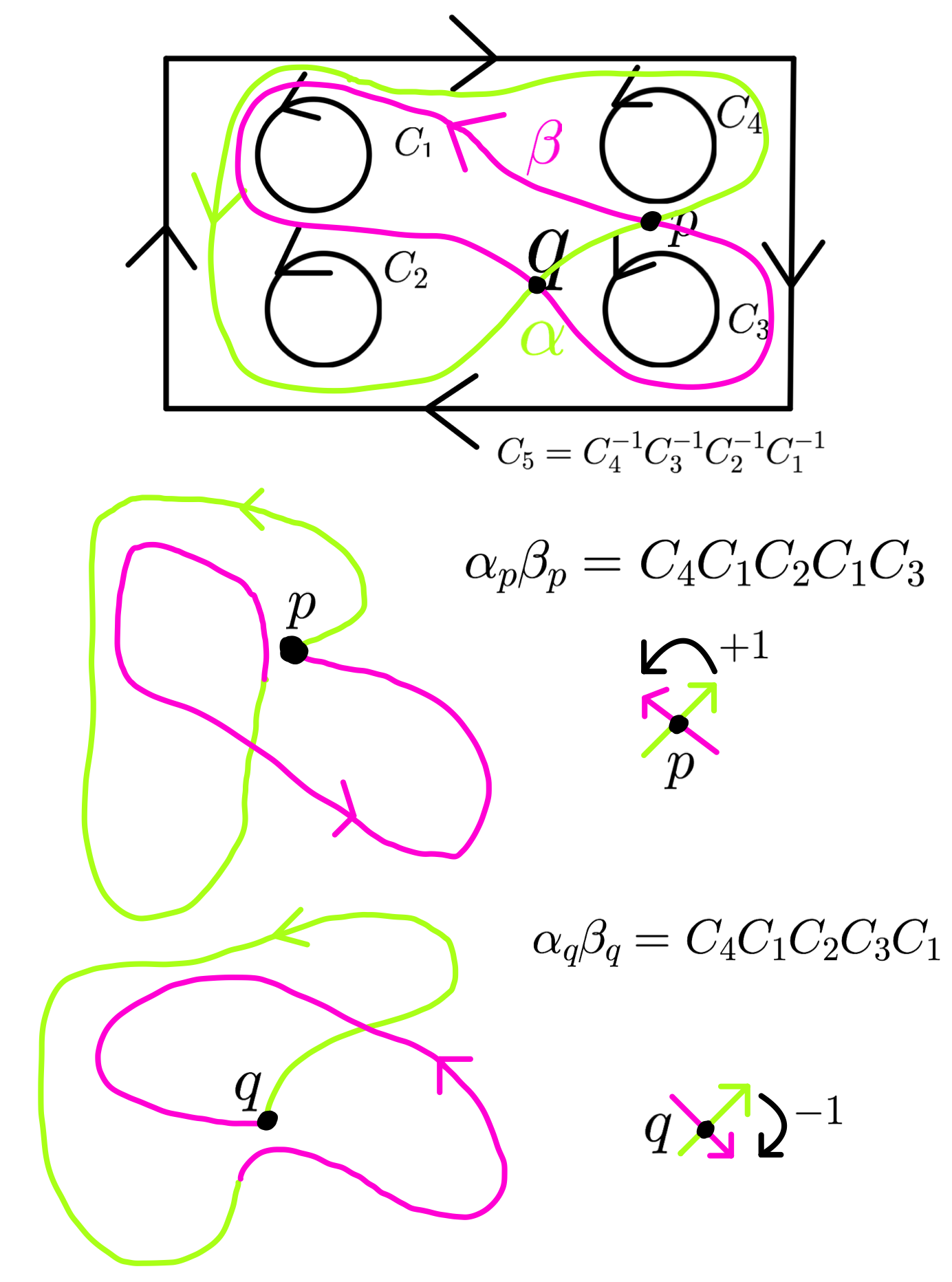} \caption{$\{t_{\{1,2,4\}},t_{\{1,3\}}\}$}
\label{12413} 
\end{figure}

\begin{eqnarray*}
\{t_{\{1,2,4\}},\,t_{\{1,3\}}\}&=&(+1)(\tr(c_4c_1c_2c_1c_3)-\frac{1}{2}\tr(c_1c_2c_4)\tr(c_1c_3))\\&+&(-1)(\tr(c_4c_1c_2c_3c_1)-\frac{1}{2}\tr(c_1c_2c_4)\tr(c_1c_3))\\
&=&
t_{\{4\}} t_{\{2,3\}}-t_{\{2\}} t_{\{3,4\}}-t_{\{1,4\}}t_{\{1,2,3\}}+t_{\{1,2\}} t_{\{1,3,4\}}.
\end{eqnarray*}

Again, one finds similar diagrams for $\{t_{\{1,2,4\}},\,t_{\{2,3\}}\}$, and $\{t_{\{1,2,4\}},\,t_{\{3,4\}}\}.$

Indeed, we have:

\begin{eqnarray*}
\{t_{\{1,2,4\}},\,t_{\{2,3\}}\}&=&(+1)(\tr(c_4c_1c_2c_2c_3)-\frac{1}{2}\tr(c_1c_2c_4)\tr(c_2c_3))\\&+&(-1)(\tr(c_4c_1c_2c_3c_2)-\frac{1}{2}\tr(c_1c_2c_4)\tr(c_2c_3))\\
&=\,&-\frac{1}{2} t_{\{3\}} t_{\{2\}}^2 t_{\{1,4\}}-\frac{1}{2} t_{\{1\}}
 t_{\{2\}}^2 t_{\{3,4\}}+\frac{1}{2} t_{\{2\}}^2
 t_{\{1,3,4\}}-\frac{1}{2} t_{\{3\}} t_{\{4\}} t_{\{2\}}
 t_{\{1,2\}}\\&-&\frac{1}{2} t_{\{1\}} t_{\{4\}} t_{\{2\}}
 t_{\{2,3\}}+\frac{1}{2} t_{\{2\}} t_{\{1,4\}}
 t_{\{2,3\}}-\frac{1}{2} t_{\{2\}} t_{\{1,3\}}
 t_{\{2,4\}}\\&+&\frac{1}{2} t_{\{2\}} t_{\{1,2\}}
 t_{\{3,4\}}+\frac{1}{2} t_{\{4\}} t_{\{2\}}
 t_{\{1,2,3\}}+\frac{1}{2} t_{\{3\}} t_{\{2\}}
 t_{\{1,2,4\}}\\&+&\frac{1}{2} t_{\{1\}} t_{\{2\}}
 t_{\{2,3,4\}}+t_{\{3\}} t_{\{1,4\}}-t_{\{2,3\}} t_{\{1,2,4\}}-2
 t_{\{1,3,4\}}+\frac{1}{2} t_{\{1\}} t_{\{3\}} t_{\{4\}}
 t_{\{2\}}^2,
\end{eqnarray*}

and

\begin{eqnarray*}
\{t_{\{1,2,4\}},\,t_{\{3,4\}}\}&=&(+1)(\tr(c_4c_1c_2c_4c_3)-\frac{1}{2}\tr(c_1c_2c_4)\tr(c_3c_4))\\&+&(-1)(\tr(c_4c_1c_2c_3c_4)-\frac{1}{2}\tr(c_1c_2c_4)\tr(c_3c_4))\\
&=\,& \frac{1}{2} t_{\{3\}} t_{\{4\}}^2 t_{\{1,2\}}+\frac{1}{2} t_{\{1\}}
 t_{\{4\}}^2 t_{\{2,3\}}-\frac{1}{2} t_{\{4\}}^2
 t_{\{1,2,3\}}+\frac{1}{2} t_{\{2\}} t_{\{3\}} t_{\{4\}}
 t_{\{1,4\}}\\&-&\frac{1}{2} t_{\{4\}} t_{\{1,4\}}
 t_{\{2,3\}}+\frac{1}{2} t_{\{4\}} t_{\{1,3\}}
 t_{\{2,4\}}+\frac{1}{2} t_{\{1\}} t_{\{2\}} t_{\{4\}}
 t_{\{3,4\}}\\&-&\frac{1}{2} t_{\{4\}} t_{\{1,2\}}
 t_{\{3,4\}}-\frac{1}{2} t_{\{3\}} t_{\{4\}}
 t_{\{1,2,4\}}-\frac{1}{2} t_{\{2\}} t_{\{4\}}
 t_{\{1,3,4\}}-\frac{1}{2} t_{\{1\}} t_{\{4\}}
 t_{\{2,3,4\}}\\&-&t_{\{3\}} t_{\{1,2\}}+2 t_{\{1,2,3\}}+t_{\{3,4\}}
 t_{\{1,2,4\}}-\frac{1}{2} t_{\{1\}} t_{\{2\}} t_{\{3\}}
 t_{\{4\}}^2.
\end{eqnarray*}

Since the curves are disjoint, we have:
$\{t_{\{1,2,4\}},\,t_{\{1,2\}}\}\,=\,0,$\, $\{t_{\{1,2,4\}},\,t_{\{1,4\}}\}\,=\,0$ and $\{t_{\{1,2,4\}},
\,t_{\{2,4\}}\}\,=\,0$.

Next we have Figure \ref{13423}.
\begin{figure}[ht!]
\includegraphics[scale=0.15]{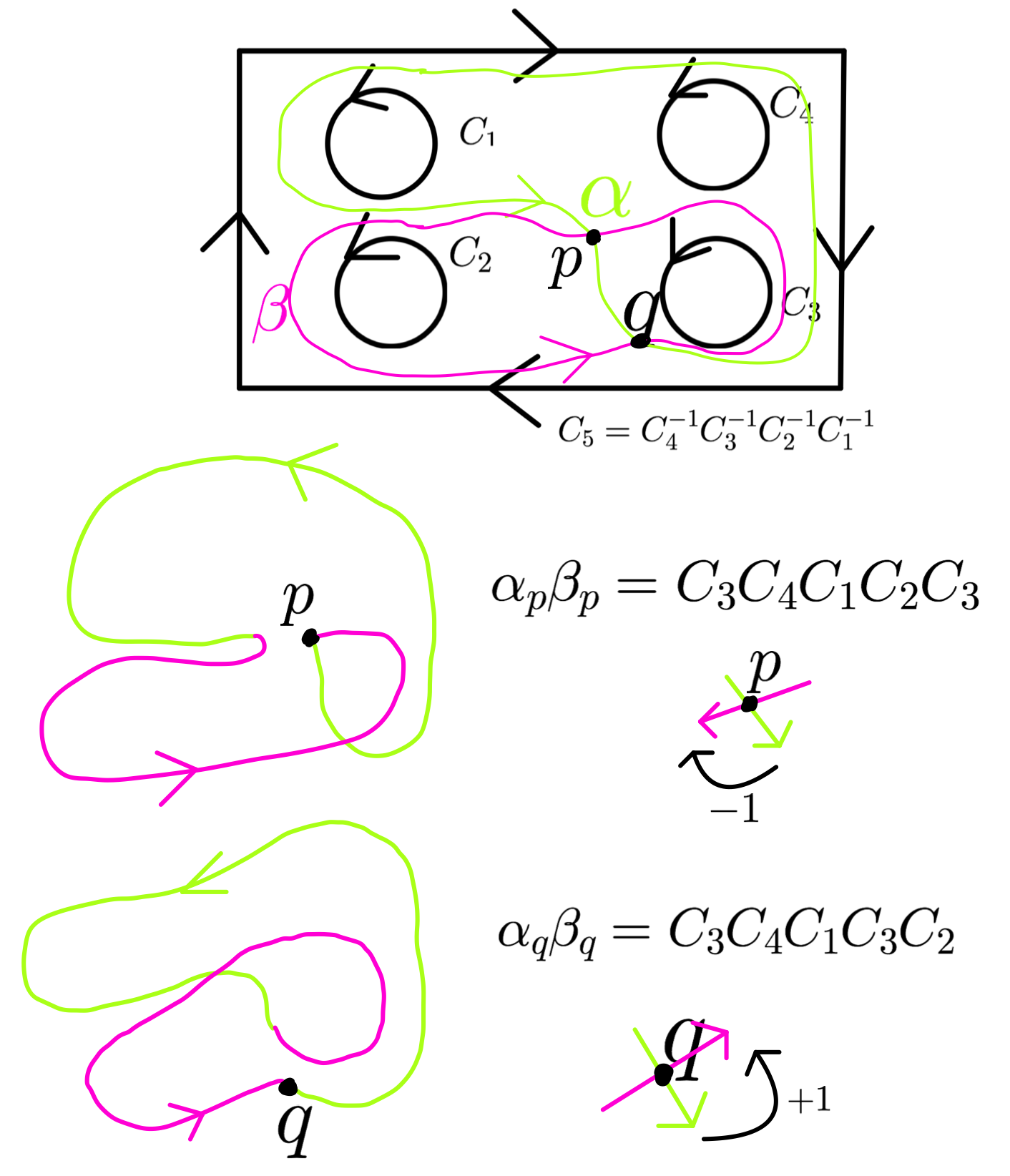} \caption{$\{t_{\{1,3,4\}},t_{\{2,3\}}\}$}
\label{13423} 
\end{figure}

\begin{eqnarray*}
\{t_{\{1,3,4\}},\,t_{\{2,3\}}\}&=&(-1)(\tr(c_3c_4c_1c_2c_3)-\frac{1}{2}\tr(c_1c_3c_4)\tr(c_2c_3))\\&+&(+1)(\tr(c_3c_4c_1c_3c_2)-\frac{1}{2}\tr(c_1c_3c_4)\tr(c_2c_3))\\
&=\,&\frac{1}{2} t_{\{4\}} t_{\{3\}}^2 t_{\{1,2\}}+\frac{1}{2} t_{\{2\}}
 t_{\{3\}}^2 t_{\{1,4\}}-\frac{1}{2} t_{\{3\}}^2
 t_{\{1,2,4\}}+\frac{1}{2} t_{\{1\}} t_{\{4\}} t_{\{3\}}
 t_{\{2,3\}}\\&-&\frac{1}{2} t_{\{3\}} t_{\{1,4\}}
 t_{\{2,3\}}+\frac{1}{2} t_{\{3\}} t_{\{1,3\}}
 t_{\{2,4\}}+\frac{1}{2} t_{\{1\}} t_{\{2\}} t_{\{3\}}
 t_{\{3,4\}}-\frac{1}{2} t_{\{3\}} t_{\{1,2\}}
 t_{\{3,4\}}\\&-&\frac{1}{2} t_{\{4\}} t_{\{3\}}
 t_{\{1,2,3\}}-\frac{1}{2} t_{\{2\}} t_{\{3\}}
 t_{\{1,3,4\}}-\frac{1}{2} t_{\{1\}} t_{\{3\}}
 t_{\{2,3,4\}}-t_{\{2\}} t_{\{1,4\}}\\&+&2 t_{\{1,2,4\}}+t_{\{2,3\}}
 t_{\{1,3,4\}}-\frac{1}{2} t_{\{1\}} t_{\{2\}} t_{\{4\}}
 t_{\{3\}}^2.
\end{eqnarray*}

Again, one finds similar diagrams for $\{t_{\{1,3,4\}},\,t_{\{2,4\}}\}$ and $\{t_{\{1,3,4\}},\,t_{\{1,2\}}\}.$

Indeed, we have:

\begin{eqnarray*}
\{t_{\{1,3,4\}},\,t_{\{2,4\}}\}&=&(-1)(\tr(c_3c_4c_1c_2c_4)-\frac{1}{2}\tr(c_1c_3c_4)\tr(c_2c_4))\\&+&(+1)(\tr(c_3c_4c_1c_4c_2)-\frac{1}{2}\tr(c_1c_3c_4)\tr(c_2c_4))\\
&=\,&t_{\{3\}} t_{\{1,2\}}-t_{\{1\}} t_{\{2,3\}}-t_{\{3,4\}}
 t_{\{1,2,4\}}+t_{\{1,4\}} t_{\{2,3,4\}},
\end{eqnarray*}

and

\begin{eqnarray*}
\{t_{\{1,3,4\}},\,t_{\{1,2\}}\}&=&(-1)(\tr(c_3c_4c_1c_2c_1)-\frac{1}{2}\tr(c_1c_3c_4)\tr(c_1c_2))\\&+&(+1)(\tr(c_3c_4c_1c_1c_2)-\frac{1}{2}\tr(c_1c_3c_4)\tr(c_1c_2))\\
&=\,&-\frac{1}{2} t_{\{4\}} t_{\{1\}}^2 t_{\{2,3\}}-\frac{1}{2} t_{\{2\}}
 t_{\{1\}}^2 t_{\{3,4\}}+\frac{1}{2} t_{\{1\}}^2
 t_{\{2,3,4\}}-\frac{1}{2} t_{\{3\}} t_{\{4\}} t_{\{1\}}
 t_{\{1,2\}}\\&-&\frac{1}{2} t_{\{2\}} t_{\{3\}} t_{\{1\}}
 t_{\{1,4\}}+\frac{1}{2} t_{\{1\}} t_{\{1,4\}}
 t_{\{2,3\}}-\frac{1}{2} t_{\{1\}} t_{\{1,3\}}
 t_{\{2,4\}}+\frac{1}{2} t_{\{1\}} t_{\{1,2\}}
 t_{\{3,4\}}\\&+&\frac{1}{2} t_{\{4\}} t_{\{1\}}
 t_{\{1,2,3\}}+\frac{1}{2} t_{\{3\}} t_{\{1\}}
 t_{\{1,2,4\}}+\frac{1}{2} t_{\{2\}} t_{\{1\}}
 t_{\{1,3,4\}}+t_{\{2\}} t_{\{3,4\}}\\&-&t_{\{1,2\}} t_{\{1,3,4\}}-2
 t_{\{2,3,4\}}+\frac{1}{2} t_{\{2\}} t_{\{3\}} t_{\{4\}}
 t_{\{1\}}^2.
\end{eqnarray*}

Since the curves are disjoint, we have:
$\{t_{\{1,3,4\}},\,t_{\{1,4\}}\}\,=\,0,$\, $\{t_{\{1,3,4\}},\,t_{\{1,3\}}\}\,=\,0$ and
$\{t_{\{1,3,4\}},\,t_{\{3,4\}}\}\,=\,0$.

Figure \ref{23414} depicts the final diagram for this type of pairing.

\begin{figure}[ht!]
\includegraphics[scale=0.15]{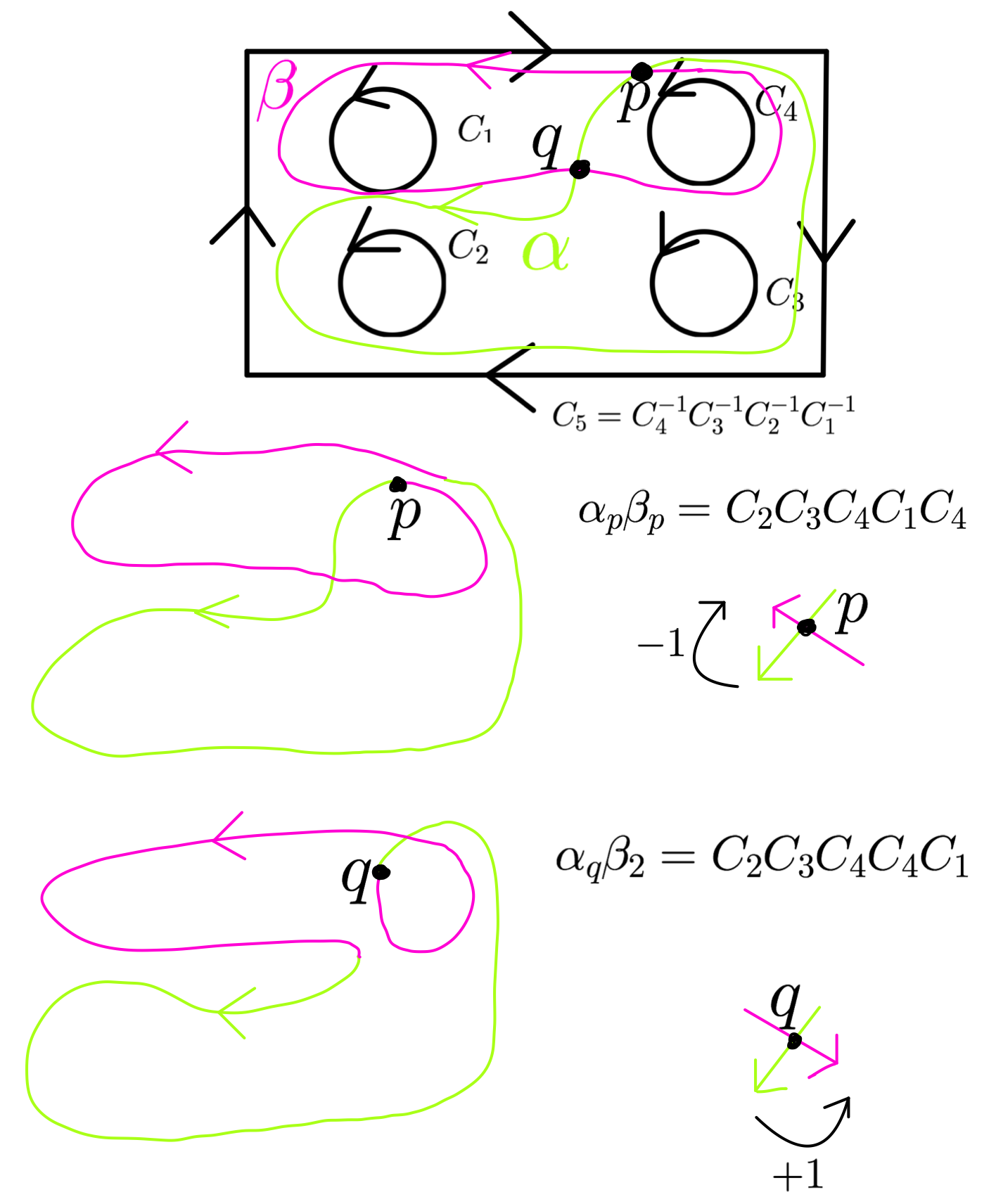} \caption{$\{t_{\{2,3,4\}},t_{\{1,4\}}\}$}
\label{23414} 
\end{figure}

\begin{eqnarray*}
\{t_{\{2,3,4\}},\,t_{\{1,4\}}\}&=&(-1)(\tr(c_2c_3c_4c_1c_4)-\frac{1}{2}\tr(c_2c_3c_4)\tr(c_1c_4))\\&+&(+1)(\tr(c_2c_3c_4c_4c_1)-\frac{1}{2}\tr(c_2c_3c_4)\tr(c_1c_4))\\
&=\,&
-\frac{1}{2} t_{\{3\}} t_{\{4\}}^2 t_{\{1,2\}}-\frac{1}{2} t_{\{1\}}
 t_{\{4\}}^2 t_{\{2,3\}}+\frac{1}{2} t_{\{4\}}^2
 t_{\{1,2,3\}}-\frac{1}{2} t_{\{2\}} t_{\{3\}} t_{\{4\}}
 t_{\{1,4\}}\\&+&\frac{1}{2} t_{\{4\}} t_{\{1,4\}}
 t_{\{2,3\}}-\frac{1}{2} t_{\{4\}} t_{\{1,3\}}
 t_{\{2,4\}}-\frac{1}{2} t_{\{1\}} t_{\{2\}} t_{\{4\}}
 t_{\{3,4\}}+\frac{1}{2} t_{\{4\}} t_{\{1,2\}}
 t_{\{3,4\}}\\&+&\frac{1}{2} t_{\{3\}} t_{\{4\}}
 t_{\{1,2,4\}}+\frac{1}{2} t_{\{2\}} t_{\{4\}}
 t_{\{1,3,4\}}+\frac{1}{2} t_{\{1\}} t_{\{4\}}
 t_{\{2,3,4\}}+t_{\{1\}} t_{\{2,3\}}\\&-&2 t_{\{1,2,3\}}-t_{\{1,4\}}
 t_{\{2,3,4\}}+\frac{1}{2} t_{\{1\}} t_{\{2\}} t_{\{3\}}
 t_{\{4\}}^2
\end{eqnarray*}

Again, one finds similar diagrams for $\{t_{\{2,3,4\}},\,t_{\{1,3\}}\}$, and $\{t_{\{2,3,4\}},\,t_{\{1,2\}}\}.$

Indeed, we have:

\begin{eqnarray*}
\{t_{\{2,3,4\}},\,t_{\{1,3\}}\}&=&(-1)(\tr(c_2c_3c_4c_1c_3)-\frac{1}{2}\tr(c_2c_3c_4)\tr(c_1c_3))\\&+&(+1)(\tr(c_2c_3c_4c_3c_1)-\frac{1}{2}\tr(c_2c_3c_4)\tr(c_1c_3))\\
&=\,&-t_{\{4\}} t_{\{1,2\}}+t_{\{2\}} t_{\{1,4\}}+t_{\{3,4\}}
 t_{\{1,2,3\}}-t_{\{2,3\}} t_{\{1,3,4\}},
\end{eqnarray*}

and

\begin{eqnarray*}
\{t_{\{2,3,4\}},\,t_{\{1,2\}}\}&=&(-1)(\tr(c_2c_3c_4c_1c_2)-\frac{1}{2}\tr(c_2c_3c_4)\tr(c_1c_2))\\&+&(+1)(\tr(c_2c_3c_4c_2c_1)-\frac{1}{2}\tr(c_2c_3c_4)\tr(c_1c_2))\\
&=\,& \frac{1}{2} t_{\{3\}} t_{\{2\}}^2 t_{\{1,4\}}+\frac{1}{2} t_{\{1\}}
 t_{\{2\}}^2 t_{\{3,4\}}-\frac{1}{2} t_{\{2\}}^2
 t_{\{1,3,4\}}+\frac{1}{2} t_{\{3\}} t_{\{4\}} t_{\{2\}}
 t_{\{1,2\}}\\&+&\frac{1}{2} t_{\{1\}} t_{\{4\}} t_{\{2\}}
 t_{\{2,3\}}-\frac{1}{2} t_{\{2\}} t_{\{1,4\}}
 t_{\{2,3\}}+\frac{1}{2} t_{\{2\}} t_{\{1,3\}}
 t_{\{2,4\}}-\frac{1}{2} t_{\{2\}} t_{\{1,2\}}
 t_{\{3,4\}}\\&-&\frac{1}{2} t_{\{4\}} t_{\{2\}}
 t_{\{1,2,3\}}-\frac{1}{2} t_{\{3\}} t_{\{2\}}
 t_{\{1,2,4\}}-\frac{1}{2} t_{\{1\}} t_{\{2\}}
 t_{\{2,3,4\}}-t_{\{1\}} t_{\{3,4\}}\\&+&2 t_{\{1,3,4\}}+t_{\{1,2\}}
 t_{\{2,3,4\}}-\frac{1}{2} t_{\{1\}} t_{\{3\}} t_{\{4\}}
 t_{\{2\}}^2.
\end{eqnarray*}

Since the curves are disjoint, we have:
$\{t_{\{2,3,4\}},\,t_{\{3,4\}}\}\,=\,0,$\, $\{t_{\{2,3,4\}},\,t_{\{2,3\}}\}\,=\,0$ and $\{t_{\{2,3,4\}},
\,t_{\{2,4\}}\}\,=\,0$.

\subsubsection{Type $(c)$ Pairings}

There are 6 pairings of type $\{t_{\{i,j,k\}},\,t_{\{l,m,n\}}\}$. Figure \ref{123134} is the first one of this type.

\begin{figure}[ht!]
\includegraphics[scale=0.15]{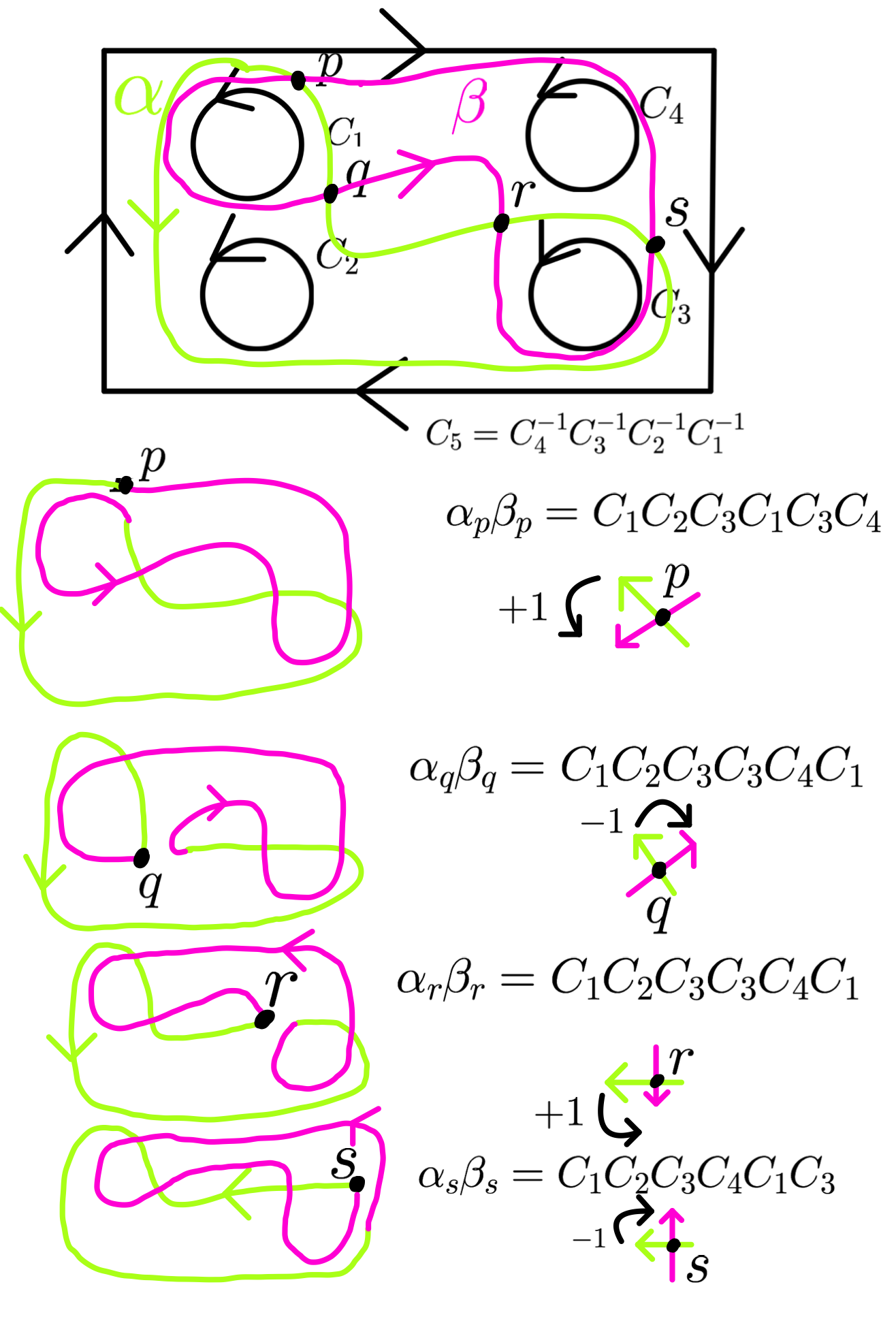} \caption{$\{t_{\{1,2,3\}},t_{\{1,3,4\}}\}$}
\label{123134} 
\end{figure}

\begin{eqnarray*}
\{t_{\{1,2,3\}},\,t_{\{1,3,4\}}\}&=&(+1)(\tr(c_1c_2c_3c_1c_3c_4)-\frac{1}{2}\tr(c_1c_2c_3)\tr(c_1c_3c_4))\\&+&(-1)(\tr(c_1c_2c_3c_3c_4c_1)-\frac{1}{2}\tr(c_1c_2c_3)\tr(c_1c_3c_4))\\&+&(+1)(\tr(c_1c_2c_3c_3c_4c_1)-\frac{1}{2}\tr(c_1c_2c_3)\tr(c_1c_3c_4))\\&+&(-1)(\tr(c_1c_2c_3c_4c_1c_3)-\frac{1}{2}\tr(c_1c_2c_3)\tr(c_1c_3c_4))\\
&=\,&\frac{1}{2} t_{\{2,4\}} t_{\{1,3\}}^2-\frac{1}{2} t_{\{1\}}
 t_{\{2\}} t_{\{3\}} t_{\{4\}} t_{\{1,3\}}+\frac{1}{2} t_{\{3\}}
 t_{\{4\}} t_{\{1,2\}} t_{\{1,3\}}\\&+&\frac{1}{2} t_{\{2\}} t_{\{3\}}
 t_{\{1,4\}} t_{\{1,3\}}+\frac{1}{2} t_{\{1\}} t_{\{4\}}
 t_{\{2,3\}} t_{\{1,3\}}-\frac{1}{2} t_{\{1,4\}} t_{\{2,3\}}
 t_{\{1,3\}}\\&+&\frac{1}{2} t_{\{1\}} t_{\{2\}} t_{\{3,4\}}
 t_{\{1,3\}}-\frac{1}{2} t_{\{1,2\}} t_{\{3,4\}}
 t_{\{1,3\}}-\frac{1}{2} t_{\{4\}} t_{\{1,2,3\}}
 t_{\{1,3\}}\\&-&\frac{1}{2} t_{\{3\}} t_{\{1,2,4\}}
 t_{\{1,3\}}-\frac{1}{2} t_{\{2\}} t_{\{1,3,4\}}
 t_{\{1,3\}}-\frac{1}{2} t_{\{1\}} t_{\{2,3,4\}} t_{\{1,3\}}-2
 t_{\{2,4\}}\\&-&2 t_{\{2,3\}} t_{\{3,4\}}+t_{\{1,2,3\}}
 t_{\{1,3,4\}}+2 t_{\{3\}} t_{\{2,3,4\}}+t_{\{2\}} t_{\{4\}}.
\end{eqnarray*}

Similarly, we have:

\begin{eqnarray*}
\{t_{\{1,2,3\}},\,t_{\{1,2,4\}}\}&=&(+1)(\tr(c_1c_2c_3c_1c_2c_4)-\frac{1}{2}\tr(c_1c_2c_3)\tr(c_1c_2c_4))\\&+&(-1)(\tr(c_1c_2c_3c_4c_1c_2)-\frac{1}{2}\tr(c_1c_2c_3)\tr(c_1c_2c_4))\\
&=\,&\frac{1}{2} t_{\{3\}} t_{\{4\}} t_{\{1,2\}}^2-\frac{1}{2}
 t_{\{3,4\}} t_{\{1,2\}}^2-\frac{1}{2} t_{\{1\}} t_{\{2\}}
 t_{\{3\}} t_{\{4\}} t_{\{1,2\}}\\&+&\frac{1}{2} t_{\{2\}} t_{\{3\}}
 t_{\{1,4\}} t_{\{1,2\}}+\frac{1}{2} t_{\{1\}} t_{\{4\}}
 t_{\{2,3\}} t_{\{1,2\}}-\frac{1}{2} t_{\{1,4\}} t_{\{2,3\}}
 t_{\{1,2\}}\\&+&\frac{1}{2} t_{\{1,3\}} t_{\{2,4\}}
 t_{\{1,2\}}+\frac{1}{2} t_{\{1\}} t_{\{2\}} t_{\{3,4\}}
 t_{\{1,2\}}-\frac{1}{2} t_{\{4\}} t_{\{1,2,3\}}
 t_{\{1,2\}}\\&-&\frac{1}{2} t_{\{3\}} t_{\{1,2,4\}}
 t_{\{1,2\}}-\frac{1}{2} t_{\{2\}} t_{\{1,3,4\}}
 t_{\{1,2\}}-\frac{1}{2} t_{\{1\}} t_{\{2,3,4\}} t_{\{1,2\}}+2
 t_{\{3,4\}}\\&+&t_{\{1,2,3\}} t_{\{1,2,4\}}-t_{\{3\}} t_{\{4\}},
\end{eqnarray*}
and

\begin{eqnarray*}
\{t_{\{1,2,3\}},\,t_{\{2,3,4\}}\}&=&(+1)(\tr(c_1c_2c_3c_2c_3c_4)-\frac{1}{2}\tr(c_1c_2c_3)\tr(c_2c_3c_4))\\&+&(-1)(\tr(c_1c_2c_3c_4c_2c_3)-\frac{1}{2}\tr(c_1c_2c_3)\tr(c_2c_3c_4))\\
&=\,&-\frac{1}{2} t_{\{1\}} t_{\{4\}} t_{\{2,3\}}^2+\frac{1}{2}
 t_{\{1,4\}} t_{\{2,3\}}^2+\frac{1}{2} t_{\{1\}} t_{\{2\}}
 t_{\{3\}} t_{\{4\}} t_{\{2,3\}}\\&-&\frac{1}{2} t_{\{3\}} t_{\{4\}}
 t_{\{1,2\}} t_{\{2,3\}}-\frac{1}{2} t_{\{2\}} t_{\{3\}}
 t_{\{1,4\}} t_{\{2,3\}}\\&-&\frac{1}{2} t_{\{1,3\}} t_{\{2,4\}}
 t_{\{2,3\}}-\frac{1}{2} t_{\{1\}} t_{\{2\}} t_{\{3,4\}}
 t_{\{2,3\}}\\&+&\frac{1}{2} t_{\{1,2\}} t_{\{3,4\}}
 t_{\{2,3\}}+\frac{1}{2} t_{\{4\}} t_{\{1,2,3\}}
 t_{\{2,3\}}+\frac{1}{2} t_{\{3\}} t_{\{1,2,4\}}
 t_{\{2,3\}}\\&+&\frac{1}{2} t_{\{2\}} t_{\{1,3,4\}}
 t_{\{2,3\}}+\frac{1}{2} t_{\{1\}} t_{\{2,3,4\}} t_{\{2,3\}}-2
 t_{\{1,4\}}-t_{\{1,2,3\}} t_{\{2,3,4\}}\\&+&t_{\{1\}} t_{\{4\}}.
\end{eqnarray*}

Next, and very similarly, we have Figure \ref{124134}.

\begin{figure}[ht!]
\includegraphics[scale=0.15]{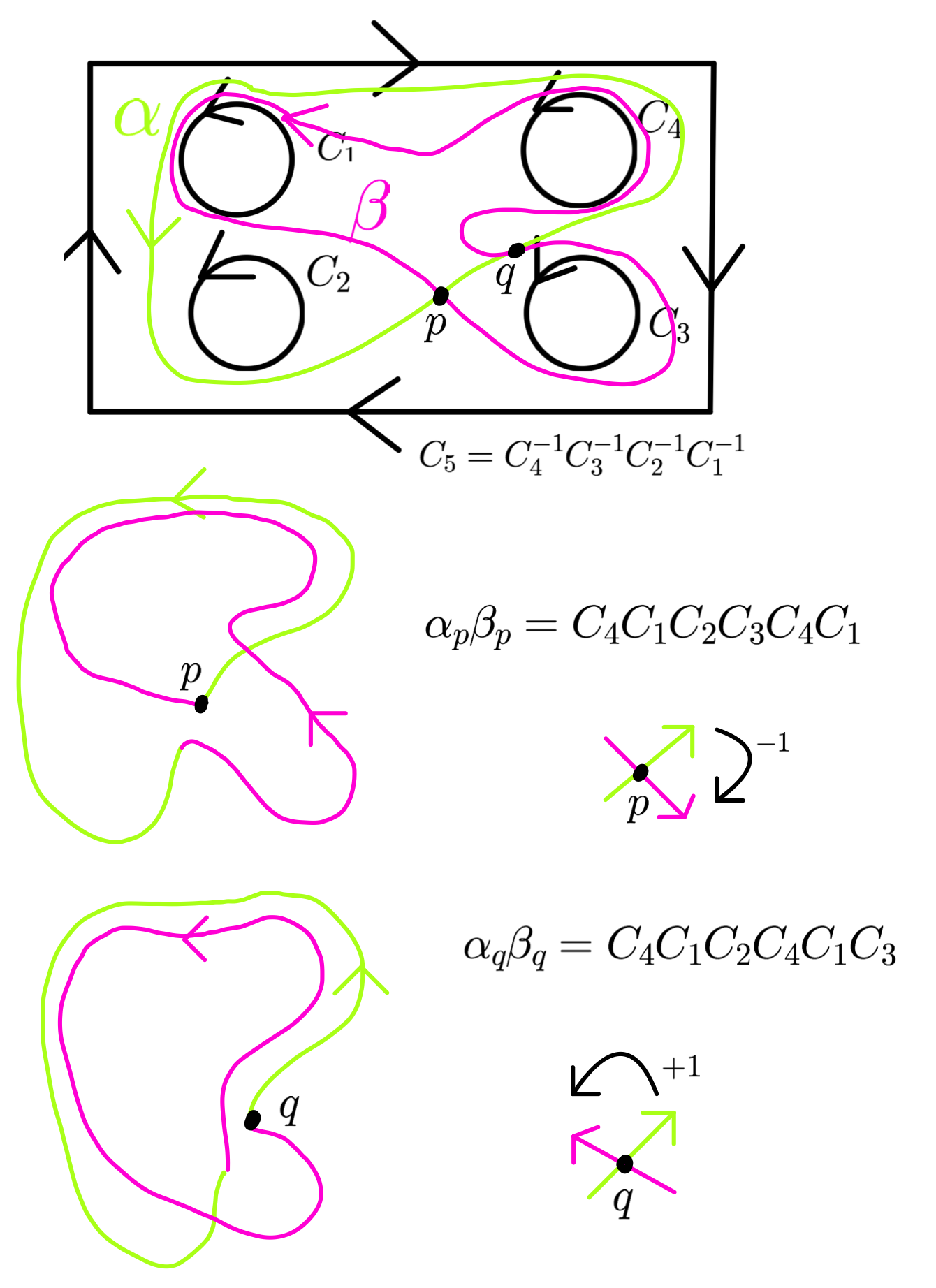} \caption{$\{t_{\{1,2,4\}},t_{\{1,3,4\}}\}$}
\label{124134} 
\end{figure}

\begin{eqnarray*}
\{t_{\{1,2,4\}},\,t_{\{1,3,4\}}\}&=&(-1)(\tr(c_4c_1c_2c_3c_4c_1)-\frac{1}{2}\tr(c_1c_2c_4)\tr(c_1c_3c_4))\\&+&(+1)(\tr(c_4c_1c_2c_4c_1c_3)-\frac{1}{2}\tr(c_1c_2c_4)\tr(c_1c_3c_4))\\
&=\,&\frac{1}{2} t_{\{2\}} t_{\{3\}} t_{\{1,4\}}^2-\frac{1}{2}
 t_{\{2,3\}} t_{\{1,4\}}^2-\frac{1}{2} t_{\{1\}} t_{\{2\}}
 t_{\{3\}} t_{\{4\}} t_{\{1,4\}}\\&+&\frac{1}{2} t_{\{3\}} t_{\{4\}}
 t_{\{1,2\}} t_{\{1,4\}}+\frac{1}{2} t_{\{1\}} t_{\{4\}}
 t_{\{2,3\}} t_{\{1,4\}}+\frac{1}{2} t_{\{1,3\}} t_{\{2,4\}}
 t_{\{1,4\}}\\&+&\frac{1}{2} t_{\{1\}} t_{\{2\}} t_{\{3,4\}}
 t_{\{1,4\}}-\frac{1}{2} t_{\{1,2\}} t_{\{3,4\}}
 t_{\{1,4\}}-\frac{1}{2} t_{\{4\}} t_{\{1,2,3\}}
 t_{\{1,4\}}\\&-&\frac{1}{2} t_{\{3\}} t_{\{1,2,4\}}
 t_{\{1,4\}}-\frac{1}{2} t_{\{2\}} t_{\{1,3,4\}}
 t_{\{1,4\}}-\frac{1}{2} t_{\{1\}} t_{\{2,3,4\}} t_{\{1,4\}}+2
 t_{\{2,3\}}\\&+&t_{\{1,2,4\}} t_{\{1,3,4\}}-t_{\{2\}} t_{\{3\}}.
\end{eqnarray*}

Similarly,
\begin{eqnarray*}
\{t_{\{1,2,4\}},\,t_{\{2,3,4\}}\}&=&(-1)(\tr(c_4c_1c_2c_3c_4c_2)-\frac{1}{2}\tr(c_1c_2c_4)\tr(c_2c_3c_4))\\&+&(+1)(\tr(c_4c_1c_2c_4c_2c_3)-\frac{1}{2}\tr(c_1c_2c_4)\tr(c_2c_3c_4))\\
&=\,&\frac{1}{2} t_{\{1,3\}} t_{\{2,4\}}^2-\frac{1}{2} t_{\{1\}}
 t_{\{2\}} t_{\{3\}} t_{\{4\}} t_{\{2,4\}}+\frac{1}{2} t_{\{3\}}
 t_{\{4\}} t_{\{1,2\}} t_{\{2,4\}}\\&+&\frac{1}{2} t_{\{2\}} t_{\{3\}}
 t_{\{1,4\}} t_{\{2,4\}}+\frac{1}{2} t_{\{1\}} t_{\{4\}}
 t_{\{2,3\}} t_{\{2,4\}}-\frac{1}{2} t_{\{1,4\}} t_{\{2,3\}}
 t_{\{2,4\}}\\&+&\frac{1}{2} t_{\{1\}} t_{\{2\}} t_{\{3,4\}}
 t_{\{2,4\}}-\frac{1}{2} t_{\{1,2\}} t_{\{3,4\}}
 t_{\{2,4\}}-\frac{1}{2} t_{\{4\}} t_{\{1,2,3\}}
 t_{\{2,4\}}\\&-&\frac{1}{2} t_{\{3\}} t_{\{1,2,4\}}
 t_{\{2,4\}}-\frac{1}{2} t_{\{2\}} t_{\{1,3,4\}}
 t_{\{2,4\}}-\frac{1}{2} t_{\{1\}} t_{\{2,3,4\}} t_{\{2,4\}}\\&-&2
 t_{\{1,3\}}-2 t_{\{1,2\}} t_{\{2,3\}}+2 t_{\{2\}}
 t_{\{1,2,3\}}-\frac{1}{2} t_{\{1,2,3\}} t_{\{2,3,4\}}\\&+&\frac{3}{2}
 t_{\{1,2,4\}} t_{\{2,3,4\}}+t_{\{1\}} t_{\{3\}}.
\end{eqnarray*}

Lastly, we consider Figure \ref{134234}.

\begin{figure}[ht!]
\includegraphics[scale=0.15]{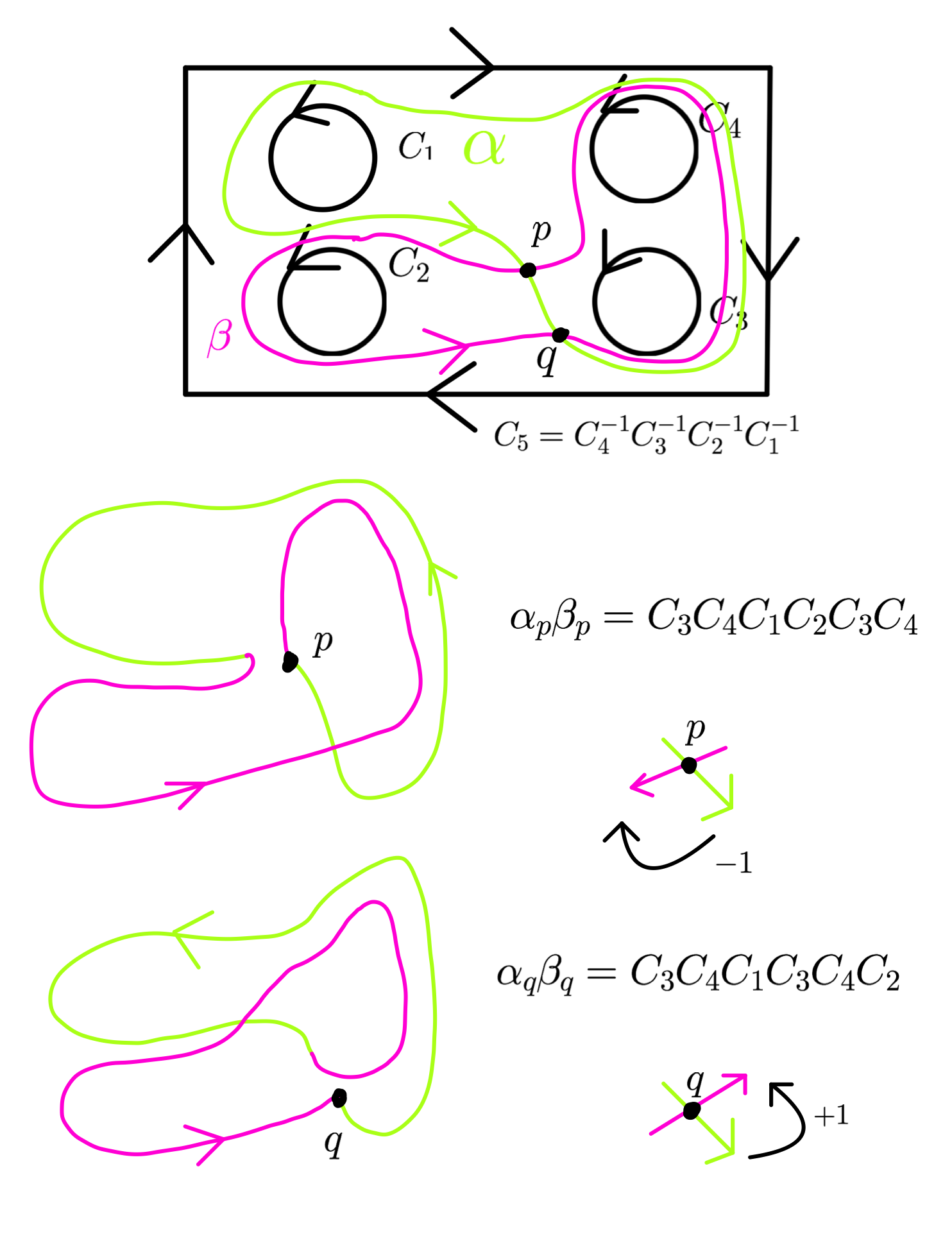} \caption{$\{t_{\{1,3,4\}},t_{\{2,3,4\}}\}$}
\label{134234} 
\end{figure}

\begin{eqnarray*}
\{t_{\{1,3,4\}},\,t_{\{2,3,4\}}\}&=&(-1)(\tr(c_3c_4c_1c_2c_3c_4)-\frac{1}{2}\tr(c_1c_3c_4)\tr(c_2c_3c_4))\\&+&(+1)(\tr(c_3c_4c_1c_3c_4c_2)-\frac{1}{2}\tr(c_1c_3c_4)\tr(c_2c_3c_4))\\
&=\,&\frac{1}{2} t_{\{1\}} t_{\{2\}} t_{\{3,4\}}^2-\frac{1}{2}
 t_{\{1,2\}} t_{\{3,4\}}^2-\frac{1}{2} t_{\{1\}} t_{\{2\}}
 t_{\{3\}} t_{\{4\}} t_{\{3,4\}}\\&+&\frac{1}{2} t_{\{3\}} t_{\{4\}}
 t_{\{1,2\}} t_{\{3,4\}}+\frac{1}{2} t_{\{2\}} t_{\{3\}}
 t_{\{1,4\}} t_{\{3,4\}}+\frac{1}{2} t_{\{1\}} t_{\{4\}}
 t_{\{2,3\}} t_{\{3,4\}}\\&-&\frac{1}{2} t_{\{1,4\}} t_{\{2,3\}}
 t_{\{3,4\}}+\frac{1}{2} t_{\{1,3\}} t_{\{2,4\}}
 t_{\{3,4\}}-\frac{1}{2} t_{\{4\}} t_{\{1,2,3\}}
 t_{\{3,4\}}\\&-&\frac{1}{2} t_{\{3\}} t_{\{1,2,4\}}
 t_{\{3,4\}}-\frac{1}{2} t_{\{2\}} t_{\{1,3,4\}}
 t_{\{3,4\}}-\frac{1}{2} t_{\{1\}} t_{\{2,3,4\}} t_{\{3,4\}}+2
 t_{\{1,2\}}\\&+&t_{\{1,3,4\}} t_{\{2,3,4\}}-t_{\{1\}} t_{\{2\}}.
\end{eqnarray*}

\subsubsection{The Bi-vector and Symmetry}

Let $\mu_{\sigma}$ be the mapping class that corresponds to the permutation $\sigma$ of the boundary components of 
$\Sigma_{5,0}$; we will use cycle notation for permutations. We consider formal sums of such permutations in the 
integral group ring associated to the mapping class group of $\Sigma_{5,0}$, and observe that elements in the group 
ring acts on the coordinate ring of $\X_{5,0}(\SL(2,\C))$ since the mapping class group acts on 
$\X_{5,0}(\SL(2,\C))$.

Let $\Sigma_1\,=\,\mu_{(1)}+\mu_{(34)}+\mu_{(23)}+\mu_{(123)}+\mu_{(124)}+\mu_{(142)}+\mu_{(143)}+ 
\mu_{(13)(24)}+\mu_{(1234)}+\mu_{(1342)}+\mu_{(1324)}+\mu_{(1432)}$, 
$\Sigma_2\,=\,\mu_{(1)}+\mu_{(1234)}+\mu_{(13)(24)}+\mu_{(1432)}$ and $\Sigma_3\,=\,\mu_{(1)}+\mu_{(1432)}$.

The above calculations, after observing symmetry (verified using {\it Mathematica}), establish the following form 
for the bi-vector:
\begin{eqnarray*}&&\mathfrak{a}_{5,0}(\SL(2,\C))\,=\,\mathfrak{a}_{1324}\frac{\partial}{\partial t_{\{1,3\}}}\wedge
\frac{\partial}{\partial t_{\{2,4\}}}+\Sigma_1\left(\mathfrak{a}_{1214}\frac{\partial}{\partial t_{\{1,2\}}}\wedge
\frac{\partial}{\partial t_{\{1,4\}}}\right)\\&+&\Sigma_2\left(\mathfrak{a}_{12314}
\frac{\partial}{\partial t_{\{1,2,3\}}}\wedge \frac{\partial}{\partial t_{\{1,4\}}}+\mathfrak{a}_{12324}
\frac{\partial}{\partial t_{\{1,2,3\}}}\wedge \frac{\partial}{\partial t_{\{2,4\}}}+\mathfrak{a}_{12334}
\frac{\partial}{\partial t_{\{1,2,3\}}}\wedge \frac{\partial}{\partial t_{\{3,4\}}}\right)\\&+
&\Sigma_2\left(\mathfrak{a}_{123124}\frac{\partial}{\partial t_{\{1,2,3\}}}\wedge
\frac{\partial}{\partial t_{\{1,2,4\}}}\right)+\Sigma_3\left(\mathfrak{a}_{123134}
\frac{\partial}{\partial t_{\{1,2,3\}}}\wedge \frac{\partial}{\partial t_{\{1,3,4\}}}\right),
\end{eqnarray*} 
where:
\begin{eqnarray*} 
\mathfrak{a}_{1324}&=&t_{\{3\}} t_{\{4\}} t_{\{1,2\}}-2 t_{\{3,4\}} t_{\{1,2\}}-t_{\{2\}}t_{\{3\}} t_{\{1,4\}}-t_{\{1\}} t_{\{4\}} t_{\{2,3\}}+2t_{\{1,4\}} t_{\{2,3\}}\\&+&t_{\{1\}} t_{\{2\}} t_{\{3,4\}},\\ 
\mathfrak{a}_{1214}&=&t_{\{1,2\}} t_{\{1,4\}}+2 t_{\{2,4\}}-t_{\{1\}}t_{\{1,2,4\}}-t_{\{2\}} t_{\{4\}},\\
\mathfrak{a}_{12314}&=&\frac{1}{2} t_{\{4\}} t_{\{1\}}^2 t_{\{2,3\}}+\frac{1}{2} t_{\{2\}}t_{\{1\}}^2 t_{\{3,4\}}-\frac{1}{2} t_{\{1\}}^2
t_{\{2,3,4\}}+\frac{1}{2} t_{\{3\}} t_{\{4\}} t_{\{1\}}t_{\{1,2\}}\\&+&\frac{1}{2} t_{\{2\}} t_{\{3\}} t_{\{1\}}t_{\{1,4\}}-\frac{1}{2} t_{\{1\}} t_{\{1,4\}}t_{\{2,3\}}+\frac{1}{2} t_{\{1\}} t_{\{1,3\}}t_{\{2,4\}}-\frac{1}{2} t_{\{1\}} t_{\{1,2\}}t_{\{3,4\}}\\&-&\frac{1}{2} t_{\{4\}} t_{\{1\}}t_{\{1,2,3\}}-\frac{1}{2} t_{\{3\}} t_{\{1\}}t_{\{1,2,4\}}-\frac{1}{2} t_{\{2\}} t_{\{1\}}t_{\{1,3,4\}}-t_{\{4\}} t_{\{2,3\}}+t_{\{1,4\}} t_{\{1,2,3\}}\\&+&2t_{\{2,3,4\}}-\frac{1}{2} t_{\{2\}} t_{\{3\}} t_{\{4\}}t_{\{1\}}^2,\\
\mathfrak{a}_{12324}&=& -t_{\{3\}} t_{\{1,4\}}+t_{\{1\}} t_{\{3,4\}}+t_{\{2,3\}}t_{\{1,2,4\}}-t_{\{1,2\}} t_{\{2,3,4\}},\\
\mathfrak{a}_{12334}&=& -\frac{1}{2} t_{\{4\}} t_{\{3\}}^2 t_{\{1,2\}}-\frac{1}{2} t_{\{2\}}t_{\{3\}}^2 t_{\{1,4\}}+\frac{1}{2} t_{\{3\}}^2t_{\{1,2,4\}}-\frac{1}{2} t_{\{1\}} t_{\{4\}} t_{\{3\}}t_{\{2,3\}}\\&+&\frac{1}{2} t_{\{3\}} t_{\{1,4\}}t_{\{2,3\}}-\frac{1}{2} t_{\{3\}} t_{\{1,3\}}t_{\{2,4\}}-\frac{1}{2} t_{\{1\}} t_{\{2\}} t_{\{3\}}t_{\{3,4\}}+\frac{1}{2} t_{\{3\}} t_{\{1,2\}}t_{\{3,4\}}\\&+&\frac{1}{2} t_{\{4\}} t_{\{3\}}t_{\{1,2,3\}}+\frac{1}{2} t_{\{2\}} t_{\{3\}}t_{\{1,3,4\}}+\frac{1}{2} t_{\{1\}} t_{\{3\}}t_{\{2,3,4\}}+t_{\{4\}} t_{\{1,2\}}-t_{\{3,4\}} t_{\{1,2,3\}}\\&-&2t_{\{1,2,4\}}+\frac{1}{2} t_{\{1\}} t_{\{2\}} t_{\{4\}}t_{\{3\}}^2,\\
\mathfrak{a}_{123124}&=& \frac{1}{2} t_{\{3\}} t_{\{4\}} t_{\{1,2\}}^2-\frac{1}{2}t_{\{3,4\}} t_{\{1,2\}}^2-\frac{1}{2} t_{\{1\}} t_{\{2\}}t_{\{3\}} t_{\{4\}} t_{\{1,2\}}+\frac{1}{2} t_{\{2\}} t_{\{3\}}t_{\{1,4\}} t_{\{1,2\}}\\&+&\frac{1}{2} t_{\{1\}} t_{\{4\}}t_{\{2,3\}} t_{\{1,2\}}-\frac{1}{2} t_{\{1,4\}} t_{\{2,3\}}t_{\{1,2\}}+\frac{1}{2} t_{\{1,3\}} t_{\{2,4\}}t_{\{1,2\}}+\frac{1}{2} t_{\{1\}} t_{\{2\}} t_{\{3,4\}}t_{\{1,2\}}\\&-&\frac{1}{2} t_{\{4\}} t_{\{1,2,3\}}t_{\{1,2\}}-\frac{1}{2} t_{\{3\}} t_{\{1,2,4\}}t_{\{1,2\}}-\frac{1}{2} t_{\{2\}} t_{\{1,3,4\}}t_{\{1,2\}}-\frac{1}{2} t_{\{1\}} t_{\{2,3,4\}} t_{\{1,2\}}\\&+&2t_{\{3,4\}}+t_{\{1,2,3\}} t_{\{1,2,4\}}-t_{\{3\}} t_{\{4\}},\\
\mathfrak{a}_{123134}&=&\frac{1}{2} t_{\{2,4\}} t_{\{1,3\}}^2-\frac{1}{2} t_{\{1\}}t_{\{2\}} t_{\{3\}} t_{\{4\}} t_{\{1,3\}}+\frac{1}{2} t_{\{3\}}
t_{\{4\}} t_{\{1,2\}} t_{\{1,3\}}+\frac{1}{2} t_{\{2\}} t_{\{3\}}t_{\{1,4\}} t_{\{1,3\}}\\&+&\frac{1}{2} t_{\{1\}} t_{\{4\}}t_{\{2,3\}} t_{\{1,3\}}-\frac{1}{2} t_{\{1,4\}} t_{\{2,3\}}t_{\{1,3\}}+\frac{1}{2} t_{\{1\}} t_{\{2\}} t_{\{3,4\}}t_{\{1,3\}}-\frac{1}{2} t_{\{1,2\}} t_{\{3,4\}}t_{\{1,3\}}\\&-&\frac{1}{2} t_{\{4\}} t_{\{1,2,3\}}t_{\{1,3\}}-\frac{1}{2} t_{\{3\}} t_{\{1,2,4\}}t_{\{1,3\}}-\frac{1}{2} t_{\{2\}} t_{\{1,3,4\}}t_{\{1,3\}}-\frac{1}{2} t_{\{1\}} t_{\{2,3,4\}} t_{\{1,3\}}\\&-&2t_{\{2,4\}}-2 t_{\{2,3\}} t_{\{3,4\}}+t_{\{1,2,3\}}t_{\{1,3,4\}}+2 t_{\{3\}} t_{\{2,3,4\}}+t_{\{2\}} t_{\{4\}} .
\end{eqnarray*}

Moreover, using all boundary permutations in the mapping class group (including the permutations of the fifth boundary $C_5$), we show there do not exist any further symmetries of this type. In other words, the above symmetry is sharp. We used a {\it 
Mathematica} notebook for a proof by exhaustion (we checked all 120 induced mappings explicitly).

\bibstyle{alpha}

\end{document}